\newcommand{\tO}{\tilde{O}}
\newcommand{\by}{\mathbf{y}}
\newcommand{\Mellin}{\mathfrak{M}}
\newcommand{\hmu}{\hat{\mu}}
\newcommand{\hy}{\hat{\mathbf{y}}}
\newcommand{\mo}{\omega}
\newcommand{\moy}{\mathbf{y}}
\newcommand{\thirdT}{\alpha}
\newcommand{\fracInt}{\lambda}
\newcommand{\isoperi}{\psi}
\newcommand{\area}{\mathcal{A}}
\newcommand{\transition}{\mathcal{T}}
\newcommand{\tmix}{\mathfrak{t}_{\rm{mix}}}
\newcommand{\tvdis}{{\rm{d}}_{{\rm{TV}}}}
\newcommand{\sphere}{\ensuremath{\mathbb{S}^{n}}}
\newcommand{\dirichlet}{\mathcal{E}}
\newcommand{\bb}{\mathbf{b}}
\newcommand{\defn}{:=}
\DeclarePairedDelimiterX{\infdivx}[2]{(}{)}{%
  #1\;\delimsize\|\;#2%
}
\def\qed{\hfill $\vcenter{\hrule height .3mm
\hbox {\vrule width .3mm height 2.1mm \kern 2mm \vrule width .3mm
height 2.1mm} \hrule height .3mm}$ \bigskip}
\def \RR {\mathbb R}
\def \EE {\mathbb E}
\def \Var {\mathrm{Var}}
\def \PP {\mathbb P}
\newtheorem{theorem}{Theorem}
\newtheorem{lemma}[theorem]{Lemma}
\theoremstyle{definition}
\newtheorem{definition}[theorem]{Definition}
\theoremstyle{remark}
\long\def\symbolfootnotetext[#1]#2{\begingroup
\def\thefootnote{\fnsymbol{footnote}}\footnotetext[#1]{#2}\endgroup}
\DeclareMathOperator{\Cov}{Cov}
\DeclareMathOperator{\vol}{vol}
\newcommand{\real}{\ensuremath{\mathbb{R}}}
\newcommand{\naturalnum}{\ensuremath{\mathbb{N}}}
\newcommand{\Ind}{\ensuremath{\mathbb{I}}}
\newcommand{\borel}{\ensuremath{\mathfrak{B}}}
\newcommand{\ball}{\ensuremath{\mathbb{B}^n}}
\newcommand{\Exs}{\ensuremath{{\mathbb{E}}}}
\newcommand{\Prob}{\ensuremath{{\mathbb{P}}}}
\newcommand{\Normal}{\ensuremath{\mathcal{N}}}
\newcommand{\brackets}[1]{\left[ #1 \right]}
\newcommand{\parenth}[1]{\left( #1 \right)}
\newcommand{\braces}[1]{\left\{ #1 \right \}}
\newcommand{\abss}[1]{\left| #1 \right |}
\newcommand{\angles}[1]{\left\langle #1 \right \rangle}
\newcommand{\tp}{^\top}
\newcommand{\vecnorm}[2]{\left\| #1\right\|_{#2}}
\begin{document}

\title{Hit-and-run mixing via localization schemes}

\author[1]{Yuansi Chen}
\author[2]{Ronen Eldan}
\affil[1]{Duke University}
\affil[2]{Microsoft Research, Redmond}
\date{}
\maketitle

\begin{abstract}
  We analyze the hit-and-run algorithm for sampling uniformly from an isotropic convex body $K$ in $n$ dimensions. We show that the algorithm mixes in time $\tO(n^2/ \psi_n^2)$, where $\psi_n$ is the smallest isoperimetric constant for any isotropic logconcave distribution, also known as the Kannan-Lovasz-Simonovits (KLS) constant~\cite{kannan1995isoperimetric}. Our bound improves upon previous bounds of the form $\tO(n^2 R^2/r^2)$, which depend on the ratio $R/r$ of the radii of the circumscribed and inscribed balls of $K$, gaining a factor of $n$ in the case of isotropic convex bodies. Consequently, our result gives a mixing time estimate for the hit-and-run which matches the state-of-the-art bounds for the ball walk. Our main proof technique is based on an annealing of localization schemes introduced in Chen and Eldan~\cite{chen2022localization}, which allows us to reduce the problem to the analysis of the mixing time on truncated Gaussian distributions.
\end{abstract}

\section{Introduction}
Sampling from a high dimensional distribution is a fundamental computational problem in many fields, such as Bayesian statistics, machine learning, statistical physics, and others involving stochastic models. A particularly important class of high dimensional distributions consists of uniform distributions over convex bodies. For example, the problem of sampling uniformly from a convex body is closely related to that of efficiently computing its volume, which is a fundamental problem in computer science and has been extensively studied in the last three decades (see ~\cite{dyer1991random,lovasz1990mixing,applegate1991sampling,lovasz1993random}, the survey~\cite{vempala2005geometric} and the thesis~\cite{cousins2017efficient}). Besides the volume computation, uniform sampling from a convex body can be seen as a special case of sampling from truncated Gaussian distributions, which arise naturally in Bayesian statistical models involving probit regression and censored data~\cite{albert1993bayesian,held2006bayesian}.

The \textit{hit-and-run} algorithm is a widely-used Markov-chain-based sampling method. It was introduced by Smith in 1984~\cite{smith1984efficient} and is closely related to the popular Gibbs sampler~\cite{turchin1971computation}. In the case of uniform sampling from a convex body, the hit-and-run algorithm works iteratively as follows. At each step, it starts from a point $x$ inside the convex body, chooses a uniformly distributed random direction, and then samples a point uniformly from the line segment formed by intersection of the convex body with the line segment passing through $x$ in the randomly chosen direction. This process is repeated until the chain is well-mixed.

The hit-and-run algorithm has been shown to mix rapidly for any log-concave distribution~\cite{lovasz2006fast}. In particular, the works \cite{lovasz1999hit,lovasz2006hit} show that the hit-and-run algorithm mixes in $\tO(n^2 R^2/r^2)$ from a warm start or any interior point, where $R$ and $r$ are the radii of the circumscribed and inscribed balls of $K$. In the case of \emph{isotropic} convex bodies, it was an open question whether the mixing time of the hit-and-run has to depend on the ratio $R/r$, which is typically of the order $\sqrt{n}$ (see~\cite{lee2018kannan}). By isotropic, we mean that the uniform distribution over the convex body has mean $0$ and covariance $\Ind_n$. Consequently, it was unknown whether the hit-and-run algorithm mixes in $\tO(n^2)$ steps for any isotropic convex body. The main contribution of this paper is to provide a positive answer to this question:

\begin{theorem}[main theorem, informal]
Let $\mu$ be the uniform distribution on an $n$-dimensional isotropic convex set $K$. Let $\nu$ be a measure obtained by running $t$ steps of the hit-and-run chain starting from any measure whose density with respect to $\mu$ is at most $M$. Then the total variation distance between $\mu$ and $\nu$ is at most $\epsilon$ under the condition
  \begin{align*}
    t \geq  n^2 \parenth{\frac{M \log(n) }{\epsilon}}^{O(1)}.
  \end{align*}    
\end{theorem}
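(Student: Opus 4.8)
The plan is to use the annealing-of-localization-schemes framework from \cite{chen2022localization} to interpolate between the uniform distribution $\mu$ on $K$ and a well-conditioned target, so that the hit-and-run chain can be analyzed on each ``slice'' of the interpolation rather than directly on $K$. Concretely, I would consider the family of measures $\mu_t$ obtained by tilting $\mu$ by a Gaussian factor $\exp(-\tfrac{t}{2}\|x\|^2 + \langle c_t, x\rangle)$, i.e.\ the Gaussian-localization process, with $t$ running from $0$ up to a value $T = \Theta(1)$ (or $\Theta(1/\mathrm{polylog})$) large enough that $\mu_T$ is a truncated Gaussian with covariance $O(1/T)\cdot \Id_n$, hence essentially as well-conditioned as a product measure. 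The key structural facts I would invoke are: (i) along this process the measures remain logconcave and, crucially, their isoperimetric/Poincaré constants are controlled in terms of the KLS constant $\psi_n$ of isotropic logconcave measures, since each $\mu_t$ is (after affine normalization) isotropic-logconcave up to the Gaussian tilt; and (ii) for a measure that is a Gaussian truncated to a convex body with covariance scale $\sigma^2$, one can directly bound the hit-and-run conductance/mixing time — this is the ``base case'' reduction advertised in the abstract.

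The main steps, in order, would be: \textbf{(1)} Set up the Gaussian localization scheme and record that $\mu_0 = \mu$ while $\mu_T$ is a truncated Gaussian; quantify how the covariance shrinks and how the ``warmness'' parameter $M$ evolves (it can only deteriorate by a controlled polynomial amount along the anneal). \textbf{(2)} Prove the base-case estimate: the hit-and-run chain on a truncated Gaussian with covariance $\asymp \sigma^2 \Id_n$ mixes in $\tO(n^2/\psi_n^2)$ steps from an $M$-warm start, by bounding its isoperimetric profile (via a localization/needle argument reducing to one dimension, where Gaussian densities restricted to an interval are logconcave and have a quantitative isoperimetric inequality) and plugging into the standard conductance-to-mixing-time machinery for hit-and-run of \cite{lovasz2006hit} adapted to the bounded one-step distributions. \textbf{(3)} Transfer this bound back along the localization process: show that a mixing-time (or, more robustly, a spectral-gap / Poincaré-constant) bound for hit-and-run on $\mu_T$ implies a comparable bound for $\mu_0 = \mu$, losing only polylogarithmic and $\mathrm{poly}(M)$ factors. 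This is where the annealing lemma of \cite{chen2022localization} does the heavy lifting: the hit-and-run kernels for nearby $\mu_t$ are comparable, and the total ``distance travelled'' in $t$ is $O(1)$, so one pays only a polylogarithmic number of comparison steps. \textbf{(4)} Assemble: combine the $\tO(n^2/\psi_n^2)$ base bound with the current best bound $\psi_n \geq n^{-o(1)}$ (Klartag, following Chen) to conclude $t \geq n^2 (M\log n/\epsilon)^{O(1)}$ suffices, and convert the spectral/conductance statement into the claimed total-variation bound via the standard warm-start argument.

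The hardest part, I expect, is Step~(3) — controlling hit-and-run along the localization process. Unlike diffusion-type or proximal samplers, the hit-and-run kernel is not a smooth function of the measure, so comparing the Dirichlet forms of $\mu_t$ and $\mu_{t+dt}$ requires care: one needs that the chord distributions along a random line change slowly as the Gaussian tilt turns on, and that the induced one-dimensional restrictions stay uniformly logconcave with bounded ratio of densities. A secondary technical obstacle is Step~(2): the usual hit-and-run conductance proof relies on the body being ``round'' (the $R/r$ dependence), and for a truncated Gaussian one must exploit the Gaussian decay to replace $R$-type quantities by $\sigma\sqrt{n}$-type quantities and the KLS constant, which is exactly the $n$-factor gain claimed in the abstract. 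I would handle both by working at the level of Poincaré constants of the continuous-time (or lazy) version of the chain, where localization-scheme comparison inequalities are cleanest, and only at the very end pass to the discrete mixing-time statement.
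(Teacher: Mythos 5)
Your high-level skeleton (Gaussian localization to reduce to a truncated Gaussian, a base-case conductance bound there, then a transfer back to $\mu$) matches the strategy of the paper, but two of your choices would not work as stated. First, the stopping time: you propose $T=\Theta(1)$ (or smaller), which gives a truncated Gaussian with covariance $O(1)\cdot\Id_n$ --- a measure whose mass is still spread over the whole isotropic body at scale $\sqrt{n}$, so the boundary of $K$ affects a hit-and-run step just as much as for $\mu$ itself and the ``base case'' is no easier than the original problem. The paper takes $T=n$, so that $\mu_T=\nu_{c_T/T,\,T}$ has covariance $\preceq\frac1n\Id_n$ and its mass concentrates in a shell $|x-\beta|\in[1/\sqrt2,\sqrt2]$; only at this scale does the chord through a typical point of $K_r$ contain a full interval of length $\gtrsim r$ around the Gaussian's peak, which is what makes the overlap and conductance estimates go through.

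Second, and more seriously, your Step (3) is not the mechanism that makes the annealing work, and as described it fails. Comparing the Dirichlet forms of $\mu_t$ and $\mu_{t+dt}$ ``kernel by kernel'' over a polylogarithmic number of steps is a Holley--Stroock-type perturbation argument, and the density ratio between $\mu$ and any useful $\mu_T$ is $e^{\Theta(n)}$ or worse (a Gaussian factor over a body of diameter $\sqrt n$), so the accumulated loss is exponential in $n$. The paper instead uses two ingredients you do not mention: (i) the Dirichlet form $t\mapsto\dirichlet_{P_t}(\mathbf 1_S,\mathbf 1_S)$ is a \emph{super-martingale} along the stochastic localization process (Lemma~\ref{lem:dirichlet_form_super_martingale}), which gives $\dirichlet_{P_0}\geq\EE[\dirichlet_{P_T}]$ with no loss at all; and (ii) \emph{approximate conservation of variance} (Lemma~\ref{lem:approximate_conserv_variance_overall}): with probability at least $\mathrm{poly}(\xi)/\sqrt{n\log(n)\kappa_n^2}$ the quantities $\mu_T(S)$ and $\mu_T(K_r)$ have not collapsed to $0$ or $1$, so the conductance of $\mu_T$ is still measured against a non-degenerate partition. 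Item (ii) is the technical heart of the paper (a three-stage re-parametrization of the localization process, the Klartag--Lehec bound on $\|A_t\|_{\mathrm{op}}$, and a one-dimensional analysis of the last stage), and it is also where the KLS constant $\psi_n$ actually enters the final bound --- not in the base case, where only $n$-strong log-concavity of the truncated Gaussian (Gaussian isoperimetry on the convex set $K_r$) is used. Without conservation of variance your plan has no way to rule out that the localized measure puts essentially all its mass on one side of the partition, in which case the inherited conductance bound is vacuous.
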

A formal statement of this result can be found in Theorem~\ref{thm:hitandrun_mixing_isotropic_constant_warm} at the end of this section. Our result makes two main contributions to the existing literature on sampling from convex sets, as described next.

First, our result effectively matches the known mixing bound for the hit-and-run walk with the state-of-the-art bound for the \emph{ball walk}, another well-studied sampling algorithm. Given a convex set $K$ from which we want to produce samples, the ball walk is the Markov chain whose step is defined as follows: Given a starting point $x$, it chooses a point $y$ uniformly in a ball of fixed radius around $x$ and moves to $y$ if $y$ is in $K$; otherwise, it rejects the move and stays at $x$. It was shown in~\cite{kannan1997random} that the ball walk mixes in $\tO(n^2 R^2/r^2)$ from a warm start, where $R$ and $r$ are the radii of the circumscribed and inscribed balls of $K$. In addition to the result which depends on the ratio $R/r$, the results in~\cite{kannan1997random} also imply that the ball walk mixes in $\tO(n^2/\isoperi_n^2)$ steps for an isotropic convex body, where $\psi_n$ is the Kannan-Lovasz-Simonovits (KLS) constant~\cite{kannan1995isoperimetric}. Using the  best known bound for $\psi_n$ ~\cite{chen2021almost,klartag2022bourgain,jambulapati2022slightly} which is of order $\log^{-5} n$, the mixing time of the ball walk becomes $\tO(n^2)$. Thus, compared to the ball walk, our result gives a matching bound in terms of the dimension $n$ dependency for sampling an isotropic convex body using the hit-and-run. To the best of our knowledge, the ball walk is currently the Markov chain with the best known mixing rate for sampling a general isotropic convex body.

Second, our result is the first application of the \emph{annealing with localization schemes} technique, which was put forth in ~\cite{chen2022localization}, towards sampling from continuous distributions. The main proof strategy is to use the stochastic localization process~\cite{eldan2013thin} in order to reduce the original mixing time analysis to that of sampling from a \emph{truncated Gaussian distribution}, which is known to be well-behaved in many cases. While the high-level of the proof follows this technique, the adaptation of the technique to the hit-and-run chain seems to require substantial additional work, resulting in a framework which we hope would become relevant for other chains as well. 

Furthermore, in regard to the comparison with the ball walk, we would like to highlight that unlike the ball walk which depends on a parameter indicating the jump size (which corresponds to the radius of the ball), the hit-and-run chain is more canonical in the sense that it does not depend on a ``scale'' parameter. Given a membership oracle for an isotropic convex body, it is often not clear what the optimal choice of jump size should be, with no extra information about the convex body one needs to assume the worst case (taking a jump size of $n^{-1/2}$). The hit-and-run chain, by construction, chooses the (in a sense) optimal jump size for each point. Therefore, in light of our bound, while for the worst-case example of an isotropic convex body the two algorithms have comparable performance, there are cases of convex bodies for which the hit-and-run algorithm is strictly faster than the ball-walk (this is essentially true whenever the inscribed radius is much bigger than order $1$). In other words our bound effectively establishes that the hit-and-run walk is always at least as good as the ball walk for sampling for isotropic convex bodies, but in some cases it's actually strictly better. 

\paragraph{Remark.} It is known that the hit-and-run also mixes rapidly from a \emph{cold start}, namely it can be started from any single interior point of the convex body~\cite{lovasz2006hit}. Specifically, ~\cite{lovasz2006hit} shows that the mixing time in this case is $\tO \left (n^2 \frac{R^2}{r^2} \text{polylog}(M) \right)$, hence the dependence on the warmness parameter $M$ is poly-logarithmic rather than polynomial. This leads to the open question of whether one can obtain a $\tO(n^2 \text{polylog}(M))$ mixing bound for the hit-and-run chain in the case of isotropic convex bodies.

\subsection{Related work}
Sampling uniformly from a convex body is a well-studied problem in the literature. The first polynomial-time algorithm for this problem was proposed by Dyer, Frieze and Kannan~\cite{dyer1991random}. Many subsequent algorithms and improved mixing times have been developed~\cite{lovasz1993random,kannan1997random}. The best known mixing time for sampling a general convex body, whose circumscribed and inscribed balls have radii $R$ and $r$, respectively, from a warm start is $\tO(n^2 R^2/r^2)$, achieved by the ball walk in~\cite{kannan1997random}. The results in \cite{kannan1997random} also imply that the ball walk mixes in $\tO(n^2/\isoperi_n^2)$ for an isotropic convex body, where $\psi_n$ is the Kannan-Lovasz-Simonovits (KLS) constant~\cite{kannan1995isoperimetric}. Sampling from a convex body is closely related to the problem of efficiently computing its volume. Faster mixing often leads to faster volume computation algorithms. For related literature on volume computation, we refer the readers to~\cite{kannan1997random,lovasz2006simulated,cousins2018gaussian,jia2021reducing} and the references therein.

Since a convex polytope is a special case of a convex body, the problem of sampling uniformly from a polytope is covered by algorithms which sample from general convex bodies. However, the additional structure of a polytope also allows for new algorithms with provably better mixing times~\cite{kannan2009random,lee2017geodesic,lee2018convergence,chen2018fast,mangoubi2019faster,laddha2020strong}.

Specifically for the hit-and-run, \cite{lovasz1999hit} shows that it mixes rapidly from a constant-warm start. That is, in $\tO(n^2 R^2/r^2)$ steps, the total variation distance between the current distribution and the stationary distribution is at most a small constant. Here $R$ and $r$ are the radii of the circumscribed and inscribed balls of the convex body, respectively. This bound has the same order of magnitude as for the ball walk, so hit-and-run is no worse when we assume a bound on $R/r$.

While the ball walk is known to mix slowly if started at a corner of the convex body, the hit-and-run is known to mix rapidly from any single interior point. \cite{lovasz2006hit} shows that the hit-and-run mixes in $\tO(n^2 R^2/r^2 \text{polylog}(M))$ from any $M$-warm start. So even if $M$ is of order $n$ to a fixed degree, the mixing time remains in the same order. This line of work has been extended in~\cite{lovasz2006fast} for sampling general logconcave distributions with smoothness assumptions. 

In terms of proof techniques, our proof uses the stochastic localization process~\cite{eldan2013thin} to transform the original uniform distribution to a simpler truncated Gaussian distribution. The idea of using localization processes towards proving mixing bounds for Markov chains is summarized in the framework introduced in ~\cite{chen2022localization}. Localization schemes have also been applied to the analysis of high dimensional distributions that arise in functional analysis, convex and discrete geometry, combinatorics and mathematical physics (see the survey~\cite{eldan2022analysis}). In particular, the use of the stochastic localization process has led to the near-resolution of the Kannan, Lov\'asz and Simonovits conjecture, Bourgain's hyperplane conjecture and the thin-shell conjecture in convex geometry~\cite{chen2021almost,klartag2022bourgain,jambulapati2022slightly}.

\subsection{Formal statement of the problem and results}
Next, we make the required definitions towards the precise statement of our main result. We assume that $K \subset \RR^n$ is a convex body, which is a compact convex set with nonempty interior.
\paragraph{Target distribution.} The distribution from which we want to sample is the uniform measure on the convex body $K \subset \real^n$,
\begin{align*}
  \mu(x) \propto \mathbf{1}_{K}(x), \forall x \in \real^n.
\end{align*}
\paragraph{Isotropic position.} We say a measure $\nu$ on $\real^n$ is \textit{isotropic} if 
\begin{align*}
  \Exs_{X \sim \nu}[X]  = 0 \text{ and } \Var_{X \sim \nu}[X] = \Ind_n.
\end{align*}
A convex body is called isotropic if the corresponding uniform measure is isotropic. 

\paragraph{Hit-and-run Markov chain for a convex body.} The \textit{hit-and-run} chain is defined by the following transition step: given the current state $u \in K$, we generate unit vector $\theta \in \real^n$ which is sampled from the uniform measure on the unit sphere and consider the line $\ell := \{u + t \theta; ~ t \in \RR \}$. The next point is then chosen uniformly from the segment $\ell \cap K$.

\paragraph{Hit-and-run-transition kernel for a general target density.} 
Next, we give a more general definition of the hit-and-run chain. Given a density $\nu$ with respect to the Lebesgue measure on $\RR^n$, we denote by $P_{u \to \cdot}(\nu)$ the hit-and-run one-step transition kernel with starting point $u \in \real^n$ with respect to the underlying (target) density $\nu$, which is defined as follows: For any measurable set $A \subseteq \real^n$, we have
\begin{align}
  \label{eq:hit-and-run_transition}
  P_{u \to A}(\nu) \defn \frac{2}{n \pi_n} \cdot \int_{A} \frac{\nu(x) dx}{\nu(\ell_{ux}) \abss{u-x}^{n-1}},
\end{align}
where $\pi_n$ is the volume of the unit ball $\pi_n = \vol(\ball) = \frac{\pi^{n/2}}{\text{Gamma}(n/2+1)}$, with $\text{Gamma}$ as the gamma function and $\nu(\ell_{ux})$ is the integral of $\nu$ along the line $\ell_{ux}$ through $u$ and $x$. Specifically, for $\ell_{ux}$ the line through $u$ and $x$, we define 
\begin{align*}
  \nu(\ell_{ux}) \defn \int_{\ell_{ux}} \nu(v) \mathcal{H}_1(dv)
\end{align*}
where $\mathcal{H}_1$ is the one-dimensional Hausdorff measure. It is straightforward to check that when taking $\nu$ to be the uniform measure on $K$, this definition identifies with the previous one. Additionally, it is not hard to see that the above chain is reversible, and its stationary distribution is $\nu$~\cite{lovasz2006hit}. 
\paragraph{Lazy chain.} Given a Markov chain with transition kernel $P$. We define its lazy variant $P^{\text{after-lazy}}$, which stays in the same state with probability at least $\frac{1}{2}$, as
\begin{align*}
  P^{\text{after-lazy}}_{x \to S} = \frac{1}{2} \delta_{x \to S} + \frac{1}{2} P_{x \to S}.
\end{align*}
Here $\delta_{x \to \cdot}$ is the Dirac distribution at $x$. 
Since the lazy variant only slows down the convergence rate by a constant factor, we study lazy Markov chains in this paper for its convenience in
theoretical analysis.

Next we introduce a few notions in order to quantify the mixing time of the hit-and-run algorithm. 
\paragraph{Total-variance distance.} We denote the total variation (TV) distance between two probability distributions $\mathcal{P}_1, \mathcal{P}_2$ by 
\begin{align*}
  \tvdis(\mathcal{P}_1, \mathcal{P}_2) = \sup_{A \in \borel(\real^n)} \abss{\mathcal{P}_1(A) - \mathcal{P}_2(A)},
\end{align*}
where $\borel(\real^n)$ is the Borel sigma-algebra on $\real^n$. If $\mathcal{P}_1$ and $\mathcal{P}_2$ admit densities $p_1$ and $p_2$ respectively, we may write
\begin{align*}
  \tvdis(\mathcal{P}_1, \mathcal{P}_2) = \frac{1}{2} \int \abss{p_1(x) - p_2(x)} dx.
\end{align*}

\paragraph{Warm start.} We say an initial distribution $\mu_{\rm{init}}$ is \textit{$M$-warm} if it satisfies
\begin{align*}
  \sup_{S \in \borel(\real^n)} \frac{\mu_{\rm{init}}(S)}{\mu(S)} \leq M. 
\end{align*}
If $M$ is a constant that does not depend on $n$, then we say $\mu_{\rm{init}}$ 
is \textit{constant $M$-warm}. 

\paragraph{Mixing time.} For an error tolerance $\epsilon \in (0, 1)$, the total variance distance $\epsilon$-mixing time of the Markov chain $P$ with initial distribution $\mu_{\rm{init}}$ and target distribution $\mu$ is defined as
\begin{align*}
  \tmix(\epsilon, \mu_{\rm{init}}, \mu) \defn \inf\braces{k \in \naturalnum \mid \tvdis\parenth{\mathcal{T}^k_P (\mu_{\rm{init}}), \mu} \leq \epsilon }. 
\end{align*}
$$~$$
With the above definitions in hand, we state our main theorem. We are interested in obtaining an upper bound for the mixing time of the hit-and-run algorithm for sampling from an isotropic density $\mu \propto \mathbf{1}_K$ in terms of the $\epsilon$-mixing time. Our main theorem reads:
\begin{theorem}
  \label{thm:hitandrun_mixing_isotropic_constant_warm}
  Let $\mu$ be the uniform distribution on an $n$-dimensional isotropic convex body $K \subset \RR^n$. There exist universal constants $C, c > 0$, such that for any $M$-warm initial distribution $\mu_{\rm{init}}$ and any error tolerance $\epsilon \in (0, 1)$ such that $n \geq c \log \frac{M}{\epsilon}$, the $\epsilon$-mixing time of the lazy hit-and-run is upper bounded as follows 
  \begin{align*}
    \tmix\parenth{\epsilon, \mu_{\rm{init}}, \mu} \leq C \frac{n^2}{  \psi_n^2} \parenth{\frac{M}{\epsilon}}^{11} \log^5 \frac{M}{\epsilon}.
  \end{align*}
\end{theorem}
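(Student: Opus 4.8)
The plan is to combine three pieces: a reduction from warm-start total-variation mixing to a conductance-type isoperimetric estimate; the stochastic localization process, which writes $\mu$ as a measure-valued martingale $\mu = \EE[\mu_t]$ whose realizations $\mu_t \propto \exp(\langle c_t, x\rangle - \tfrac{t}{2}\|x\|^2)\mathbf{1}_K(x)$ are truncated Gaussians centered near their own barycenter $a_t$ and concentrated at scale $t^{-1/2}$; and an analysis of the hit-and-run chain targeting such a truncated Gaussian, which — in sharp contrast to the chain targeting $\mu$ itself — is essentially insensitive to the global shape of $K$.

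\emph{Step 1 (reduction to conductance).} Using the standard Lov\'asz--Simonovits machinery for $M$-warm starts (an $s$-conductance argument with $s$ of order $\epsilon/M$), it suffices to show that the lazy hit-and-run chain for $\mu$ has conductance at least $\psi_n\big/\big(n\cdot\mathrm{poly}(M/\epsilon)\cdot\mathrm{polylog}(n)\big)$ on the relevant scale; the $\log^5(M/\epsilon)$ factor and the polynomial dependence on $M/\epsilon$ in the final bound are absorbed here and, mostly, in Step 2. This turns the theorem into a geometric statement relating the one-step distributions $P_{u\to\cdot}(\mu)$ to the (Euclidean) isoperimetry of $\mu$.

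\emph{Step 2 (localization to a truncated Gaussian).} I would run the Gaussian-tilt stochastic localization up to a (poly)logarithmic horizon $T$. The martingale property gives $\mu(A) = \EE[\mu_T(A)]$ for every test set; along the path one controls $\|\mathrm{Cov}(\mu_t)\|_{\mathrm{op}}$ — this is exactly where the KLS constant $\psi_n$ enters, via the stochastic-localization bound on the covariance process — which simultaneously keeps $\mu_t$ a genuinely well-conditioned truncated Gaussian and bounds the drift of its barycenter $a_t$. The central transfer lemma I would establish is that the hit-and-run conductance of $\mu$ is lower bounded, up to the losses recorded in Step 1, by the hit-and-run conductance of the truncated Gaussians $\mu_t$ for $0\le t\le T$; that is, replacing $\mu$ by the far more tractable $\mu_T$ is essentially free. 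Concretely this requires coupling a single discrete hit-and-run step for $\mu$ with an increment of the localization, discretizing the $c_t$-dynamics, and discarding low-probability events of the localization path.

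\emph{Step 3 (hit-and-run on the truncated Gaussian, and assembly).} For $\mu_t \propto \exp(-\tfrac{t}{2}\|x-a_t\|^2)\mathbf{1}_K(x)$, restricting to a line $\ell$ through the current point makes the one-step distribution a one-dimensional Gaussian of width $\asymp t^{-1/2}$ centered at the projection of $a_t$; for a typical point this center lies within distance $O(t^{-1/2})$ of the current point, regardless of how long the chord $\ell\cap K$ is. This removes the "spikiness near $\partial K$" that produces the $R/r$ loss for plain hit-and-run, and lets one run the overlap/conductance argument using the Euclidean isoperimetric inequality for $\mu_t$. Since $\mu_t$ is log-concave, that isoperimetric constant is controlled by $\psi_n$ after passing to isotropic position, and the conductance of the chain for $\mu_t$ comes out to be $\gtrsim \psi_n/n$ up to polylogarithmic factors, uniformly in $t$ in the range considered. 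Combining with the transfer lemma of Step 2 and the reduction of Step 1 yields the claimed bound $\tmix(\epsilon,\mu_{\rm{init}},\mu) \le C\,\frac{n^2}{\psi_n^2}\,(M/\epsilon)^{11}\log^5(M/\epsilon)$.

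The crux, and the part that goes beyond \cite{chen2022localization}, is the transfer lemma of Step 2: unlike in the discrete spin-system instantiations of the localization-scheme framework, one must here interface a continuous measure-valued martingale with a Markov chain whose steps are large and whose kernel $P(\mu)$ itself changes as $\mu$ is tilted. Controlling the discretization error of the localization, the contribution of points in the Gaussian tails, and the regions where the truncated Gaussian is not locally a half-space truncation all force a restriction to high-probability subsets of $K$ and of the localization path, which is the source of the polynomial dependence on $M/\epsilon$.
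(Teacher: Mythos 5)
Your high-level skeleton (Lov\'asz--Simonovits $s$-conductance reduction, stochastic localization to a truncated Gaussian, and a boundary-insensitive analysis of hit-and-run on that Gaussian) matches the paper's announced strategy, but the proposal has two substantive gaps, and they sit exactly where the paper's real work lies. First, the localization must be run to time $T=n$, not to a polylogarithmic horizon: only at $t=n$ does $\mu_t=\nu_{c_t/t,\,t}$ concentrate in a shell $\{|x-\beta|\in[1/\sqrt2,\sqrt2]\}$ of constant radius, which is what makes a hit-and-run step insensitive to $\partial K$; for $t=\mathrm{polylog}(n)$ the measure still has diameter $\asymp\sqrt n$ and the $R/r$ pathology persists. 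Relatedly, your attribution of the $\psi_n$ dependence to the isoperimetry of $\mu_t$ is off: at $T=n$ the measure $\nu_{\beta,n}$ is $n$-strongly log-concave, so its isoperimetric profile is $\gtrsim\sqrt n$ with no appeal to KLS at all; the constant $\psi_n$ (via $\kappa_n$) enters elsewhere, as explained next.

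Second, and more seriously, the transfer lemma is not ``essentially free,'' and no coupling or discretization of the $c_t$-dynamics is involved. The numerator of the conductance transfers cleanly because $t\mapsto\dirichlet_{P_t}(\mathbf 1_S,\mathbf 1_S)$ is a supermartingale (an exact consequence of the localization-schemes framework), so $\dirichlet_{P_0}\ge\EE[\dirichlet_{P_n}]$. The problem is the denominator: the conductance bound for $\nu_{\beta,n}$ is proportional to $\mu_n(S)\,\mu_n(S^c)$, and since $\mu_n$ is highly localized, $\mu_n(S)$ is typically exponentially close to $0$ or $1$, so $\EE[\mu_n(S)\mu_n(S^c)]$ can be far smaller than $\mu(S)\mu(S^c)$; ``discarding low-probability events'' does not help, because the good event itself has small probability and must be lower-bounded. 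One must prove an \emph{approximate conservation of variance}: with probability at least of order $\xi^{3/2}\big/\big(\log(1/\xi)\sqrt{n\log(n)\kappa_n^2}\big)$, the product $\mu_n(S)\mu_n(S^c)$ retains a constant fraction of its initial value. Establishing this is the paper's main technical contribution (it requires replacing the isotropic driving matrix by a three-stage process ending with a rank-one stage, identifying the law of the terminal measure via a Mellin-transform uniqueness argument, and a delicate one-dimensional analysis), and it --- not the isoperimetry of $\mu_t$ --- is the source of both the $1/\psi_n^2$ and most of the $(M/\epsilon)^{11}$ in the final bound. As written, your argument cannot close back from quantities involving $\mu_n(S)$ to $\min\{\mu(S),\mu(S^c)\}-s$ without this step.
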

The proof, together with an outline of the proof strategy, are provided in Section~\ref{sec:proof_setup}. 

\section{Preliminaries}
In this section, we introduce notation, background and preliminary results needed for our proof.
\subsection{Logconcavity and concentration}
\paragraph{Logconcave density.} We say a density $\nu$ is \textit{logconcave} if it satisfies
\begin{align*}
  \nu(x)^{\tau} \nu(y)^{1-\tau} \leq \nu(\tau x + (1-\tau) y), \text{for } x, y \in \real^n, \tau \in [0, 1].  
\end{align*}
For example, $\mu \propto \mathbf{1}_K$ with $K$ being a convex set is logconcave.

\paragraph{Cheeger's isoperimetric constant.} We define the \textit{Cheeger's isoperimetric constant} of a measure $\nu$
\begin{align*}
  \isoperi_\nu \defn \inf_{A \subseteq \real^n} \braces{\frac{\int_{\partial A} \nu}{\min\braces{\nu(A), 1 -\nu(A)}}}.
\end{align*}
And 
\begin{align*}
  \isoperi_n \defn \inf_{\nu \text{ isotropic logconcave on } \real^n} \psi_\nu.
\end{align*} 
It is known that $\psi_n \geq \log^{-5}(n)$~\cite{klartag2022bourgain}. A closely related quantity if $\kappa_n > 0$ defined as follows
\begin{align*}
  \kappa_n^2 \defn \sup_{\nu \text{ isotropic logconcave}} \sup_{\theta \in \sphere} \braces{\abss{\Exs_{X \sim \nu} \angles{X, \theta} \parenth{X \otimes X}}^2 },
\end{align*}
where the first supremum is taken over all isotropic logconcave measures on $\real^n$ and $\sphere$ is the unit sphere in $\real^n$. It is known in Eldan~\cite{eldan2013thin} that there exists a universal constant $C>0$ such that $\frac{1}{\isoperi_n^2} \leq C \log n \cdot \kappa_n^2$.


\subsection{Definitions related to geometric convexity}
Let $K$ be a convex body (compact, convex, full-dimensional convex set) in $\real^n$. Denote by $\ball(x, \tau)$ with ball with center $x$ and radius $\tau$ in $\real^n$. Let $\vol$ be the $n$-dimensional Lebesgue measure. Define 
\begin{align}
  \label{eq:def_fracInt}
  \fracInt(u, t) \defn \frac{\vol(K \cap \ball(u, t))}{\vol(\ball(0, t))},
\end{align}
the fraction of a ball of radius $t$ centered around $u$ that intersects $K$. Let $K_r$ be the set of points $x \in K$ with large $\lambda(x, 2r)$, that is,
\begin{align}
  \label{eq:def_Kr}
  K_r \defn \braces{x \in K \mid \lambda\parenth{x, 2r} \geq \frac{63}{64}}.
\end{align}
As shown in ~\cite{lovasz1999hit}, the set $K_r$ is convex thanks to the Brunn-Minkowski Theorem.

\subsection{Definitions regarding Markov chains}
We are interested in the problem of sampling from a target measure $\nu$ on $\real^n$. Given a Markov chain with transition kernel $P: \real^n \times \borel(\real^n) \to \real_{\geq 0}$ where $\borel(\real^n)$ denotes the Borel $\sigma$-algebra on $\real^n$, the $k$-step transition kernel $P^k$ is defined recursively by
\begin{align*}
  P^k_{x \to dy} = \int_{z \in \real^n} P^{k-1}_{x \to dz} P_{z \to dy}. 
\end{align*}
\paragraph{Associated transition operator.} Let $\transition_P$ denote the transition operator associated to the Markov chain. It is defined as
\begin{align*}
  \transition_P(\nu) (S) \defn \int_{y \in \real^n } d\nu(y) P(y, S), \quad \forall S\in \borel(\real^n).
\end{align*}
When $\nu$ is the distribution of the current state, $\transition_P(\nu)$ is the distribution of the next state. And $\transition_P^n(\nu) \defn \transition_{P^n}(\nu)$ is the distribution of the state after $n$ steps. 

\paragraph{Dirichlet form.} Let $\mathcal{L}_2(\nu)$ be the space of square integrable functions under the measure $\nu$. The \textit{Dirichlet form} $\dirichlet_P: \mathcal{L}_2(\nu) \times \mathcal{L}_2(\nu) \to \real_{\geq 0}$ associated with the transition kernel $P$ is given by
\begin{align*}
  \dirichlet_P(f, g) \defn \frac{1}{2} \int \int \parenth{f(x) - f(y)} \parenth{g(x) - g(y)}P_{x \to dy} d\nu(x).
\end{align*}

\paragraph{Truncated conductance.} For $s \in (0, 1)$, we define the $s$-conductance $\Phi_s$ of the Markov chain $P$ with its stationary measure $\nu$ as follows
\begin{align}
  \label{eq:def_s_conductance}
  \Phi_s (P) \defn \inf_{S: s < \nu(S) < 1-s} \frac{\int_S P_{x \to S^c} d\nu(x)}{\min\braces{\nu(S), \nu(S^c)} - s}.
\end{align}
When compared to conductance (the case $s=0$), $s$-conductance allows us to ignore small parts of the distribution where the conductance is difficult to bound. Remark that using the Dirichlet form notation, we can write
\begin{align*}
  \Phi_s (P) = \inf_{S: s < \nu(S) < 1-s} \frac{\dirichlet(\mathbf{1}_{S}, \mathbf{1}_S)}{\min\braces{\nu(S), \nu(S^c)} - s}.
\end{align*}

The following lemma by Lov\'asz and Simonovits~\cite{lovasz1993random} connects the $s$-conductance with the mixing time of a Markov chain.

\begin{lemma}[Corollary 1.5 in~\cite{lovasz1993random}]
  \label{lem:lovasz_lemma}
  Consider a reversible lazy Markov chain with transition kernel $P$ and stationary distribution $\mu$. Let $\mu_{\rm{init}}$ be an $M$-warm initial distribution. Let $0 < s < \frac{1}{2}$. Then 
  \begin{align*}
    \tvdis\parenth{\transition_P^N (\mu_{\rm{init}}), \mu} \leq Ms + M \parenth{1 - \frac{\Phi_s^2}{2}}^n .
  \end{align*}
\end{lemma}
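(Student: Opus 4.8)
The statement to prove is Lemma~\ref{lem:lovasz_lemma}, which is attributed to Lov\'asz and Simonovits (Corollary 1.5 in~\cite{lovasz1993random}). Since this is an established result, the plan is to reproduce the classical argument rather than invent something new. The key object is the Lov\'asz--Simonovits potential function: for a fixed reference measure $\mu$ and a signed measure (or density) $g$ with respect to $\mu$, one sorts the ``level sets'' of $g$ and defines $h_g(x)$ for $x \in [0,1]$ to be the supremum over measurable sets $S$ with $\mu(S) = x$ of $\int_S g\, d\mu$; equivalently $h_g$ is the concave ``envelope'' obtained by integrating the decreasing rearrangement of $g$. The reason this is the right gadget is twofold: first, $h_g$ is a concave function on $[0,1]$ with $h_g(0) = 0$ and $h_g(1) = \int g \, d\mu$, so control of $h_g$ near the endpoints plus a uniform bound gives total variation control; second, and crucially, one step of a lazy reversible chain contracts this potential in a quantitative way governed by the $s$-conductance.

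First I would set up notation: let $g_N$ be the density of $\transition_P^N(\mu_{\rm{init}})$ with respect to $\mu$, and write $h_N = h_{g_N}$. The warm-start hypothesis $g_0 \le M$ pointwise gives the initial bound $h_0(x) \le Mx$ for all $x$, and by symmetry (considering $1-g$) also $h_0(x) \le 1 - (1-x) = \ldots$ — more precisely $h_0(x) \le \min\{Mx, 1\}$ and one also tracks the behaviour near $x = 1$. Second, the main lemma in the Lov\'asz--Simonovits machinery: for a lazy reversible chain with stationary measure $\mu$, one shows that for every $x \in [2s, 1-2s]$,
\begin{align*}
  h_{N+1}(x) \le \tfrac{1}{2}\, h_N\!\parenth{x - 2\sigma(x)\Phi_s} + \tfrac{1}{2}\, h_N\!\parenth{x + 2\sigma(x)\Phi_s},
\end{align*}
where $\sigma(x) = \min\{x, 1-x\} - s$ (the denominator appearing in the definition of $\Phi_s$). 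This step is where laziness is used — it ensures the ``averaging'' has enough mass staying put that the two evaluation points of $h_N$ straddle $x$ — and where reversibility is used, to symmetrize the flow $\int_S P_{x\to S^c} d\mu$ so that the conductance bound can be applied to whichever of $S, S^c$ is smaller. Third, I would run an induction on $N$: assuming a bound of the form $h_N(x) \le H_N(x)$ for a suitable concave comparison function $H_N$ (built from the function $\min\{Mx,\ldots\}$ shifted/damped), the one-step contraction above together with concavity of $h_N$ propagates the bound with the factor $(1 - \Phi_s^2/2)$ accumulating; after $N$ steps one arrives at $h_N(x) - x \le M(1-\Phi_s^2/2)^N$ for $x$ away from $0$ and $1$, together with the $Ms$ slack coming from the $[0,2s]$ and $[1-2s,1]$ boundary strips that the conductance bound cannot reach. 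Finally, $\tvdis(\transition_P^N(\mu_{\rm{init}}),\mu) = \sup_S (g_N(\mu(S)\text{-level}) - \mu(S))$ is read off from $\sup_x (h_N(x) - x)$, yielding $Ms + M(1-\Phi_s^2/2)^N$.

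The main obstacle — and the only genuinely delicate point — is establishing the one-step contraction inequality for $h_{N+1}$ in terms of $h_N$ with the correct shift $2\sigma(x)\Phi_s$. One must argue that the extremal set $S$ achieving $h_{N+1}(x) = \transition_P(g_N\mu)(S)$ can be analyzed by splitting the transition mass, using reversibility to write $\transition_P(g_N\mu)(S) = \int_{S} g_N \, d\mu - \int_S P_{y \to S^c}\, (g_N\,d\mu)(y) + \int_{S^c} P_{y\to S}\,(g_N\,d\mu)(y)$, bounding the net cross-flow below by $\Phi_s$ times $\sigma(x)$ via the $s$-conductance (after first handling the warm-start weighting carefully, e.g.\ via a separate bound or by noting monotonicity of the rearrangement), and then recognizing the resulting expression as a convex combination of values of $h_N$ at two symmetric points — using concavity of $h_N$ to pass from the specific set $S$ to the envelope. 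Everything after that is a routine, if somewhat fiddly, induction on concave comparison functions, so I would state it as a lemma, cite~\cite{lovasz1993random} for the details of the rearrangement calculus, and present the induction compactly.
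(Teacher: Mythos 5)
The paper does not prove this lemma; it is imported verbatim as Corollary~1.5 of Lov\'asz--Simonovits, so there is no internal proof to compare against. Your sketch correctly reproduces the classical argument from that reference: the level-set potential $h_g$, the one-step chord inequality $h_{N+1}(x) \le \tfrac12 h_N(x - 2\sigma(x)\Phi_s) + \tfrac12 h_N(x + 2\sigma(x)\Phi_s)$ driven by laziness and reversibility, the induction against concave comparison functions, and the read-off of total variation from $\sup_x (h_N(x)-x)$. One point is under-specified in a way that would matter if you wrote it out: the comparison function $H_N$ cannot be built only from ``shifted/damped'' piecewise-linear pieces like $\min\{Mx,\ldots\}$, because the chord-averaging step preserves linear functions exactly and hence produces no contraction on them; the standard choice is $H_N(x) = x + Ms + M\bigl(1-\Phi_s^2/2\bigr)^N \min\bigl\{\sqrt{x-s},\sqrt{1-x-s}\bigr\}$ (or similar), where the strictly concave square-root term is precisely what converts a chord of half-length $2\sigma(x)\Phi_s$ into the multiplicative factor $1-\Phi_s^2/2$ per step. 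With that correction the induction closes and the argument is complete.
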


\subsection{Background on the stochastic localization process}
\label{sub:stochastic_localization}
For a probability density $\nu$ on $\RR^n$, define
\begin{align*}
  \bb(\nu) \defn \int x \nu(x) dx,
\end{align*}
its center of mass.

Given a density $\mu$ on $\real^n$, we define the stochastic localization (SL) process~(\cite{eldan2013thin}) with a positive semi-definite control matrix $C_t$, by 
\begin{align}
  \label{eq:def_mu_t}
  \mu_t(x) \defn \frac{1}{Z(t, c_t)} \exp\parenth{c_t\tp x - \frac{1}{2} \langle B_t x, x \rangle} \mu(x),
\end{align}
where $c_t$ and $B_t$ satisfy the following stochastic differential equations:
\begin{align*}
  dc_t &= C_t dW_t + C_t^2 \bb(\mu_t) dt\\
  dB_t &= C_t^2 dt.
\end{align*}
Here $W_t$ is the standard Brownian motion and $C_t$ is any process adapted to $W_t$ which takes values in the space of $n \times n$ matrices. The existence and uniqueness of the solutions of the SDE is shown via the standard existence and uniqueness results on SDEs (see e.g. Lemma 3 in~\cite{chen2021almost}). 

It is known that $\mu_t$ satisfies the following SDE for $x \in \real^n$
\begin{align}
  \label{eq:mu_t_sde}
  d\mu_t(x) = (x-\bb(\mu_t))\tp C_t dW_t \mu_t(x).
\end{align}
Define also
\begin{align}
  \label{eq:def_A_t}
  A_t \defn \int (x - \bb(\mu_t)) (x - \bb(\mu_t))\tp \mu_t(x) dx,
\end{align}
the covariance matrix of $\mu_t$.

Note that running the SL process with a starting measure $\mu \propto \mathbf{1}_K$ results in a density at time $t$ which is a random density which has an explicit form of a truncated Gaussian on a convex set.
\paragraph{Truncated Gaussian on a convex set.} For $m > 0$ and $\beta \in \real^n$, define
\begin{align*}
  \nu_{\beta, m}(x) \defn \frac{e^{-\frac{m}{2} \abss{x - \beta}^2} \mathbf{1}_K}{\int_{\real^n} e^{-\frac{m}{2} \abss{x - \beta}^2} \mathbf{1}_K dx }.
\end{align*}
It is the Gaussian with mean $\beta \in \real^n$ and variance $\frac{1}{m}$ supported on the convex set $K$. 

In light of Eq.~\eqref{eq:def_mu_t}, we have that for all $t \geq 0$, under the choice $C_s = \Ind_n$ for all $s \in [0,t]$, the measure $\mu_t$ obtained by the SL process has the form
\begin{align} \label{eq:def_mu_t_id}
  \mu_t = \nu_{c_t/t, t}.
\end{align}

The following is a special case of a classical result by Brascamp and Lieb~\cite{brascamp2002extensions}:
\begin{theorem}
One has $\mathrm{Cov}(\nu_{\beta,m}) \preceq \frac{1}{m} \Ind_n$.
\end{theorem}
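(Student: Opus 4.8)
The statement is the Brascamp--Lieb inequality specialized to a Gaussian density $e^{-\frac{m}{2}|x-\beta|^2}$ truncated to a convex body $K$. The cleanest route is the general Brascamp--Lieb variance inequality: if $p(x) \propto e^{-V(x)}$ is a log-concave density with $\nabla^2 V(x) \succeq H$ (in the positive semidefinite order) for all $x$ in the support, and $H$ is positive definite, then for every smooth $f$ one has $\Var_p(f) \le \Exs_p[\langle \nabla f, H^{-1}\nabla f\rangle]$. I would first recall or cite this inequality in the form above. Then I would apply it to $\nu_{\beta,m}$: the potential is $V(x) = \frac{m}{2}|x-\beta|^2 + \iota_K(x)$, where $\iota_K$ is the convex indicator ($0$ on $K$, $+\infty$ outside). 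On the interior of $K$ the Hessian of $V$ is exactly $m \Ind_n$, and the (distributional) contribution of $\iota_K$ is positive semidefinite because $K$ is convex, so $\nabla^2 V \succeq m\Ind_n$ holds in the appropriate sense. Hence $\Var_{\nu_{\beta,m}}(f) \le \frac{1}{m}\Exs_{\nu_{\beta,m}}[|\nabla f|^2]$ for all suitable $f$.

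To extract the covariance bound, I would test this inequality against linear functionals: for a fixed direction $\theta \in \sphere$, take $f(x) = \langle x, \theta\rangle$. Then $\nabla f = \theta$, so $|\nabla f|^2 = 1$, and the inequality reads $\Var_{\nu_{\beta,m}}(\langle X,\theta\rangle) \le \frac{1}{m}$. Since $\langle \mathrm{Cov}(\nu_{\beta,m})\,\theta, \theta\rangle = \Var_{\nu_{\beta,m}}(\langle X,\theta\rangle)$, this says $\langle \mathrm{Cov}(\nu_{\beta,m})\,\theta,\theta\rangle \le \frac{1}{m}$ for every unit vector $\theta$, which is precisely $\mathrm{Cov}(\nu_{\beta,m}) \preceq \frac{1}{m}\Ind_n$.

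Since the paper explicitly attributes this to Brascamp and Lieb and calls it a special case of a classical result, the intended proof is presumably just to invoke~\cite{brascamp2002extensions} directly rather than reproving the variance inequality. If a self-contained argument is wanted instead, the main technical point to handle carefully is the non-smoothness of the potential at $\partial K$: one would approximate $\iota_K$ by smooth convex potentials $V_k$ with $\nabla^2 V_k \succeq m\Ind_n$ (e.g.\ $V_k(x) = \frac{m}{2}|x-\beta|^2 + k\cdot d(x,K)^2$ or a softened barrier), apply the smooth Brascamp--Lieb inequality to each $\nu_k \propto e^{-V_k}$ to get $\mathrm{Cov}(\nu_k)\preceq \frac{1}{m}\Ind_n$, and then pass to the limit $k\to\infty$ using that $\nu_k \to \nu_{\beta,m}$ weakly with uniformly bounded second moments (so covariances converge). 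I expect that limiting/regularization step to be the only genuine obstacle; everything else is a direct substitution.
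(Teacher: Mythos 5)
Your proposal is correct and matches the paper's approach: the paper gives no proof at all, simply stating the bound as a special case of the classical Brascamp--Lieb result and citing it, exactly as you anticipated. Your sketch of the underlying variance inequality applied to linear functionals, including the regularization needed at $\partial K$, is the standard and correct way to make that citation self-contained.
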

If follows that, for every $t>0$, assuming the choice $C_s = \Ind_n$ for $s \in [0,t)$, one has almost surely
\begin{equation} \label{eq:COV-bl}
A_t \preceq \frac{1}{t} \Ind_n.
\end{equation}

\subsection{Other notation}
We use $\gamma: \real \to \real_+$ to denote the standard Gaussian density function, and $\Gamma: \real \to [0, 1]$ to denote the standard Gaussian cumulative density function. We use $\Normal(b, \sigma^2)$ to denote the Gaussian measure with mean $b$ and variance $\sigma^2$. For $p$ and $\tilde{p}$ two measures, $p * \tilde{p}$ denotes the convolution of the two. 

We use big-O notation $O(\cdot)$ to denote asymptotic upper bounds which ignore all constants. For example, we write $g_1(n) = O(g_2(n))$ if there exists a universal constant $c > 0$ such that $g_1(n) \leq c g_2(n)$ when $n$ is larger than a universal constant. We use $\tO(\cdot)$ to denote asymptotic upper bounds which ignore both constants and poly-logarithmic factors on the parameters involved.

\section{Proof of the main theorem}
\label{sec:proof_setup}
We prove Theorem~\ref{thm:hitandrun_mixing_isotropic_constant_warm}, by bounding the $s$-conductance, via Lemma \ref{lem:lovasz_lemma}. Roughly speaking, for constant $M$-warm initial distribution, taking 
\begin{align*}
  s = \frac{\epsilon}{2M}, n \geq \frac{2}{\Phi_s^2} \log \frac{2M}{\epsilon}
\end{align*} 
results in a mixing time of $\frac{2}{\Phi_s^2} \log \frac{2M}{\epsilon}$. As a consequence, the main focus of the proof is in lower bounding the $s$-conductance.

Unlike~\cite{lovasz1999hit}, we do not bound the $s$-conductance directly, as that requires an argument that depends in rather intricate ways on the geometry of the convex set $K$. Instead, we employ an annealing of localization schemes introduced in \cite{chen2022localization} which attempts to reduce the analysis to a simpler case in which the measure is localized, in the sense that the uniform measure on $K$ is multiplied by a Gaussian density with small variance.

This is done by considering the stochastic localization (SL) process defined in subsection \ref{sub:stochastic_localization} to the measure $\mu$. Given $\mu \sim \mathbf{1}_{K}$, we consider the process $(\mu_t)_{t\geq0}$ defined by equation \eqref{eq:def_mu_t} with the choice $C_t=\Ind_n$ up to time $T$. We fix the choice of the time $T=n$.

The annealing technique boils down to the following two main steps:
\begin{enumerate}
  \item Show that for a fixed set $E$ whose measure is bounded away from $0$ and $1$, the quantity $\mu_t(E)$ is also bounded away from those values with non-negligible probability. This behavior is referred to in \cite{chen2022localization} as \emph{approximate conservation of variance}. Under this condition, the conductance for the transition kernel at time $0$ can be lower-bounded in terms of the conductance of the transition kernel at time $T$, so that one only needs to give a lower bound on the latter quantity.
  \item Bound the conductance of the hit-and-run chain which corresponds to the measure $\mu_T$ which, according to Eq.~\eqref{eq:def_mu_t_id}, is a Gaussian with variance $\tfrac{1}{T} \Ind_n$ restricted to the set $K$. This is an easier task than the analysis of the original conductance, since this measure is typically localized well-inside the convex body $K$, so that a step of hit-and-run is hardly affected by its boundary.
\end{enumerate}
The first of the two steps highlighted above is captured by the following lemma.
\begin{lemma}
  \label{lem:approximate_conserv_variance_overall}
  Let $K$ be an isotropic convex body in $\RR^n$ and let $\mu_T$ be defined as above. Let $\zeta > 0$ and let $K_r$ be a convex subset of $K$ with $\mu(K_r) \geq 1- \zeta/100$. Then for any $E \subset K_r$ whose measure $\mu(E) = \xi \in (0, 1/2]$ satisfies $\zeta \leq c \xi^2/\sqrt{\log(10^4/\xi)}$, we have that
  \begin{align*}
    \PP \left ( \mu_T(E)(\mu_T(K_r) - \mu_T(E)) \geq \frac{1}{16} \cdot \mu(E) (\mu(K_r) - \mu(E)) \right ) \geq \frac{c \xi^{3/2}}{\log \frac{1}{\xi}\sqrt{n\log(n) \kappa^2_n}},
  \end{align*}
  for a universal constant $c>0$.
\end{lemma}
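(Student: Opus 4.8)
The plan is to track the quantity $\mu_t(E)$ and $\mu_t(K_r)$ along the stochastic localization process and show that, with the stated probability, neither the ``numerator'' $\mu_t(E)$ collapses to $0$ nor the ``gap'' $\mu_t(K_r)-\mu_t(E)$ collapses, so that their product stays comparable to its value at time $0$. First I would recall from Eq.~\eqref{eq:mu_t_sde} that for any fixed measurable set $S$, the process $Y_t^S := \mu_t(S)$ is a martingale with
\[
dY_t^S = \left( \int_S (x - \bb(\mu_t))\tp \, d\mu_t(x) \right) C_t \, dW_t = \langle v_t^S, dW_t\rangle,
\]
where $v_t^S := \int_S (x-\bb(\mu_t)) d\mu_t(x)$ since $C_t = \Ind_n$. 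The key point is to control the quadratic variation $\int_0^T |v_t^S|^2 \, dt$. Using Cauchy--Schwarz, $|v_t^S|^2 \le \mu_t(S) \cdot \int_S |x-\bb(\mu_t)|^2 d\mu_t(x) \le \mu_t(S) \cdot \trace(A_t)$, and by the Brascamp--Lieb bound \eqref{eq:COV-bl}, $\trace(A_t) \le n/t$. This already suggests the quadratic variation accumulates like $\int_0^T \mu_t(S)\cdot \frac{n}{t}\,dt$, which is the source of the $\sqrt{n\log n}$-type loss. A sharper bound replaces $\trace(A_t)$ by a directional second moment controlled by $\kappa_n$: writing the drift of $Y_t^S$ in a suitable form and using the definition of $\kappa_n$ (the sup of $|\Exs_\nu \langle X,\theta\rangle (X\otimes X)|$ over isotropic logconcave $\nu$), one gets that the relevant increments scale with $\kappa_n$ rather than with the naive trace bound. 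This is where the parameter $\kappa_n$ (and hence, via Eldan's inequality $\isoperi_n^{-2} \le C\log n \cdot \kappa_n^2$, the KLS constant) enters.

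The core estimate I would then establish is a two-sided ``non-degeneracy'' statement: for the martingale $Y_t^E = \mu_t(E)$ started at $\xi = \mu(E)$, the probability that $Y_t^E$ stays in, say, $[\xi/2, \text{something}]$ and simultaneously $\mu_t(K_r) - \mu_t(E)$ stays bounded below on $[0,T]$ is at least of the claimed order. The natural tool is a stopping-time / optional-stopping argument: define $\tau$ to be the first time $\mu_t(E)$ drops below $\xi/2$ or $\mu_t(K_r)-\mu_t(E)$ drops below half its initial value $\mu(K_r)-\xi \ge 1/2 - \xi$, and show $\PP(\tau > T)$ is not too small. Since $Y^E$ is a bounded martingale, $\PP(\tau_{\text{down}} \le T \mid \text{no up-crossing})$ can be bounded by a maximal/Doob inequality once we know the quadratic variation up to time $T$ is $\lesssim \xi \cdot n\log n\cdot \kappa_n^2$ with good probability; a union bound over the two ``bad'' events (for $E$ and for $K_r \setminus E$, the latter also being a subset of $K$ so the same second-moment control applies) and the use of $\mu(K_r) \ge 1-\zeta/100$ to handle the small-mass discrepancy between $K$ and $K_r$ finishes it. The role of the hypothesis $\zeta \le c\xi^2/\sqrt{\log(10^4/\xi)}$ is precisely to guarantee that the mass leaking outside $K_r$ is negligible relative to $\xi^2 = \mu(E)^2$, so that ``$\mu_t(K_r)$'' behaves like ``$\mu_t(K) = 1$'' for the purposes of the argument, and the logarithmic factor there matches the $\sqrt{\log(1/\xi)}$ in the time horizon one must run.

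The main obstacle, I expect, is getting the quantitative probability bound $c\xi^{3/2}/(\log\tfrac1\xi \sqrt{n\log n\,\kappa_n^2})$ with the correct powers of $\xi$ and $n$ — this is not a soft argument. One needs: (i) a careful estimate showing the quadratic variation of $Y^E$ up to time $T=n$ is, with probability close to $1$, at most $O(\xi \log(1/\xi) \cdot \kappa_n^2 \log n)$ (the $\xi$-factor coming from $\mu_t(E)$ being small most of the time, the $\log(1/\xi)$ from how long $\mu_t(E)$ can realistically stay near $\xi$, and the $\kappa_n^2 \log n$ from the directional second-moment bound integrated against $dt/t$ up to $t=n$); (ii) a lower bound, via the second moment method or a barrier argument for the martingale $Y^E$ conditioned to not hit $0$, that with the stated (small) probability $Y^E_T$ and the gap are both $\gtrsim \xi$; and (iii) bookkeeping to convert ``product of the two martingales at time $T$ is $\gtrsim \xi(1-\xi)$'' from the event that both stay non-degenerate, which requires $\mu_T(E) \gtrsim \xi/4$ and $\mu_T(K_r) - \mu_T(E) \gtrsim (1-\xi)/4$, yielding the factor $\tfrac1{16}$. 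The delicate balance is that running the process longer (larger $T$) makes step (ii) easier — the truncated Gaussian at time $T$ is better localized — but makes step (i)'s quadratic-variation bound worse by the extra $\int^{T} dt/t = \log T$; the choice $T = n$ together with $\zeta \lesssim \xi^2/\sqrt{\log(1/\xi)}$ is what makes both sides meet.
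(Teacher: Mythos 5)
Your overall plan---treat $\mu_t(E)$ and $\mu_t(K_r)$ as bounded martingales, control their quadratic variation via Brascamp--Lieb and $\kappa_n$, and finish with optional stopping and a maximal inequality---is precisely the direct route that the paper explicitly says it could not carry out (``We have not been able to show this directly with respect to the SL process with the choice $C_t=\Ind_n$''), and the genuine gap sits in your step (ii). An upper bound on the quadratic variation of $Y^E_t=\mu_t(E)$ only tells you the martingale does not move too much, which is useful for the easy half of the statement (keeping $\mu_t(E)$ near $\xi$ for a short initial time, as in Lemma~\ref{lem:stage1}). What the lemma actually requires is a \emph{lower} bound on the probability that, \emph{at the terminal time} $T=n$, $\mu_T(E)$ lies in an intermediate window: large enough that $\mu_T(E)\gtrsim\xi$ yet small enough that $\mu_T(K_r)-\mu_T(E)$ stays bounded below. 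At $T=n$ the measure $\mu_T$ is a Gaussian of covariance $\tfrac1n\Ind_n$ restricted to $K$, so $\mu_T(E)$ is nearly bimodal (close to $0$ or to $1$); the expected total quadratic variation of the martingale up to $T=\infty$ is exactly $\xi-\xi^2$, and the event you need is precisely the ``not yet resolved'' event whose probability, of order $\xi^{3/2}/\sqrt{n\kappa_n^2\log n}$, must be bounded from \emph{below}. Optional stopping gives that the martingale reaches level $\tfrac12$ at \emph{some} time with probability $\gtrsim\xi$, and Markov gives $\PP(\mu_T(E)\ge\xi/2)\ge\xi/2$, but neither controls the joint event at time $T$: conditioned on $\mu_T(E)\ge\xi/2$, the martingale is typically already close to $1$, killing the factor $\mu_T(K_r)-\mu_T(E)$. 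Your appeal to ``a second moment method or a barrier argument'' names the missing argument rather than supplying it; and your heuristic that a larger $T$ makes this step easier is backwards---more localization makes $\mu_T(E)$ more bimodal and the intermediate event rarer.

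The paper's resolution is structurally different and is the actual content of the proof. It replaces $C_t=\Ind_n$ by a three-stage driving matrix (Subsection~\ref{def:the_3_stage_SL}): a short isotropic stage of length $T_1=\xi/(\kappa_n^2\log n)$ during which Lemma~\ref{lem:A_t_op_Klartag_Lehec} keeps $\vecnorm{A_t}{2}\le 2$ and hence $\hmu_t(E)$ near $\xi$; a second stage whose driving matrix is chosen orthogonal to $\int_E(x-\bb(\hmu_t))\,d\hmu_t(x)$ so that $\hmu_t(E)$ is \emph{frozen} while all but a rank-one piece of the Gaussian budget $n\Ind_n$ is spent; and a third, genuinely one-dimensional stage in the remaining direction $\theta$. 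The law of the terminal measure is identified with that of $\mu_T$ via uniqueness of the mixture representation (Lemma~\ref{lem:identification_of_marginal_on_Y}, proved by Mellin inversion). The required lower bound then comes from the one-dimensional analysis of Stage 3 (Lemmas~\ref{lem:stage3_case1_nu} and~\ref{lem:stage3_case2_nu}): the map $y\mapsto\mo_{y,\thirdT}(h_E)$ is $\sqrt{\thirdT\sigma^2}\le\sqrt{n/T_1}$-Lipschitz, so the sets of $y$ where it is below $\xi/2$ and above $3/4$ are separated by a gap of width at least $\tfrac12\sqrt{T_1/n}$, and a one-dimensional isoperimetric inequality for the law of $y$ converts this gap into mass $\gtrsim\xi\sqrt{T_1/n}/\log(1/\xi)=\xi^{3/2}/\bigl(\log(1/\xi)\sqrt{n\kappa_n^2\log n}\bigr)$ for the intermediate event, with the convexity of $K_r$ and the hypothesis on $\zeta$ used to keep $\mo_{y,\thirdT}(h_{K_r})$ large on that interval. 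None of these ingredients---the freezing stage, the identification of laws, the rank-one reduction, the Lipschitz-plus-isoperimetry lower bound---appears in your proposal, and without some substitute for them the direct martingale approach does not close.
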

The following lemma lower bounds the volume of $K_r$ used in Lemma~\ref{lem:approximate_conserv_variance_overall}. See \cite{lovasz1999hit} for a proof.
\begin{lemma}[Lemma 2 in~\cite{lovasz1999hit}]
  \label{lem:K_r_size}
  Suppose $K$ contains a unit ball. Let $\mu \propto \mathbf{1}_K$. Then
  \begin{align*}
    \mu(K_r) \geq 1 - 2\sqrt{n} r.
  \end{align*}
\end{lemma}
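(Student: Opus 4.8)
The plan is to reduce the lemma to an estimate on the $\mu$-average of $1-\lambda(\cdot,2r)$, and to evaluate that average by an integral-geometric identity. Translating $K$ (which affects neither $\lambda$ nor any volume), I assume $\ball(0,1)\subseteq K$. Since $K\setminus K_r=\{x\in K:1-\lambda(x,2r)>\tfrac1{64}\}$, Markov's inequality gives
\[
  \mu(K\setminus K_r)\ \le\ 64\,\Exs_{X\sim\mu}\bigl[1-\lambda(X,2r)\bigr],
\]
so it suffices to show $\Exs_{X\sim\mu}[1-\lambda(X,2r)]=O(\sqrt n\,r)$. (The precise constant $2$ in the statement comes from the sharper analysis of~\cite{lovasz1999hit}; for the use of this lemma in Lemma~\ref{lem:approximate_conserv_variance_overall} a bound of the form $\mu(K_r)\ge 1-O(\sqrt n\,r)$ is all that is needed.)

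For the average I would write $1-\lambda(x,2r)=\vol(\ball(x,2r)\setminus K)/\vol(\ball(0,2r))$, unfold the ball as $\vol(\ball(x,2r)\setminus K)=\int_{\ball(0,2r)}\mathbf 1[x+z\notin K]\,dz$, and apply Fubini:
\[
  \Exs_{X\sim\mu}\bigl[1-\lambda(X,2r)\bigr]=\frac{1}{\vol(K)\,\vol(\ball(0,2r))}\int_{z\in\ball(0,2r)}\vol\bigl(K\setminus(K-z)\bigr)\,dz .
\]
The geometric heart of the matter is that, for a convex body, a line meets $\partial K$ at most twice, so translating $K$ by $z$ sweeps only a slab over the boundary it illuminates: $\vol(K\setminus(K-z))=\vol((K+z)\setminus K)\le\|z\|\,\vol_{n-1}(P_{z^{\perp}}K)$, where $P_{z^{\perp}}$ is the orthogonal projection onto $z^{\perp}$. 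Substituting, switching to polar coordinates $z=\rho u$ so that the radial integral contributes $\tfrac{(2r)^{n+1}}{n+1}$, and invoking Cauchy's projection formula $\int_{\Sph}\vol_{n-1}(P_{u^{\perp}}K)\,d\sigma(u)=\pi_{n-1}\,\mathrm{Surf}(K)$ (with $\mathrm{Surf}(K)=\mathcal{H}^{n-1}(\partial K)$), I would obtain
\[
  \Exs_{X\sim\mu}\bigl[1-\lambda(X,2r)\bigr]\ \le\ \frac{\pi_{n-1}}{\pi_n}\cdot\frac{2r}{n+1}\cdot\frac{\mathrm{Surf}(K)}{\vol(K)} .
\]

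The remaining ratio is handled by the inscribed ball: the divergence theorem gives $\vol(K)=\tfrac1n\int_{\partial K}\langle y,\nu_y\rangle\,d\mathcal{H}^{n-1}(y)$, and $\ball(0,1)\subseteq K$ forces the support function to satisfy $\langle y,\nu_y\rangle=h_K(\nu_y)\ge 1$ on $\partial K$, hence $\mathrm{Surf}(K)\le n\,\vol(K)$. Combined with the elementary bound $\pi_{n-1}/\pi_n\le\sqrt{(n+1)/(2\pi)}$ (log-convexity of $\Gamma$), this yields $\Exs_{X\sim\mu}[1-\lambda(X,2r)]\le\tfrac{2}{\sqrt{2\pi}}\sqrt n\,r\le\sqrt n\,r$, and the Markov reduction closes the argument up to the universal constant.

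I expect the main obstacle to be exactly the Markov step, which is lossy: by itself the argument above yields $\mu(K_r)\ge 1-C\sqrt n\,r$ for a moderate universal $C$ rather than the stated $2$. Tightening it to the sharp constant requires exploiting that $x\mapsto\lambda(x,2r)$ is log-concave (being, up to a scalar, the convolution $\mathbf 1_K*\mathbf 1_{\ball(0,2r)}$, it is log-concave by Pr\'ekopa--Leindler), so that $K_r$ is convex and $K\setminus K_r$ is a thin shell whose volume can be estimated directly along the lines of~\cite{lovasz1999hit}, rather than through a crude tail bound.
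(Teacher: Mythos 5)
Your argument is sound, but note first that the paper does not actually prove this lemma: it is quoted verbatim from Lov\'asz~\cite{lovasz1999hit} (Lemma 2 there) with a pointer to that reference, so there is no internal proof to match against. Your route is a genuinely self-contained alternative, and I checked the main steps: the Fubini identity
$\Exs_{\mu}[1-\lambda(X,2r)]=\frac{1}{\vol(K)\vol(\ball(0,2r))}\int_{\ball(0,2r)}\vol\bigl(K\setminus(K-z)\bigr)\,dz$ is correct; the slab bound $\vol((K+z)\setminus K)\le \|z\|\,\vol_{n-1}(P_{z^\perp}K)$ follows from the fact that each line in direction $z$ meets $K$ in an interval; the normalization of Cauchy's formula ($\int_{\Sph}\vol_{n-1}(P_{u^\perp}K)\,d\mathcal{H}^{n-1}(u)=\pi_{n-1}\mathrm{Surf}(K)$, checked on the unit ball) and the bound $\mathrm{Surf}(K)\le n\vol(K)$ from $\ball(0,1)\subseteq K$ are both right, giving $\Exs_{\mu}[1-\lambda(X,2r)]\le \tfrac{2}{\sqrt{2\pi}}\sqrt{n}\,r$. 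The one real shortfall is the one you flag yourself: the Markov step multiplies by $64$, so you obtain $\mu(K_r)\ge 1-64\sqrt{n}\,r$ rather than $1-2\sqrt{n}\,r$; the cited proof gets the sharp constant by a direct geometric containment argument rather than an averaging-plus-tail-bound argument. This weakening is immaterial for the paper: the lemma is used only in the proof of Theorem~\ref{thm:hitandrun_mixing_isotropic_constant_warm} to guarantee $\mu(K_r)\ge 1-\zeta/100$, and with your constant one simply replaces $r=\zeta/(200\sqrt{n})$ by $r=\zeta/(6400\sqrt{n})$ in Eq.~\eqref{eq:main_thm_param_choices}, which changes only universal constants downstream. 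So: correct up to the universal constant, by a different and arguably more transparent method; if you want the statement exactly as written, you would need to replace the Markov reduction by the sharper shell-volume estimate you sketch at the end.
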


In order to reduce the conductance of the original Markov chain to that of $P_{\cdot \to \cdot}(\mu_T)$, we also need the following lemma. Its proof follows from ~\cite[Proposition 48]{chen2022localization} and from the fact that the hit-and-run chain is the Markov chain associated to the subspace-localization scheme described in \cite[Section 2.1]{chen2022localization}.
\begin{lemma}
  \label{lem:dirichlet_form_super_martingale}
  Consider the process $(\mu_t)_t$ defined above. Fix a function $f \in \mathcal{L}_2(\mu)$. Let $P_t = P_{\cdot \to \cdot} (\mu_t)$. Define
  \begin{align*}
    D_t \defn \dirichlet_{P_t}(f, f).
  \end{align*}
  Then $(D_t)_{t \geq 0}$ is a super-martingale.
\end{lemma}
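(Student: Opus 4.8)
The plan is to show that $D_t = \dirichlet_{P_t}(f,f)$ is a supermartingale by expressing the Dirichlet form of the hit-and-run kernel as an integral of a "local" functional of the measure $\mu_t$, and then applying Itô's formula using the SDE~\eqref{eq:mu_t_sde} that governs $\mu_t$. The starting observation is that, by the definition~\eqref{eq:hit-and-run_transition} of $P_{u\to\cdot}(\nu)$, the Dirichlet form $\dirichlet_{P_t}(f,f)$ can be written as a double integral over pairs of points in $\real^n$, with integrand of the form $(f(x)-f(u))^2 \cdot \nu(x)\nu(u) / (\nu(\ell_{ux})|x-u|^{n-1})$ times a constant. Crucially, for the hit-and-run chain this functional depends on $\nu$ only through ratios $\nu(x)/\nu(\ell_{ux})$ and $\nu(u)/\nu(\ell_{ux})$, i.e. through the conditional distribution of $\mu_t$ along each line $\ell$ — a "subspace localization" structure. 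Since along any fixed line $\ell$ the tilt $\exp(c_t^\top x - \tfrac12\langle B_t x,x\rangle)$ only changes the conditional density on $\ell$ in a controlled (log-quadratic) way, the conditional law of $\mu_t$ on $\ell$ evolves as a one-dimensional stochastic localization / subspace-localization process. This is exactly the abstract setup of \cite[Section 2.1]{chen2022localization}, and \cite[Proposition 48]{chen2022localization} asserts that the Dirichlet-form functional associated with such a scheme is a supermartingale. So the bulk of the proof is a verification that the hit-and-run chain really is the Markov chain associated to this scheme, plus a citation.

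Concretely, the steps I would carry out are: (i) Recall/record the general framework of \cite{chen2022localization}: a localization scheme produces a martingale of measures $(\mu_t)$, each "localization step" decomposes $\mu_t$ along a random subspace (here: a uniformly random line direction), and there is an associated Markov kernel whose Dirichlet form is shown in \cite[Prop.~48]{chen2022localization} to be a supermartingale along the scheme. (ii) Identify the line-localization: given $\mu_t$, pick a uniform direction $\theta$ and condition on which line $\ell$ (in direction $\theta$) the point lies on; the hit-and-run step then resamples uniformly from $\mu_t$ restricted to $\ell$. Check that the resulting transition kernel is exactly $P_{\cdot\to\cdot}(\mu_t)$ as in~\eqref{eq:hit-and-run_transition} — this is the content of the remark already made in the text that "the hit-and-run chain is the Markov chain associated to the subspace-localization scheme". (iii) Check the compatibility condition: the stochastic localization process~\eqref{eq:def_mu_t} with $C_t = \Ind_n$ is a valid localization scheme in the sense of \cite{chen2022localization} whose time-$t$ measures are those appearing in the claimed statement; in particular the martingale property~\eqref{eq:mu_t_sde} of $(\mu_t)$ is what feeds into the Itô computation behind \cite[Prop.~48]{chen2022localization}. (iv) Invoke \cite[Prop.~48]{chen2022localization} to conclude $D_t$ is a supermartingale.

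The main obstacle — and the step deserving the most care — is step (ii)/(iii): making precise the sense in which "hit-and-run = Markov chain of the subspace-localization scheme applied to stochastic localization". The subtlety is that the scheme in \cite{chen2022localization} is phrased for a specific family of subspace decompositions, and one must check that picking a uniform \emph{direction} and conditioning on the line is an instance of it, and that the Gaussian tilt applied by the SL process commutes appropriately with this conditioning (it does, because restricting a log-quadratic tilt to a line is again a log-quadratic tilt, so conditioning on $\ell$ and tilting can be interchanged). One must also confirm that the hypotheses of \cite[Prop.~48]{chen2022localization} — essentially that $(\mu_t)$ is a measure-valued martingale satisfying the right SDE and that $f \in \mathcal{L}_2(\mu)$ — are met here; both are immediate from~\eqref{eq:mu_t_sde} and~\eqref{eq:COV-bl}. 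Once the dictionary between the two settings is fixed, the supermartingale property is not re-derived but imported wholesale, so the remaining work is purely bookkeeping. (A self-contained alternative would be to write $D_t$ explicitly via~\eqref{eq:hit-and-run_transition}, apply Itô to the integrand using $d\mu_t(x) = (x-\bb(\mu_t))^\top dW_t\,\mu_t(x)$, and check that the drift term is $\le 0$ after using the convexity of $z\mapsto z^2$ and Jensen along each line; but going through \cite[Prop.~48]{chen2022localization} avoids this computation.)
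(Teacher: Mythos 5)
Your proposal matches the paper's treatment: the paper does not give a self-contained proof either, but simply cites \cite[Proposition 48]{chen2022localization} together with the observation that hit-and-run is the Markov chain associated to the subspace-localization scheme of \cite[Section 2.1]{chen2022localization}, which is exactly your steps (i)--(iv). Your added discussion of why the Gaussian tilt commutes with conditioning on a line, and the sketched Itô alternative, go slightly beyond what the paper records but are consistent with it.
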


The next lemma, which corresponds to the second step described above, gives the conductance for the hit-and-run on the ``transformed'' density $\mu_T$.
\begin{lemma}
  \label{lem:conductance_transformed_density}
  There exists a universal constant $c>0$ such that the following holds true. Let $\nu_{\beta, n}$ be a probability measure defined as a truncated Gaussian on a convex set, given by the formula
  \begin{align*}
    \nu_{\beta, n}(x) \propto e^{-\frac{n}{2} \abss{x-\beta}^2 } \mathbf{1}_K.
  \end{align*}
  Define $\Upsilon\defn \braces{u \in K\mid \abss{u-\beta} \in \brackets{\frac{1}{\sqrt{2}} , \sqrt{2}  } }$ and $\delta \defn 1 - \nu_{\beta, n}(\Upsilon)$. 
  Suppose $S_1 \cup S_2$ is a partition of $K$ and let $0 < r \leq \frac{1}{16\sqrt{n}}$. Then we have
  \begin{align*}
    \int_{S_1} P_{u \to S_2}(\nu_{\beta, n}) d\nu_{\beta, n}(x) \geq \frac{r^2 \sqrt{n}} { c } \brackets{\nu_{\beta, n}(S_1 \cap K_r) \cdot \nu_{\beta, n}(S_2 \cap K_r)  - 8 \parenth{1 + \frac{32}{r} }\delta }.
  \end{align*}
\end{lemma}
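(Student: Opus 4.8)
The plan is to run the Lov\'asz--Simonovits conductance argument for the hit-and-run chain with stationary measure $\nu_{\beta,n}$, exploiting that $\nu_{\beta,n}$ is a truncated Gaussian: it is $n$-uniformly log-concave, hence has an isoperimetric inequality with constant of order $\sqrt n$ with no KLS-type input, and on the bulk $K_r\cap\Upsilon$ one step of the chain is easy to control. Throughout, write $J:=\int_{S_1}P_{u\to S_2}(\nu_{\beta,n})\,d\nu_{\beta,n}$ for the quantity to be bounded below; by reversibility $J=\int_{S_2}P_{u\to S_1}(\nu_{\beta,n})\,d\nu_{\beta,n}=\dirichlet_P(\mathbf 1_{S_1},\mathbf 1_{S_1})$.

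\smallskip
\noindent\textit{Step 1: a one-step overlap lemma.} The core is to show there are a universal $c_0>0$ and a radius $\rho$ of order $r^2$ (a deliberately conservative choice that suffices) such that $u,v\in K_r\cap\Upsilon$ with $\abss{u-v}\le\rho$ imply $\tvdis\bigl(P_{u\to\cdot}(\nu_{\beta,n}),P_{v\to\cdot}(\nu_{\beta,n})\bigr)\le 1-c_0$. By \eqref{eq:hit-and-run_transition},
\[
  \int \min\bigl(P_{u\to dz}(\nu_{\beta,n}),\,P_{v\to dz}(\nu_{\beta,n})\bigr)\ =\ \frac{2}{n\pi_n}\int \frac{\nu_{\beta,n}(z)\,dz}{\max\bigl(\nu_{\beta,n}(\ell_{uz})\abss{u-z}^{n-1},\ \nu_{\beta,n}(\ell_{vz})\abss{v-z}^{n-1}\bigr)},
\]
so it suffices to exhibit a target set $G$ on which $\nu_{\beta,n}(\ell_{uz})\abss{u-z}^{n-1}$ and $\nu_{\beta,n}(\ell_{vz})\abss{v-z}^{n-1}$ agree up to a universal factor while $P_{u\to G^c}(\nu_{\beta,n})$ is a small constant. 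Three effects are excluded from $G$. (i) Targets too near $u$ or $v$, where the polar Jacobian ratio $\abss{u-z}^{n-1}/\abss{v-z}^{n-1}$ is uncontrolled: but $P_{u\to B(u,R)}(\nu_{\beta,n})$ is at most a constant times $R$ over the local one-dimensional window of $\nu_{\beta,n}$ along a random line through $u$, a window of order $r$ or larger on all but a $\tfrac1{64}$-fraction of directions (by the definition of $K_r$), so discarding a ball of radius a small multiple of $r$ around each of $u,v$ is cheap, and outside it $\abss{u-z}$ and $\abss{v-z}$ differ by at most $\abss{u-v}$, typically of order $\abss{u-v}/\sqrt n$ along the step, making $(\abss{u-z}/\abss{v-z})^{n-1}=O(1)$ once $\rho$ is a small enough multiple of $r^2$. (ii) Directions along which $\ell_{u\theta}$ leaves $K$ pathologically close to $u$: probability at most $1-\lambda(u,2r)\le\tfrac1{64}$, and likewise at $v$ since $v\in K_r$. (iii) The Gaussian weights: $u,v\in\Upsilon$ pins $\abss{u-\beta},\abss{v-\beta}$ to $[\tfrac1{\sqrt 2},\sqrt 2]$, so with $\abss{u-v}\le\rho$ and $n\rho=O(1)$ the ratios $\nu_{\beta,n}(u)/\nu_{\beta,n}(v)$, $\nu_{\beta,n}(\ell_{uz})/\nu_{\beta,n}(\ell_{vz})$, and the displacement $-\langle u-\beta,\theta\rangle$ of the one-dimensional Gaussian along a generic direction all stay within universal factors of their $v$-counterparts. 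Combining the three bounds on $P_{u\to G^c}$ yields $c_0$.

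\smallskip
\noindent\textit{Steps 2--3: conductance dichotomy, isoperimetry, and error bookkeeping.} With $S_i^{\rm deep}:=\{u\in S_i:P_{u\to S_{3-i}}(\nu_{\beta,n})<c_0/2\}$ and $S_3:=K\setminus S_1^{\rm deep}\setminus S_2^{\rm deep}$, reversibility gives $J\ge\tfrac{c_0}{4}\nu_{\beta,n}(S_3)$ and Markov's inequality gives $\nu_{\beta,n}(S_i\setminus S_i^{\rm deep})\le\tfrac2{c_0}J$. For $u\in S_1^{\rm deep}\cap K_r\cap\Upsilon$ and $v\in S_2^{\rm deep}\cap K_r\cap\Upsilon$ one has $\tvdis(P_{u\to\cdot},P_{v\to\cdot})\ge P_{u\to S_1}-P_{v\to S_1}\ge1-c_0$, so Step 1 forces $\abss{u-v}>\rho$: the sets $B_i:=S_i^{\rm deep}\cap K_r\cap\Upsilon$ are $\rho$-separated. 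Since $-\nabla^2\log\nu_{\beta,n}\succeq n\,\Id$ (a quadratic plus the convex function $-\log\mathbf 1_K$), the isoperimetric profile of $\nu_{\beta,n}$ dominates that of $\Normal(0,\tfrac1n\Id)$, so, using $\rho\sqrt n=O(1)$ (from $r\le\tfrac1{16\sqrt n}$ and $\rho$ of order $r^2$), $\nu_{\beta,n}(B_3)\gtrsim\rho\sqrt n\,\nu_{\beta,n}(B_1)\nu_{\beta,n}(B_2)\asymp r^2\sqrt n\,\nu_{\beta,n}(B_1)\nu_{\beta,n}(B_2)$ (equivalently via a one-dimensional needle decomposition into $n$-uniformly log-concave conditionals on intervals, where this is an explicit estimate). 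Finally pass back from $B_i$ to $S_i\cap K_r$ using $\nu_{\beta,n}(B_i)\ge\nu_{\beta,n}(S_i\cap K_r)-\tfrac2{c_0}J-\delta$ and $\nu_{\beta,n}(S_3)\ge\nu_{\beta,n}(B_3)-\nu_{\beta,n}(K\setminus K_r)-\delta$, bounding $\nu_{\beta,n}(K\setminus K_r)$ (a $2r$-boundary layer of $K$, which for this truncated Gaussian carries mass only where the Gaussian is itself cut off) by $O(\delta/r)$; a last dichotomy --- either $\tfrac2{c_0}J$ already dominates a fixed fraction of $\min(\nu_{\beta,n}(S_1\cap K_r),\nu_{\beta,n}(S_2\cap K_r))$, in which case $J$ exceeds the claimed bound outright since $r^2\sqrt n\le\tfrac1{256}$, or $\tfrac2{c_0}J$ is negligible against these measures and the product above survives up to the corrections --- delivers $J\ge\tfrac{r^2\sqrt n}{c}\bigl[\nu_{\beta,n}(S_1\cap K_r)\nu_{\beta,n}(S_2\cap K_r)-8(1+\tfrac{32}{r})\delta\bigr]$.

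\smallskip
\noindent\textit{Main obstacle.} Everything outside Step 1 is standard Lov\'asz--Simonovits bookkeeping or the black-box isoperimetry of a uniformly log-concave measure; the genuine work is the one-step overlap lemma, where the polar Jacobian $\abss{u-z}^{n-1}$ (which is what drags the admissible range of $\abss{u-v}$ down to order $r^2$ rather than order $r$), the bad directions handled through $K_r$, and the Gaussian weights handled through $\Upsilon$ must all be controlled simultaneously while keeping $c_0$ a true universal constant. A secondary difficulty is the error accounting in Step 3: one must show that every quantity which is only small on average --- the probability of leaving $\Upsilon$, the non-deep mass, and above all the boundary-layer mass $\nu_{\beta,n}(K\setminus K_r)$ --- collapses into the single error term $8(1+32/r)\delta$ with exactly that $r$-dependence.
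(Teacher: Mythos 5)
Your overall skeleton (deep/shallow dichotomy, reversibility, separation of the deep sets, isoperimetry of the $n$-strongly log-concave measure, and the final bookkeeping) matches the paper's, but your Step 1 claims something strictly stronger than what the paper proves, and your sketch of it has a hole at exactly the hard point. You assert a \emph{universal-constant} overlap $c_0$ for $u,v\in K_r\cap\Upsilon$ with $\abss{u-v}\lesssim r^2$. The paper's overlap lemma only gives $\tvdis(P_{u\to\cdot},P_{v\to\cdot})\leq 1-C\min\braces{\sqrt{n}F_u,1}$, i.e.\ overlap of order $\sqrt{n}\,r$, which can be far from constant, and the reason is a term your three exclusions never touch: the ratio of chord masses $\nu(\ell_{vz})/\nu(\ell_{uz})$ appearing in the transition density \eqref{eq:hit-and-run_transition}. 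Membership in $K_r$ only guarantees that the chord through $u$ contains a segment of length about $2r$ around $u$ for most directions; beyond that the boundary of $K$ is uncontrolled, so $\nu(\ell_{uz})$ can be as small as order $r$ times the peak density while $\nu(\ell_{vz})$, along an almost identical line, can be as large as the full Gaussian mass of order $1/\sqrt{n}$. This makes $P_{v\to dz}/P_{u\to dz}$ as small as order $\sqrt{n}\,r$ on targets carrying most of $P_{u\to\cdot}$'s mass, and shrinking $\abss{u-v}$ to $r^2$ does not obviously repair it, since it is a boundary-regularity issue rather than a continuity-in-$u$ issue with a uniform modulus. Your item (ii) excludes directions where the chord from $u$ exits \emph{too close} to $u$, which is the opposite concern. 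Without a quantitative bound on this chord ratio (the paper's $Q_2$ term, Eq.~\eqref{eq:R_2_bound}) the overlap lemma is unproven, and if one only gets overlap $\asymp\sqrt{n}r$ then the deep-set threshold must be set at $\asymp\sqrt{n}r$ rather than $c_0/2$, which is precisely how the paper's proof is arranged (with $\varrho=\tfrac{r\sqrt{n}}{64C}$ and separation $\Delta=\tfrac{r}{32\sqrt{n}}$, whose product again yields $r^2\sqrt{n}$).

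There is a secondary gap in your Step 3: you bound $\nu_{\beta,n}(K\setminus K_r)$ by $O(\delta/r)$, but $\delta$ measures the mass outside the spherical shell $\Upsilon$, which has nothing to do with the boundary layer $K\setminus K_r$; a point can easily lie in $\Upsilon$ yet outside $K_r$, and no bound on $\nu_{\beta,n}(K\setminus K_r)$ in terms of $\delta$ is available (nor is one needed). The paper avoids this entirely by applying the isoperimetric inequality not to $\nu_{\beta,n}$ on $K$ but to $\nu_\dagger$, the normalized restriction of $\nu_{\beta,n}$ to $K_r$ --- still $n$-strongly log-concave because $K_r$ is convex --- so that the ``middle'' set of the partition is $K_r\setminus((S_1'\cup S_2')\cap\Upsilon)$, whose measure is controlled by the non-deep mass inside $K_r$ plus a single $\delta$. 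You should restructure your error accounting the same way; as written, the term $\nu_{\beta,n}(K\setminus K_r)$ cannot be absorbed into $8(1+\tfrac{32}{r})\delta$.
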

Recall from Eq.~\eqref{eq:def_mu_t_id} that $\mu_t$ is uniquely determined by the vector $c_t$, and is given by the formula $\mu_t = \nu_{c_t/t, t}$. The next lemma shows that at time $T = n$, the set $\Upsilon$ defined in Lemma~\ref{lem:conductance_transformed_density} has large measure with high probability. 

\begin{lemma}
  \label{lem:distance_pt_to_center}
  Let $(\mu_t)_t$ be the process defined above and let $(c_t)_t$ be the corresponding process which appears in Eq.~\eqref{eq:def_mu_t}, then at time $T=n$, 
  \begin{enumerate}
    \item[(i)] the random vector $c_n/n$ has the law $\mu * \Normal(0, \frac{1}{n} \Ind_n)$.
    \item[(ii)] there exists an event $\mathfrak{E} \subseteq \real^n$ with measure at least $1 - 2e^{-\frac{n}{32}}$ under the law of $c_n/n$, such that for $v \in \mathfrak{E}$, we have
    \begin{align*}
      \Prob_{\mathbf{x} \sim \nu_{v, n}}\parenth{ \frac{\sqrt2}{2} <\abss{\mathbf{x} - v} < \sqrt{2}} \geq 1 - e^{-\frac{n}{32}}.
    \end{align*}
  \end{enumerate}
\end{lemma}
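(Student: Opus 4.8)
\emph{Part (i): the law of $c_n/n$.} This part is a structural fact about stochastic localization, and the plan is to use its well-known filtering interpretation. With the control $C_t\equiv\Ind_n$, \eqref{eq:def_mu_t} gives $B_t=t\Ind_n$, so that $\mu_t=\nu_{c_t/t,t}$ and $dc_t=dW_t+\bb(\mu_t)\,dt$. I would observe that this is exactly the filtering system of the Bayesian model in which $X\sim\mu$ is observed through the Gaussian channel $Y_t:=tX+W'_t$, with $W'$ an independent Brownian motion: by the Kallianpur--Striebel formula the conditional law of $X$ given $(Y_s)_{s\le t}$ is proportional to $\mu(x)\exp\!\big(x\tp Y_t-\tfrac t2|x|^2\big)$, hence equals $\nu_{Y_t/t,\,t}$, while the innovation process $Y_t-\int_0^t\EE[X\mid\mathcal{F}_s]\,ds$ is a Brownian motion, so $(Y_t)$ solves the same SDE as $(c_t)$. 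By uniqueness of strong solutions, recalled in Section~\ref{sub:stochastic_localization}, the two processes have the same law; in particular $c_n/n\stackrel{d}{=}X+W'_n/n$, and since $W'_n/n\sim\Normal(0,\tfrac1n\Ind_n)$ is independent of $X\sim\mu$, this is precisely $\mu*\Normal(0,\tfrac1n\Ind_n)$. (If one prefers to avoid citing filtering, the same identity follows by checking directly that $v\mapsto\tfrac1{\vol(K)}\int_K\gamma(v-x)\,dx$ solves the Fokker--Planck equation of the diffusion $dc_t=dW_t+\nabla_c\log Z(t,c_t)\,dt$, using $\bb(\mu_t)=\nabla_c\log Z(t,c_t)$; here $\gamma$ is the density of $\Normal(0,\tfrac1n\Ind_n)$.)

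\emph{Part (ii): the concentration bound.} Write $\gamma$ for the $\Normal(0,\tfrac1n\Ind_n)$ density and set $A_v:=\{x:\ |x-v|\le\tfrac1{\sqrt2}\ \text{or}\ |x-v|\ge\sqrt2\}$, so that $A_v^c$ is exactly the set appearing in the statement and it suffices to show $\Prob_{v\sim c_n/n}(\nu_{v,n}(A_v)>e^{-n/32})\le 2e^{-n/32}$. The plan is to reduce this to a one-dimensional Gaussian tail bound by exploiting part (i): the density of $c_n/n$ at $v$ is $q(v)=\tfrac1{\vol(K)}\int_K\gamma(x-v)\,dx$, so that $\vol(K)\,q(v)=\int_K\gamma(x-v)\,dx$ is precisely the normalization in $\nu_{v,n}$ and hence $\nu_{v,n}(A_v)=\tfrac{1}{\vol(K)q(v)}\int_{A_v\cap K}\gamma(x-v)\,dx$. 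Consequently, on the bad event $\{\nu_{v,n}(A_v)>e^{-n/32}\}$ one has $q(v)<\tfrac{e^{n/32}}{\vol(K)}\int_{A_v\cap K}\gamma(x-v)\,dx$; integrating this over the bad event, enlarging the domain of integration in $v$ to $\RR^n$, and using Fubini gives
\[
\Prob_{v\sim c_n/n}\!\big(\nu_{v,n}(A_v)>e^{-n/32}\big)\ \le\ \frac{e^{n/32}}{\vol(K)}\int_K\Big(\int_{\RR^n}\mathbf{1}_{A_v}(x)\,\gamma(x-v)\,dv\Big)dx .
\]
Since the relation defining $A_v$ is symmetric in $x$ and $v$, the inner integral equals $\Prob_{W\sim\Normal(0,\frac1n\Ind_n)}(|W|\le\tfrac1{\sqrt2}\ \text{or}\ |W|\ge\sqrt2)$, a one-dimensional quantity not depending on $x$. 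I would bound it with a $\chi^2_n$ tail estimate: since $n|W|^2\sim\chi^2_n$, the Laurent--Massart bounds give $\Prob(\chi^2_n\le n/2)\le e^{-n/16}$ and $\Prob(\chi^2_n\ge 2n)\le e^{-n/16}$, so the inner integral is $\le 2e^{-n/16}$ and the whole probability is $\le e^{n/32}\cdot 2e^{-n/16}=2e^{-n/32}$. Taking $\mathfrak{E}$ to be the complement of the bad event then gives (ii), since for $v\in\mathfrak{E}$ we have $\nu_{v,n}(A_v^c)=1-\nu_{v,n}(A_v)\ge 1-e^{-n/32}$.

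\emph{Main obstacle.} None of these steps is technically deep, but the idea that has to be spotted is the reweighting used in part (ii): a pointwise lower bound on $\Prob(\Normal(v,\tfrac1n\Ind_n)\in K)$ is genuinely delicate for $v$ near $\partial K$ (it can be exponentially small), yet it is unnecessary, because this probability is --- up to the constant $\vol(K)$ --- exactly the density against which we are integrating, so the offending factor cancels after integration and only a one-dimensional Gaussian estimate remains. I expect the only other point requiring some care to be a clean justification of the filtering identity in part (i), either by citation or by the short Fokker--Planck computation indicated above.
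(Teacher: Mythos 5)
Your proposal is correct and follows essentially the same route as the paper. For part (i) the paper simply cites the result (Theorem~2 in \cite{el2022information}) that you re-derive via the Kallianpur--Striebel/filtering interpretation, and for part (ii) your explicit density-plus-Fubini computation is the unpacked version of the paper's argument, which conditions on $c_n$ (so that $\nu_{c_n/n,n}$ is the conditional law of $X$ given $c_n$, hence $\mathbf{x}-c_n/n \stackrel{d}{=} -Z$ conditionally) and then applies what amounts to Markov's inequality to the random variable $\Prob\bigl(|Z|\notin(\tfrac{1}{\sqrt2},\sqrt2)\mid c_n\bigr)$; both hinge on the same chi-square tail bound from Laurent--Massart and give the identical exponents.
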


\begin{proof}[Proof of Theorem~\ref{thm:hitandrun_mixing_isotropic_constant_warm}]
  Fix the following parameters
  \begin{align}
    \label{eq:main_thm_param_choices}
    s &= \frac{\epsilon}{2M}, \notag \\
    \zeta &= s^2/\sqrt{\log(10^4/s)}/10^8,  \notag \\
    r &= \frac{\zeta}{200 \sqrt{n}}, \notag \\
    T &= n.
  \end{align}
  Let $(\mu_t)_t$ be the stochastic localization process applied to the measure $\mu$. As highlighted above, we provide a lower bound for the $s$-conductance of $\mu$ in terms of that of $\mu_T$, and then derive a lower bound for the $s$-conductance of $\mu_T$.

  According to Theorem 4.1 in~\cite{kannan1995isoperimetric}, for $K$ isotropic in $\real^n$, there exists a point $x \in K$ such that $\ball\parenth{x, 1} \subseteq K$.
  Together with Lemma~\ref{lem:K_r_size}, for the above choice of $r$, we obtain 
  \begin{align*}
    \mu(K_r) \geq 1 - \frac{\zeta}{100}.
  \end{align*}
  Let $S_1 \cup S_2 = K$ be a partition of $K$, with $s \leq \mu(S_1) \leq \frac{1}{2}$. Let $P_t = P_{\cdot \to \cdot }(\mu_t)$ be the hit-and-run transition kernel with the target density $\mu_t$.
  To lower bound the $s$-conductance, we need to lower bound
  \begin{align*}
    \dirichlet_{P_0}(\mathbf{1}_{S_1}, \mathbf{1}_{S_1}) = \int_{S_1} P_{x\to S_2}(\mu) d\mu(x).
  \end{align*}
  Define 
  \begin{align*}
    \mathfrak{E} = \braces{v \in \real^n \mid \Prob_{\mathbf{x} \sim \nu_{v, n}} \parenth{ \frac{\sqrt2}{2} <\abss{\mathbf{x} - v} < \sqrt{2}} \geq 1 - e^{-\frac{n}{32}} }.
  \end{align*}
  According to Lemma \ref{lem:distance_pt_to_center} we have
  \begin{equation} \label{eq:Ebig}
  \Prob\parenth{\frac{c_n}{n} \in \mathfrak{E}} \geq 1 - 2e^{-\tfrac{n}{32}}.
  \end{equation}
  Applying Lemma \ref{lem:conductance_transformed_density} with $r$ as above, $\beta = c_n / n$ and $\delta = e^{-n/32}$, we obtain
  \begin{align}
\int_{S_1} P_{u \to S_2}(\mu_n) d\mu_{n}(x) & \geq \frac{r^2 \sqrt{n}} { c''' } \parenth{\mu_n(S_1 \cap K_r) \cdot \mu_n(S_2 \cap K_r)  - 8 \parenth{1 + \frac{32}{r } } e^{-n/32} } \mathbf{1}_{\frac{c_n}{n} \in \mathfrak{E}}  \nonumber \\
& \geq \parenth{ \frac{c \zeta^2}{\sqrt{n}} \mu_n(S_1 \cap K_r) \cdot \mu_n(S_2 \cap K_r) - c' \sqrt{n} e^{-n/32}} \mathbf{1}_{\frac{c_n}{n} \in \mathfrak{E}}. \label{eq:condbound}
  \end{align}
  We have
  \begin{align*}
    &\quad \int_{S_1} P_{x\to S_2}(\mu) d\mu(x) \\
    &\overset{(i)}{\geq} \Exs\brackets{\int_{S_1} P_{x\to S_2}(\mu_n) d\mu_n(x)} \\
    &\overset{ \eqref{eq:condbound} }{\geq} \Exs \brackets{ \parenth{ \frac{c \zeta^2}{\sqrt{n}} \mu_n(S_1 \cap K_r) \cdot \mu_n(S_2 \cap K_r) - c' \sqrt{n} e^{-n/32}} \mathbf{1}_{\frac{c_n}{n} \in \mathfrak{E}} } \\
    &\overset{(ii)}{\geq}   \frac{c \zeta^2}{\sqrt{n}} \parenth{\frac{ c'' s^{3/2}}{\log \frac1s \sqrt{n \log (n) \kappa_n^2}}  - 2 e^{-n/32}}  \cdot \mu(S_1 \cap K_r) \cdot \parenth{\mu(K_r) - \mu(S_1 \cap K_r)} - c' \sqrt{n} e^{-n/32} \\
    &\overset{(iv)}{\geq} C \cdot \frac{s^{5.5}}{n \log^{2} (\frac1s) \kappa_n\sqrt{ \log(n)}} \brackets{\min\braces{\mu(S_1), \mu(S_1^c)} - s/2},
  \end{align*}
  where $c, c' c'', c''', C$ are all universal constants. Here (i) follows from Lemma~\ref{lem:dirichlet_form_super_martingale}, which claims that $\parenth{\dirichlet_{P_t}(\mathbf{1}_{S_1}, \mathbf{1}_{S_1})}_{t \geq 0}$ is a super-martingale.  (ii) follows from Lemma~\ref{lem:approximate_conserv_variance_overall} and Eq.~\eqref{eq:Ebig}. (iv) follows because there exists a constant $c>1$ such that for $n \geq c \log(\frac{M}{\epsilon})$, meaning that $\sqrt{n \log (n )} \kappa_n e^{-n/32} \ll s^2$. The above calculation establishes a lower bound on the $s$-conductance
  \begin{align*}
    \Phi_s \geq C \frac{s^{5.5}}{n \log^{2} (\frac1s) \kappa_n\sqrt{ \log(n)}}. 
  \end{align*}
  According to~\cite{klartag2022bourgain}, $\frac{1}{\psi_n^2} \leq C \log(n) \kappa_n^2$.
  We conclude with Lemma~\ref{lem:lovasz_lemma}.
\end{proof}
\paragraph{Overview of the rest of the proof}
In the rest of the proof, in Section~\ref{sec:approx_conserv} we prove Lemma~\ref{lem:approximate_conserv_variance_overall}, which shows the approximate conservation of variance of any indicator function. In Section~\ref{sec:conductance_for_transformed} we prove Lemma~\ref{lem:conductance_transformed_density} which shows the conductance of the hit-and-run on the transformed density, and Lemma~\ref{lem:distance_pt_to_center} which shows that with high probability most of the mass of the transformed density is concentrated in a shell.

\section{Approximate conservation of variance}
\label{sec:approx_conserv}
The goal of this section is to prove Lemma \ref{lem:approximate_conserv_variance_overall}. To this end, we fix a set $E \subset \RR^n$; our aim is to show that we non-negligible probability we have both that $\mu_T(E)$ is bounded away from $0$ and $1$ and that $\mu_T(K_r)$ is close to $1$.

We have not been able to show this directly with respect to the SL process with the choice $C_t = \Ind_n$. Instead, we consider a different choice of driving matrix $C_t$ and stopping time (described below) which on one hand makes the analysis more tractable and on the other hand has the resulting distribution of the random measure being the same as that of $\mu_T$. 

\subsection{Construction of the three-stage process} \label{def:the_3_stage_SL}
The driving matrix $C_t$ is chosen so that the process has three different stages, as follows.

  Given $\mu \sim \mathbf{1}_{K}$, $E \subset \real^n$ such that $\mu(E) = \xi \in (0, 1/2]$, we run three stages of SL to obtain $(\hmu_t)_{t\geq0}$ as follows
  \begin{itemize}
    \item \textbf{Stage 1:} Starting from $\mu$, run SL with $C_t=I_n$ from time $0$ to time $T_1 \defn \frac{\xi}{\kappa_n^2\log(n)}$ to obtain $(\hat \mu_t)_{t \in [0,T_1]}$.
    \item \textbf{Stage 2:} Starting from $\hmu_{T_1}$, we choose a driving matrix $C_t$ which satisfies the following. There is a stopping time $T_2$ so that:
    \begin{enumerate}
        \item One has $ \hat \mu_{T_1}(E) = \hat \mu_{T_2}(E)$ almost surely.
        \item The matrix $n\Ind_n - \int_{0}^{T_2} C_t^2 dt$ is positive definite and its rank is almost surely at most $1$.
    \end{enumerate}    
    \item \textbf{Stage 3:} Starting from $\hmu_{T_2}$, and defining $n\Ind_n - \int_{0}^{T_2} C_t^2 dt = \lambda_1 \theta \theta \tp$ where $|\theta| = 1$, run SL with $C_t = \theta \theta \tp$ for a time period $n - \lambda_1$. 
\end{itemize}

Note that the driving matrix in Stage 2 is defined only implicitly. The exact choice of driving matrix which satisfies the two conditions of this stage is of no consequence for the rest of the proof, rather it is only important to establish to existence of such a choice. Roughly speaking these two conditions can be obtained by choosing $C_t= \text{Proj}_{H_t \cap v_t^\perp}$, where $v_t = \int_E (x- \bb(\hmu_t)) d\hmu_t(x)$ and $H_t$ is the image of the matrix $n\Ind_n - \int_0^t C_s^2 ds$. We refer the reader to \cite[Lemma 2]{eldan2021spectral} for the exact construction.

Observe that it follows from the definition of the process that, at the end of Stage $3$, one has almost surely 
\begin{equation} \label{eq:SLequiv}
\int_0^{T_3} C_t^2 dt = n \Ind_n.    
\end{equation}
Since $\hmu_{t}$ is a martingale and $T_3$ is a stopping time, applying the optional stopping theorem yields
\begin{align} \label{eq:marT3}
  \Exs[\hmu_{T_3}(x)] = \mu(x), ~~ \forall x \in \real^n.
\end{align}
According to formula \ref{eq:def_mu_t_id}, there exists a random variable $\hy_{T_3}$ on $\real^n$ such that $\hmu_{T_3}$ takes the form
\begin{align*}
  \hmu_{T_3}(x)  = \nu_{\hy_{T_3}, n}(x) = \frac{1}{Z_{T_3} (\hy_{T_3})} \exp\parenth{ - \frac{n}{2} \abss{x - \hy_{T_3}}^2 } \mu(x), ~~~ \forall x \in \real^n.
\end{align*}
where $Z_{T_3} (\hy_{T_3})$ is the normalizing constant. 
Letting $p_{\hy_{T_3}}$ be the distribution of $\hy_{T_3}$, Eq.~\eqref{eq:marT3} implies that
\begin{align*}
  \int_{\real^n} \nu_{y, n}(x) p_{\hy_{T_3}}(dy) = \mu(x), ~~ \forall x \in \real^n.
\end{align*}
On the other hand, applying the same argument for $\mu_T$ which was obtained via the SL process with driving matrix $C_t = \Ind_n$, there exists a random variable $\by_n$ such that, almost surely
$$
\mu_T(x) = \frac{1}{Z(\by_{n})} \exp\parenth{ - \frac{n}{2} \abss{x - \by_{n}}^2 } \mu(x) = \nu_{\by_{n}, n}(x), ~~ \forall x \in \real^n.
$$
Denoting the law of $\by_n$ by $p_{\by}$, the martingale property yields
\begin{align*}
  \int_{\real^n} \nu_{y, n}(x) p_{\by}(dy) = \mu(x), ~~ \forall x \in \real^n.
\end{align*}
We conclude that
\begin{equation} \label{eq:processes_are_same}
\int_{\real^n} \nu_{y, n}(x) p_{\hy_{T_3}}(dy) = \int_{\real^n} \nu_{y, n}(x) p_{\by}(dy) = \mu(x), ~~ \forall x \in \real^n.    
\end{equation}
The following lemma shows that $p_{\hy_{T_3}} = p_{\by}$, which implies that $\mu_T$ and $\hat \mu_{T_3}$ have the same distribution. Its proof is provided in Subsection~\ref{sub:identify_3_stage_SL}.
\begin{lemma}
  \label{lem:identification_of_marginal_on_Y}
  Given $\mu(x) \sim \mathbf{1}_K$ the uniform distribution on a bounded convex set $K \subset \real^n$. If there exists a density $p$ on $\real^n$, such that 
  \begin{align*}
    \mu(x) = \int \nu_{y, n}(x) p(y) dy, \forall x \in \real^n,
  \end{align*}
  then $p$ is uniquely defined almost everywhere. 
\end{lemma}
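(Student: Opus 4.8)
The statement asserts that the representation of the uniform measure $\mu \propto \mathbf{1}_K$ as a mixture of unit-variance Gaussians truncated to $K$ is unique. The plan is to reduce this to the injectivity of a suitable integral transform. Writing $\nu_{y,n}(x) = \frac{1}{Z_n(y)} e^{-\frac n2 |x-y|^2}\mathbf 1_K(x)$, the hypothesis $\mu(x) = \int \nu_{y,n}(x)\, p(y)\, dy$ says that for a.e.\ $x \in K$,
\begin{align*}
  \mathbf 1_K(x) \cdot \mathrm{const} = \int_{\real^n} \frac{e^{-\frac n2 |x-y|^2}}{Z_n(y)} p(y)\, dy.
\end{align*}
First I would observe that the left side (up to the fixed normalizing constant of $\mu$) is prescribed on $K$, and that both sides, viewed as functions of $x \in \real^n$, are of the form $e^{-\frac n2|x|^2}$ times a real-analytic function (an integral of $e^{n\langle x,y\rangle}$ against the finite measure $e^{-\frac n2|y|^2} p(y)/Z_n(y)\, dy$, which converges since $K$ — hence the relevant $y$-support, modulo an argument that $p$ is supported where $Z_n(y)$ does not blow up — is bounded; more carefully, one uses that $\nu_{y,n}$ is a probability density so the mixture is automatically a probability density and $\int p = 1$). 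Hence I would first argue that without loss of generality $p$ has bounded support: since $\nu_{y,n}(x)=0$ for $x\notin K$ and $Z_n(y)$ decays like $e^{-\frac n2 \mathrm{dist}(y,K)^2}$ up to polynomial factors, the constraint forces the mixing measure to be concentrated near $K$ — but in fact the cleanest route is to not restrict support and instead exploit analyticity directly.

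The core step is the following uniqueness claim: if $q$ is a finite signed measure on $\real^n$ with $\int e^{-\frac n2|x-y|^2}\, q(dy) = 0$ for all $x$ in an open subset $U\subseteq K$, then $q = 0$. This I would prove by noting $\int e^{-\frac n2|x-y|^2} q(dy) = e^{-\frac n2|x|^2}\int e^{n\langle x,y\rangle} e^{-\frac n2 |y|^2} q(dy) =: e^{-\frac n2|x|^2}\, F(x)$, where $F$ is the (two-sided) Laplace transform of the finite signed measure $d\tilde q(y) = e^{-\frac n2|y|^2} q(dy)$. The function $F$ extends to an entire function on $\CC^n$ (finite total mass of $\tilde q$ and the Gaussian factor give absolute convergence for all complex arguments, after the support-reduction; if $p$ is only assumed to be a density, one first uses the constraint on all of $K$, which has nonempty interior, to deduce via the tail decay of $Z_n$ that $\tilde q$ indeed has a finite exponential moment of every order). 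Vanishing of $F$ on the nonempty open set $U$ forces $F\equiv 0$ by the identity theorem for real-analytic / holomorphic functions of several variables, and then injectivity of the Laplace transform on finite signed measures gives $\tilde q = 0$, hence $q = 0$.

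To finish: apply this with $q$ the difference of two candidate mixing measures $p\, dy$ and $p'\, dy$ (or of $p\,dy$ with itself under two representations — but the statement is self-contained: if two densities $p, p'$ both represent $\mu$, then $q = (p-p')\,dy$ satisfies the vanishing hypothesis on the interior of $K$), concluding $p = p'$ a.e., i.e.\ $p$ is uniquely determined. I expect the main obstacle to be the integrability/analyticity bookkeeping needed to justify that $\tilde q$ has an entire Laplace transform without presupposing compact support of $p$; the resolution is to first note that $Z_n(y) \ge c\, e^{-\frac n2\,\mathrm{dist}(y,K)^2}$ (in fact, for $y$ far from $K$, $\nu_{y,n}$ being a probability density forces $Z_n(y)$ to be exponentially small), so $1/Z_n(y)$ grows at most like $e^{\frac n2 \mathrm{dist}(y,K)^2}$ times a polynomial, which is dominated by the $e^{-\frac n2|y|^2}$ weight for a genuine probability density $p$ only after a more careful two-step argument — alternatively, and more robustly, one invokes the known fact (used elsewhere in this literature, cf.\ the construction of the stochastic localization process) that such a Gaussian-mixture representation of a log-concave measure is unique, which is exactly what \cite[Lemma 2]{eldan2021spectral} and related results encapsulate. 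Either way, once entire extension and Laplace injectivity are in hand, uniqueness is immediate.
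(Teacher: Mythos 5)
Your proposal is correct and follows essentially the same route as the paper: you reduce uniqueness to injectivity of the two-sided Laplace transform of the weighted difference $\bigl(p(y)-\tilde p(y)\bigr)e^{-\frac n2|y|^2}/Z_n(y)\,dy$, which is exactly what the paper does after the change of variables $w=e^{ty}$ converting the Laplace transform into a multidimensional Mellin transform (whose inversion theorem it then cites). The integrability concern you flag is resolved more simply than you suggest: since the mixture equals the finite constant $\mu(x)=1/\vol(K)$ for every $x$ in the nonempty open interior of $K$, the transform of each (positive) weighted mixing measure is automatically finite on that open set, which is all that the analyticity/injectivity argument requires.
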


Since the statement of Lemma \ref{lem:approximate_conserv_variance_overall} only depends on the random measure $\mu_T$ and is oblivious of the path leading to it, we can prove this lemma via the analysis of the process $(\hat \mu_t)_{t \in [0,T_3]}$. The rest of the proof therefore boils down to the next lemma, proven in Subsection~\ref{sub:approx_conserv_var_for_3stage_SL}.
\begin{lemma}
  \label{lem:approx_conserv_var_at_time_n}
  Suppose that $\mu = \mathbf{1}_K$ is isotropic and logconcave and that $\mu(K_r) \geq 1-\zeta/100$. Let $E \subset K_r$ satisfy $\mu(E) = \xi \in (0, 1/2]$ with $\zeta \leq \xi^2/\sqrt{\log(10^4/\xi)}/10^8$. Let $(\hat \mu_t)_{t}$ be the 3-stage process defined above. Then,
  \begin{align*}
    \Prob \parenth{\hat \mu_{T_3}(E)(\hat \mu_{T_3}(K_r) - \hat \mu_{T_3}(E)) \geq \frac{1}{16} \cdot \mu(E) (\mu(K_r) - \mu(E))} \geq \frac{c \xi^{3/2}}{\log (\frac1\xi) \sqrt{n \kappa_n^2 \log(n)}},
  \end{align*}
  for a universal constant $c>0$.
\end{lemma}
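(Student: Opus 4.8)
The plan is to track the quantity $\hat\mu_t(E)$ along the three-stage process and show that with the claimed probability it stays bounded away from $0$ (hence also from $1$, since $\hat\mu_t(E) \le \hat\mu_t(K_r) \le 1$ and $\hat\mu_t(K_r)$ stays close to $1$). By Eq.~\eqref{eq:mu_t_sde}, $t \mapsto \hat\mu_t(E)$ is a martingale with quadratic variation driven by $\int_E (x - \bb(\hat\mu_t)) d\hat\mu_t$; the point of the three-stage construction is to make this martingale easy to control. \textbf{Stage 1} runs ordinary stochastic localization ($C_t = \Ind_n$) for the short time $T_1 = \xi/(\kappa_n^2 \log n)$. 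Here I would use the $\kappa_n$-based bound on the growth rate of the covariance (via the definition of $\kappa_n$ controlling $\|\Exs_{\hat\mu_t}\langle X,\theta\rangle (X\otimes X)\|$) together with Eq.~\eqref{eq:COV-bl} to show that over $[0,T_1]$ the total variation displacement of $\hat\mu_t$ is small — specifically that $\hat\mu_{T_1}(E)$ stays within a constant factor of $\xi$ and $\hat\mu_{T_1}(K_r)$ stays $\ge 1 - \zeta/50$ — except on an event of probability $\le \xi^{3/2}/(\log(1/\xi)\sqrt{n\kappa_n^2\log n})$, say by a martingale maximal inequality (Doob / Freedman) applied to $\hat\mu_t(E)$ and $\hat\mu_t(K_r^c)$. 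The restriction $\zeta \le \xi^2/\sqrt{\log(10^4/\xi)}/10^8$ is exactly what is needed to absorb the error terms here.

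\textbf{Stage 2} is, by construction, a no-op for the quantity we care about: condition~1 of that stage says $\hat\mu_{T_1}(E) = \hat\mu_{T_2}(E)$ almost surely, and one also needs that $\hat\mu_{T_2}(K_r)$ hasn't deteriorated (this can be arranged, or bounded, since the Stage-2 driving matrix kills the $E$-direction and only moves mass along directions orthogonal to $v_t$, so $K_r$'s mass is a bounded martingale that one controls the same way). So after Stage 2 we still have $\hat\mu_{T_2}(E) \asymp \xi$ and $\hat\mu_{T_2}(K_r)$ close to $1$ with the desired probability. \textbf{Stage 3} runs SL in the single remaining direction $\theta$ for the remaining "time budget" $n - \lambda_1 \le n$, ensuring Eq.~\eqref{eq:SLequiv} holds so that $\hat\mu_{T_3}$ has the law of $\mu_T$. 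Since this is a one-dimensional localization, $\hat\mu_t(E)$ is a bounded martingale whose quadratic variation is controlled by a one-dimensional variance, bounded by $1/t$-type estimates; I would again apply a maximal inequality over the (length $\le n$) time interval to show $\hat\mu_{T_3}(E) \ge \frac14 \xi$, say, except on an event of probability $\le \tfrac12 \cdot (\text{target bound})$. Combining: with probability at least $c\xi^{3/2}/(\log(1/\xi)\sqrt{n\kappa_n^2\log n})$ we have simultaneously $\hat\mu_{T_3}(E) \ge \xi/4$ and $\hat\mu_{T_3}(K_r) \ge 1 - \xi/8$ (using $\mu(E) = \xi \le 1/2$ and $\mu(K_r) \ge 1 - \zeta/100 \ge 1 - \xi/8$), whence $\hat\mu_{T_3}(E)(\hat\mu_{T_3}(K_r) - \hat\mu_{T_3}(E)) \ge \tfrac14\xi \cdot (1 - \xi/8 - \xi/4 - \dots)$, which one checks is $\ge \tfrac1{16}\xi(1-\xi) \ge \tfrac1{16}\mu(E)(\mu(K_r)-\mu(E))$ after tracking constants.

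I would expect the main obstacle to be \textbf{Stage 3}: the remaining time budget is of order $n$, which is large, so the one-dimensional martingale $\hat\mu_t(E)$ has a lot of time to fluctuate, and a naive second-moment bound would only give a probability like $1/n$ rather than something surviving multiplication against the Stage-1 probability. The resolution should be that in one dimension, once the Gaussian factor $e^{-t|x-\beta|^2/2}$ has localized the measure (which happens after time $O(1)$ in the relevant direction, by the Brascamp–Lieb bound Eq.~\eqref{eq:COV-bl}), the quantity $\hat\mu_t(E)$ essentially freezes — the drift term $\int_E(x-\bb)\,d\hat\mu_t$ becomes exponentially small — so the bulk of the $n$-length interval contributes negligible quadratic variation and one only pays for the initial $O(1)$ window. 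Making this "freezing" quantitative is the crux; I would handle it by bounding $\int_0^{T_3}|\int_E(x-\bb(\hat\mu_t))d\hat\mu_t|^2\,dt$ directly using Eq.~\eqref{eq:COV-bl} in the $\theta$-direction, showing this integral is $O(\log n)$ or so, and then feeding that into Freedman's inequality. The secondary technical point is verifying that Stage 2 can indeed be realized with the stated rank-$1$ deficiency property; for this I would simply cite \cite[Lemma 2]{eldan2021spectral} as the excerpt already suggests.
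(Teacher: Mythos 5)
Your three-stage outline matches the paper's skeleton, but the heart of your argument rests on a premise that is false, and this makes the probability bookkeeping throughout the proposal untenable. You aim to show that $\hat{\mu}_{T_3}(E)$ stays within a constant factor of $\xi$ \emph{with probability close to $1$} (failure probability at most the target bound in each stage), by arguing that after an initial $O(1)$ window the martingale $\hat{\mu}_t(E)$ ``freezes'' and the remaining length-$n$ interval contributes negligible quadratic variation. The opposite is true: by Eq.~\eqref{eq:def_mu_t_id}, $\hat{\mu}_{T_3}$ is a Gaussian of covariance $\tfrac1n \Ind_n$ restricted to $K$, i.e.\ an almost fully localized measure, so $\hat{\mu}_{T_3}(E)$ behaves essentially like $\mathbf{1}_E(X)$ for $X\sim\mu$ --- a random variable with mean $\xi$ and variance close to $\xi(1-\xi)$, hence concentrated near $\{0,1\}$, not near $\xi$. (Concretely: take $K$ a cube and $E$ a slab of measure $\xi$; then $\nu_{y,n}(E)$ is within $[\xi/2,3/4]$ only when $y$ lies in a transition window of width $O(1)$ out of $O(\sqrt n)$, so the good event has probability of order $1/\sqrt n$.) The total quadratic variation of the martingale over $[0,T_3]$ is of order $\xi$, not $o(\xi^2)$, so no maximal or Freedman-type inequality can keep the process within $[\tfrac34\xi,\tfrac54\xi]$; the diffusion coefficient $\left|\int_E (x-\bb(\hat\mu_t))\,d\hat\mu_t(x)\right|$ becomes small precisely when $\hat\mu_t(E)$ approaches $0$ or $1$, which is where the martingale actually freezes. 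The lemma is a \emph{lower bound on a genuinely small probability}, and any proof must produce the factor $\frac{1}{\sqrt{n}}$ (up to the other parameters) as the probability of a rare event, not as an error term.

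This is exactly what the paper's Stage~3 analysis (Lemma~\ref{lem:stage3}, split into Lemmas~\ref{lem:stage3_case1_nu} and~\ref{lem:stage3_case2_nu}) supplies and what your proposal is missing. Stages~1 and~2 are only shown to succeed with \emph{constant} probability ($0.6$ and $0.5$; your demand of failure probability $\lesssim \xi^{3/2}/\sqrt{n\kappa_n^2\log n}$ in Stage~1 is also unattainable, since over $[0,T_1]$ the martingale's fluctuation is already of order $\xi$ up to constants). The small factor $\frac{c\,\xi}{\log(1/\xi)}\sqrt{T_1/n}$ comes entirely from Stage~3, via two separate mechanisms depending on the size of $\alpha\sigma^2$: when $\alpha\sigma^2$ is small, an optional-stopping argument shows the one-dimensional martingale lands in the intermediate window with probability $\gtrsim\xi$; when $\alpha\sigma^2$ is large, one uses the $\sqrt{\alpha\sigma^2}$-Lipschitz dependence of $y\mapsto\mo_{y,\alpha}(h_E)$ on the posterior mean $y$ (Lemma~\ref{lem:lipschitzness_of_G}) together with a one-dimensional isoperimetric inequality for the law $\rho$ of $Y$ to show that the set of $y$ giving an intermediate value of $\mo_{y,\alpha}(h_E)$ has $\rho$-measure at least $\frac{c\,\xi}{\log(1/\xi)}\sqrt{T_1/n}$. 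Neither mechanism appears in your proposal, and without one of them (or a substitute that lower-bounds the probability of the rare intermediate event) the proof cannot be completed.
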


\begin{proof}[Proof of Lemma~\ref{lem:approximate_conserv_variance_overall}]
  Lemma~\ref{lem:approximate_conserv_variance_overall} directly follows from \eqref{eq:processes_are_same} combined with the two lemmas above.
\end{proof}

\subsection{Approximate conservation of variance for the process \texorpdfstring{$\hat \mu_t$}{mu}}
\label{sub:approx_conserv_var_for_3stage_SL}
To prove Lemma~\ref{lem:approx_conserv_var_at_time_n}, we proceed with three lemmas which deal with each stage of the stochastic process one by one. Consider the following events,
\begin{align*}
  W_1 &:= \braces{ \hmu_{T_1}(E) \in \brackets{\xi - \frac{\xi}{4}, \xi + \frac{\xi}{4}} } \cap \braces{\hmu_{T_1}(K_r) \geq 1-\frac{\zeta}{20} }, \\
  W_2 &:= \braces{ \hmu_{T_2}(E) \in \brackets{\xi - \frac{\xi}{4}, \xi + \frac{\xi}{4}} } \cap \braces{\hmu_{T_2}(K_r) \geq 1-\frac{\zeta}{10} }, \text{ and } \\
  W_3 &:= \braces{ \hmu_{T_3}(E) \in \brackets{\frac{\xi}{2} , \frac{3}{4}} } \cap \braces{\hmu_{T_3}(K_r) \geq \frac{7}{8} }.
\end{align*}
\begin{lemma}
  \label{lem:stage1}
  For $\mu(E) = \xi \in (0, 1/2]$ and $\mu(K_r) \geq 1 - \zeta/100$, there exists a universal constant $C > 0$ such that at the end of Stage 1 (meaning, for $T_1 = \frac{\xi}{C \kappa_n^2 \log n}$), we have
  \begin{align*}
    \Prob\parenth{W_1 } \geq 0.6.
  \end{align*}
\end{lemma}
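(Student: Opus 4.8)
The plan is to exploit that, throughout Stage~1, the process $(\hmu_t)$ is a measure-valued martingale, so that $\hmu_t(S)$ is a bounded real-valued martingale for every fixed measurable set $S$, and to track separately the two events that make up $W_1$: that $\hmu_{T_1}(E)$ stays within a small multiple of $\xi$ of its initial value, and that $\hmu_{T_1}(K_r)$ stays close to $1$.

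First I would record the martingale structure. By \eqref{eq:mu_t_sde} with $C_t=\Ind_n$, integrating over a fixed set $S$ gives $d\hmu_t(S)=\langle v^S_t,dW_t\rangle$ with $v^S_t:=\int_S(x-\bb(\hmu_t))\hmu_t(x)\,dx$; hence $\phi^S_t:=\hmu_t(S)$ is a bounded martingale with quadratic variation $\langle\phi^S\rangle_t=\int_0^t|v^S_s|^2\,ds$, and $\EE\hmu_{T_1}(S)=\mu(S)$. The $K_r$ part is then immediate: $\hmu_t(K_r^c)$ is a nonnegative martingale with $\EE\hmu_{T_1}(K_r^c)=\mu(K_r^c)\le\zeta/100$, so Markov's inequality gives $\Prob(\hmu_{T_1}(K_r)<1-\zeta/20)\le(\zeta/100)/(\zeta/20)=1/5$.

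The substantive point is that $\hmu_{T_1}(E)$ does not drift far from $\xi$. Here I would first control the covariance $A_t=\Cov(\hmu_t)$: since $A_0=\Ind_n$ by isotropicity and $T_1\le\frac{1}{2C\kappa_n^2\log n}$, the (now-standard) covariance-stability estimate for stochastic localization — which controls the \emph{operator norm} of $A_t$ on time scales $\lesssim 1/(\kappa_n^2\log n)$ using the third-moment bound $\kappa_n$ (see \cite{chen2021almost}) — yields, for $C$ large enough, an event $\cG$ on which $\|A_t\|_{\mathrm{op}}\le 2$ for all $t\le T_1$ and $\Prob(\cG^c)\le 1/10$. On $[0,T_1]$, Cauchy–Schwarz (in $\hmu_t$, over $E$) gives $|v^E_t|^2\le\hmu_t(E)\,\langle A_tu_t,u_t\rangle\le\hmu_t(E)\,\|A_t\|_{\mathrm{op}}$ with $u_t=v^E_t/|v^E_t|$. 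Introducing the stopping time $\tau:=\inf\{t:\|A_t\|_{\mathrm{op}}>2\text{ or }|\phi^E_t-\xi|\ge\xi/4\}$, for $t<T_1\wedge\tau$ one has $|v^E_t|^2\le\tfrac52\xi$, hence $\langle\phi^E\rangle_{T_1\wedge\tau}\le\tfrac52\xi T_1=\tfrac{5\xi^2}{2C\kappa_n^2\log n}$ deterministically. Doob's $L^2$ maximal inequality applied to $\phi^E_{\cdot\wedge\tau}$ together with Chebyshev then gives $\Prob\big(\sup_{t\le T_1}|\phi^E_{t\wedge\tau}-\xi|\ge\xi/4\big)\le\tfrac{C'}{C\kappa_n^2\log n}\le\tfrac{1}{100}$ for $C$ large (using that $\kappa_n^2\log n$ is bounded below by a positive universal constant for $n\ge 2$). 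Since on $\cG$ the stopping time $\tau$ can be triggered only through the event $|\phi^E_t-\xi|\ge\xi/4$, this bounds $\Prob\big(\cG\cap\{\hmu_{T_1}(E)\notin[\xi-\xi/4,\xi+\xi/4]\}\big)\le1/100$, so $\Prob(\hmu_{T_1}(E)\notin[\xi-\xi/4,\xi+\xi/4])\le1/10+1/100$.

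Combining the two parts by a union bound, $\Prob(W_1^c)\le(1/10+1/100)+1/5=0.31<2/5$, hence $\Prob(W_1)\ge 0.6$. I expect the main obstacle to be the covariance-stability input used to construct $\cG$: one cannot get $\|A_t\|_{\mathrm{op}}\le 2$ by union-bounding the one-dimensional estimates $\langle A_t\theta,\theta\rangle\approx 1$ over an $\varepsilon$-net of the sphere, because the per-direction failure probability is only $e^{-\Theta(1)}$ while the net has size $e^{\Theta(n)}$; instead one must control a matrix potential such as $\trace(A_t^{p})$ with $p\asymp\log n$ (or $\trace\exp(\lambda A_t)$), and this is precisely the source of the $\log n$ factor in $T_1=\xi/(C\kappa_n^2\log n)$. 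Everything else is soft — two real-valued martingales, a Cauchy–Schwarz bound on their increments, and the Doob and Markov inequalities.
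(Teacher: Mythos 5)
Your proposal is correct and follows the same high-level route as the paper's proof: treat $\hmu_t(E)$ and $\hmu_t(K_r)$ as bounded martingales, bound the quadratic variation of the former via the operator-norm control on $A_t$ from the Klartag--Lehec estimate (Lemma~\ref{lem:A_t_op_Klartag_Lehec}), and dispatch the $K_r$ part with Markov's inequality. The notable difference is that you keep the $\hmu_t(E)$ factor coming out of Cauchy--Schwarz, writing $|v^E_t|^2\leq\hmu_t(E)\,\|A_t\|_{\mathrm{op}}$, and then localize with a stopping time so that the quadratic variation accumulated by time $T_1$ is of order $\xi^2/(C\kappa_n^2\log n)$ rather than $\xi/(C\kappa_n^2\log n)$. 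This is not a cosmetic improvement: the paper's proof as written bounds $d[g]_t\leq\|A_t\|_2\,dt$ without the $\hmu_t(E)$ factor, so on the good event it only yields $[g]_{T_1}\lesssim\xi/(C\kappa_n^2\log n)$, and the subsequent reflection-principle step requires the ratio $(\xi/4)/\sqrt{[g]_{T_1}}\asymp\sqrt{\xi}\cdot\sqrt{C\kappa_n^2\log n}$ to be large, which fails when $\xi$ is small. The sharper bound $d[g]_t\leq g_t(1-g_t)\|A_t\|_2\,dt$ that you use (this is exactly $\mathrm{Cov}_{\hmu_t}(\mathbf{1}_E,\langle\cdot,u\rangle)^2\leq\Var(\mathbf{1}_E)\Var(\langle\cdot,u\rangle)$) makes the ratio independent of $\xi$, which is what the choice $T_1=\xi/(C\kappa_n^2\log n)$ is clearly designed to exploit. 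Your use of Doob's $L^2$ maximal inequality plus Chebyshev in place of the time-change and reflection principle is an equally valid way to finish once the qv is correctly bounded. Two small items worth tightening if you write this out: (1) the claim that $\kappa_n^2\log n$ is bounded below by a universal positive constant should be justified, e.g.\ via $\psi_n^{-2}\leq C\log(n)\kappa_n^2$ together with $\psi_n\leq O(1)$; and (2) the operator-norm input should be quoted as the uniform-in-time version (Lemma~\ref{lem:A_t_op_Klartag_Lehec} of the paper, from \cite{klartag2022bourgain}), rather than the pointwise estimate from \cite{chen2021almost}, since you need $\|A_t\|_{\mathrm{op}}\leq 2$ for all $t\leq T_1$ simultaneously, and the failure-probability bound $\exp(-1/(CT_1))$ must be checked to be at most $1/10$ given $T_1\leq\xi/(C\kappa_n^2\log n)$ and $\xi\leq 1/2$.
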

\begin{lemma}
  \label{lem:stage2}
  We have
  \begin{align*}
    \Prob\parenth{W_2 |~ W_1} \geq 0.5.
  \end{align*}
\end{lemma}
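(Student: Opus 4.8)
The goal is to show that, conditionally on the Stage-1 event $W_1$, the Stage-2 event $W_2$ occurs with probability at least $1/2$. The key structural fact about Stage 2 is built into its very construction: by condition (1) of Stage 2, the quantity $\hmu_t(E)$ is held fixed along the whole of Stage 2, so $\hmu_{T_2}(E) = \hmu_{T_1}(E)$ almost surely. Hence the first clause of $W_2$, namely $\hmu_{T_2}(E) \in [\xi - \xi/4, \xi + \xi/4]$, is implied deterministically by the first clause of $W_1$. Consequently, on the event $W_1$, to establish $W_2$ it suffices to control the second clause only, i.e.\ to show
\begin{align*}
  \Prob\parenth{\hmu_{T_2}(K_r) \geq 1 - \tfrac{\zeta}{10} ~\big|~ W_1} \geq \tfrac12,
\end{align*}
where on $W_1$ we start Stage 2 from a measure with $\hmu_{T_1}(K_r) \geq 1 - \zeta/20$.

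The natural way to control $\hmu_t(K_r)$ during Stage 2 is to use that $t \mapsto \hmu_t(K_r)$ is a martingale (it is the SL evaluation of the fixed indicator $\mathbf{1}_{K_r}$, and $\hmu_t$ is a measure-valued martingale regardless of the choice of driving matrix $C_t$). So I would set $N_t := 1 - \hmu_t(K_r) = \hmu_t(K_r^c)$, a nonnegative martingale on $[T_1, T_2]$ with $N_{T_1} \leq \zeta/20$ on $W_1$. By the optional stopping theorem, $\Exs[N_{T_2} \mid \filtr_{T_1}] = N_{T_1} \leq \zeta/20$ on $W_1$, and then Markov's inequality gives
\begin{align*}
  \Prob\parenth{N_{T_2} \geq \tfrac{\zeta}{10} ~\big|~ \filtr_{T_1}} \leq \frac{\zeta/20}{\zeta/10} = \tfrac12
\end{align*}
on $W_1$. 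Since $W_1 \in \filtr_{T_1}$, integrating this bound over $W_1$ yields $\Prob(N_{T_2} \geq \zeta/10 \mid W_1) \leq 1/2$, hence $\Prob(W_2 \mid W_1) \geq 1/2$, which is exactly the claim. (One should double-check that $T_2$ is a genuine stopping time with $T_2 < \infty$ a.s.\ and that $N_t$ is uniformly integrable on the random interval $[T_1, T_2]$ — both follow from the construction of Stage 2, since $\int_0^{T_2} C_t^2\, dt \preceq n\Ind_n$ bounds the total quadratic variation, so the stopping time is bounded and the martingale is bounded in $L^2$.)

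I do not expect a serious obstacle here; the lemma is essentially a packaging of two facts already guaranteed by the Stage-2 construction (invariance of $\hmu_t(E)$) and by the general theory of the SL process (the martingale property of $\hmu_t(K_r)$), combined with a one-line Markov bound with the constants chosen precisely so that $\zeta/20$ versus $\zeta/10$ gives probability $1/2$. The only mild subtlety worth spelling out is the verification that optional stopping applies to $N_t$ at the (implicitly defined) stopping time $T_2$; the cleanest route is to note that $\int_0^t C_s^2\,ds$ is monotone and bounded above by $n\Ind_n$, so $T_2$ is bounded, and on a bounded interval the $L^2$-bounded martingale $N_t$ (with $0 \le N_t \le 1$) is automatically uniformly integrable, so $\Exs[N_{T_2}\mid\filtr_{T_1}] = N_{T_1}$ holds without further fuss.
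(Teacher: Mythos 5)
Your proposal is correct and follows essentially the same route as the paper: the $E$-clause of $W_2$ is deterministic given $W_1$ by the Stage-2 construction, and the $K_r$-clause follows from the martingale property of $\hmu_t(K_r)$ together with Markov's inequality applied to $1-\hmu_t(K_r)$ (the paper writes this Markov bound out by hand rather than naming it). Your extra remarks on optional stopping and uniform integrability are a careful bookkeeping of a point the paper leaves implicit, but the argument is the same.
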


\begin{lemma}
  \label{lem:stage3}
  Under the assumption $\zeta \leq \xi^2/\sqrt{\log(10^4/\xi)}/10^8$, we have
  \begin{align*}
    \Prob\parenth{ W_3 | ~ W_2 } \geq \frac{c}{\log(\frac{1}{\xi})}\sqrt{\frac{T_1}{n}},
  \end{align*}
  for a universal constant $c>0$.
\end{lemma}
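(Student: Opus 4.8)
The plan is to analyze Stage 3 as a one-dimensional stochastic localization. By construction, starting from $\hmu_{T_2}$, Stage 3 runs SL with driving matrix $C_t = \theta\theta\tp$ for a fixed rank-one direction $\theta$ (measurable with respect to the first two stages), over a time window of length $n - \lambda_1$. Conditioned on $W_2$, we know $\hmu_{T_2}(E) \in [\tfrac34\xi, \tfrac54\xi]$ and $\hmu_{T_2}(K_r) \ge 1 - \zeta/10$. The key object is the martingale $g_t := \hmu_t(E)$, which by Eq.~\eqref{eq:mu_t_sde} satisfies
\begin{align*}
  dg_t = \Big(\int_E (x - \bb(\hmu_t))\tp\, d\hmu_t(x)\Big) C_t\, dW_t = \langle \theta, v_t\rangle\, d\tilde W_t,
\end{align*}
where $v_t = \int_E (x - \bb(\hmu_t))\,d\hmu_t(x)$ and $\tilde W_t$ is a one-dimensional Brownian motion. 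So $g_t$ is a bounded continuous martingale whose quadratic variation accrues at rate $\langle\theta, v_t\rangle^2 \le |v_t|^2 \le \mu(E) = \xi$ crudely, but more usefully one wants a \emph{lower} bound on its total quadratic variation over the window to force $g_t$ to move away from its starting value into the interval $[\tfrac\xi2, \tfrac34]$ with the claimed probability. Simultaneously I would track $h_t := \hmu_t(K_r)$, another bounded martingale, and argue it cannot drop far below $1$.

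The heart of the argument is a small-ball estimate: I want to show that with probability $\gtrsim \frac{1}{\log(1/\xi)}\sqrt{T_1/n}$, the martingale $g_t$ reaches a value in $[\tfrac\xi2, \tfrac34]$ by the end of Stage 3, while $h_t$ stays above $7/8$. The scale $\sqrt{T_1/n}$ is exactly the ratio (square-rooted) of the Stage-1 time to the total time $n$; heuristically, Stage 1 ``injects'' a variance of order $T_1 \cdot \xi$ (more precisely the fluctuation of $\hmu_{T_1}(E)$ has a standard deviation governed by the covariance contraction over time $T_1$), and then over the long Stage-3 window the one-dimensional diffusion $g_t$ needs to travel an $O(1)$ distance relative to its natural fluctuation scale. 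The factor $1/\log(1/\xi)$ accounts for the fact that starting from height $\xi$ we must reach a constant-order height, a multiplicative gap of $\log(1/\xi)$, and for a martingale the probability of such a multiplicative excursion scales like the reciprocal of that gap (by optional stopping / gambler's-ruin-type reasoning applied to $\log g_t$ or to $g_t$ directly). I would make this rigorous by defining the stopping time $\tau$ at which $g_t$ first exits $[\tfrac\xi4, \tfrac34]$ or $h_t$ drops below $7/8$ or the clock runs out, and then lower-bounding $\Prob(g_\tau \ge \tfrac\xi2 \text{ via hitting } \tfrac34 \text{ rather than } \tfrac\xi4)$ using the martingale property $\Exs[g_\tau] = g_0$ together with a lower bound on how much quadratic variation is guaranteed to accumulate — this last ingredient is where the lower bound on $|v_t|$ (equivalently, a lower bound on how ``spread'' $E$ remains under $\hmu_t$) and on the relevant eigenvalue direction $\langle\theta, v_t\rangle$ is needed, and it should follow from the fact that as long as $\hmu_t(E)$ is bounded away from $0$ and $1$ and $\hmu_t$ is a truncated Gaussian of variance $\ge 1/n$, the set $E$ carries non-trivial ``width'' in most directions, so $\langle\theta,v_t\rangle^2$ cannot be too small for too long without $g_t$ having already exited. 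The constraint $\zeta \le \xi^2/\sqrt{\log(10^4/\xi)}/10^8$ enters to guarantee that $K_r$ is so overwhelmingly large that $h_t$ staying above $7/8$ costs only a constant-factor loss in probability (a union bound: $\Prob(h_t \text{ dips below } 7/8) \lesssim \zeta/(7/8)^2 \cdot (\text{something})$, which is $\ll$ the target probability by the smallness of $\zeta$ relative to $\xi$).

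I expect the main obstacle to be the quantitative small-ball / excursion estimate for the martingale $g_t$ with the precise $\frac{1}{\log(1/\xi)}\sqrt{T_1/n}$ dependence: one needs both a clean lower bound on the accumulated quadratic variation $\int \langle\theta,v_t\rangle^2\,dt$ on the event that $g_t$ has not yet exited (so that $g_t$ genuinely diffuses rather than freezing), and an upper bound on its fluctuations so that the probability of the ``good'' excursion is not smaller than claimed. A secondary technical point is handling the coupling between $g_t$ and $h_t$ (they are driven by the same Brownian motion), which I would address by being generous — first run the excursion argument for $g_t$ alone to get probability $\gtrsim \frac{1}{\log(1/\xi)}\sqrt{T_1/n}$ of landing in $[\tfrac\xi2,\tfrac34]$, then subtract the probability that $h_t$ ever drops below $7/8$, bounded independently via Markov's inequality on the martingale $1 - h_t$ (with $\Exs[1-h_{T_3}] = 1 - \mu(K_r) \le \zeta/10$, Doob's maximal inequality gives $\Prob(\sup_t (1-h_t) \ge 1/8) \le (8)\cdot\zeta/10$), and the smallness of $\zeta$ ensures this is a lower-order correction. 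Assembling Lemmas~\ref{lem:stage1}, \ref{lem:stage2}, \ref{lem:stage3} by the chain rule $\Prob(W_3) \ge \Prob(W_3\mid W_2)\Prob(W_2\mid W_1)\Prob(W_1) \ge \frac{c}{\log(1/\xi)}\sqrt{T_1/n}\cdot\tfrac12\cdot\tfrac{3}{5}$ and recalling $T_1 = \xi/(C\kappa_n^2\log n)$ then yields Lemma~\ref{lem:approx_conserv_var_at_time_n} with the stated rate $\frac{c\xi^{3/2}}{\log(1/\xi)\sqrt{n\kappa_n^2\log n}}$.
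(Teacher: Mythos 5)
Your setup (reduction to a one-dimensional localization along $\theta$, the martingale $g_t=\hmu_t(E)$, and the companion martingale for $K_r$) matches the paper's, but the two ingredients that actually produce the bound are missing, and the substitutes you propose do not work. First, the factor $\frac{1}{\log(1/\xi)}\sqrt{T_1/n}$ does not come from a gambler's-ruin excursion of $g_t$: the target event is $g_{T_3}\in[\xi/2,3/4]$, an interval that already contains the starting value $g_0\approx\xi$, so there is no ``constant-order height'' to reach; the difficulty is rather that when the total injected variance $\thirdT\sigma^2$ is large the martingale typically collapses to $0$ or $1$, and one must lower-bound the probability that it lands in the middle at the fixed terminal time. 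The paper does this not pathwise but through the deterministic parametrization $y\mapsto\mo_{y,\thirdT}(h_E)$ of the terminal measure by the localization center $y$, whose law is the log-concave measure $\rho=\mo*\Normal(0,1/(\thirdT\sigma^2))$ of Eq.~\eqref{eq:def_rho}: Lemma~\ref{lem:lipschitzness_of_G} shows this map is $\sqrt{\thirdT\sigma^2}\le\sqrt{n/T_1}$-Lipschitz, so the sets where it is $\le\xi/2$ and $\ge 3/4$ (each of non-negligible $\rho$-mass, by Markov's inequality applied to the martingale identity) are separated by a gap of width $\ge\frac12\sqrt{T_1/n}$, and a one-dimensional isoperimetric inequality for $\rho$, whose relevant interval has length $O(\log(1/\xi))$ by log-concave tails --- this is where the logarithm enters --- gives the in-between set mass $\gtrsim\frac{\xi}{\log(1/\xi)}\sqrt{T_1/n}$. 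The complementary regime $\thirdT\sigma^2\le 1/512$, where your worry about the quadratic variation ``freezing'' applies, is handled separately and actually benefits from freezing; no lower bound on accumulated quadratic variation is needed anywhere (Lemmas~\ref{lem:stage3_case1_nu} and~\ref{lem:stage3_case2_nu}). Note also that the provable probability carries an extra factor of $\xi$ relative to what you assemble at the end; this factor (from the optional-stopping step, i.e.\ the genuine gambler's-ruin contribution) is exactly what feeds the $\xi^{3/2}$ in Lemma~\ref{lem:approx_conserv_var_at_time_n}, and your final arithmetic loses it.

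Second, your treatment of the $K_r$ constraint by union bound fails quantitatively. Doob's inequality gives $\Prob(\inf_t h_t\le 7/8)=O(\zeta)$ with $\zeta\approx\xi^2$, but the target probability is of order $\frac{\xi^{3/2}}{\log(1/\xi)\sqrt{n\kappa_n^2\log n}}$, which for large $n$ is far smaller than $\zeta$; subtracting $O(\zeta)$ therefore annihilates the bound. One must instead show that the $K_r$ condition holds \emph{conditionally} on essentially the entire good set for $E$. This is Lemma~\ref{lem:case2_nu_t_y_Kr_large}: using the convexity of $K_r$ (so that the restricted one-dimensional marginal is still log-concave) and a pointwise comparison of nearby log-concave densities (Lemma~\ref{lem:interval_overlap_two_close_logconcave}), the paper exhibits a single interval $I$ with $\rho(I)\ge 1-\xi/10$ on which $\mo_{y,\thirdT}(h_{K_r})\ge 7/8$ for \emph{every} $y\in I$, and then runs the whole decomposition into $I_1,I_2,I_3$ inside $I$. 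This interval structure, not a union bound, is what reconciles the two events, and it is the step where the hypothesis $\zeta\le\xi^2/\sqrt{\log(10^4/\xi)}/10^8$ is genuinely used.
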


\subsubsection{Analysis of Stage 1}
The proof of Lemma \ref{lem:stage1} relies on the analysis developed in recent years around the KLS conjecture (see e.g.~\cite{chen2021almost,klartag2022bourgain}). We apply the following upper bound on the operator norm of the covariance matrix $A_t$, proven by Klartag and Lehec:
\begin{lemma}[Lemma 5.2 in \cite{klartag2022bourgain}]
  \label{lem:A_t_op_Klartag_Lehec}
  For every $T \leq (C \kappa_n^2 \log n)^{-1}$ we have
  \begin{align*}
    \Prob(\vecnorm{A_t}{2} \geq 2 \text{ for } 0 \leq t \leq T) \leq \exp\parenth{-\frac{1}{CT}},
  \end{align*}
  where $C$ is a universal constant.
\end{lemma}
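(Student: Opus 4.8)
\medskip
\noindent\textbf{Proof plan.}
The plan is to reproduce the stochastic-localization estimate of Klartag and Lehec~\cite{klartag2022bourgain}. Take $C_s=\Id$ for all $s$, so that $\mu_t$ is the truncated Gaussian $\nu_{c_t/t,t}$ of~\eqref{eq:def_mu_t_id} and its covariance $A_t$ (defined in~\eqref{eq:def_A_t}) solves the matrix SDE $dA_t=dM_t-A_t^2\,dt$, $A_0=\Id$, where $M_t$ is the matrix martingale driven by the third moments of $\mu_t$: $dM_t=\sum_{k=1}^n G_t^{(k)}\,dW_t^k$ with $G_t^{(k)}\defn\int(x-\bb(\mu_t))_k\,(x-\bb(\mu_t))(x-\bb(\mu_t))\tp\,\mu_t(dx)$. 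Let $\tau\defn\inf\{t\ge0:\vecnorm{A_t}{2}\ge2\}$; by continuity $\vecnorm{A_\tau}{2}=2$ on $\{\tau<\infty\}$, and the goal is $\Prob(\tau\le T)\le\exp(-1/(CT))$. On $[0,\tau)$ one has $A_t\preceq2\Id$, so the affine image $\tilde\mu_t\defn(x\mapsto A_t^{-1/2}(x-\bb(\mu_t)))_\#\mu_t$ is isotropic and logconcave (it is a Gaussian density restricted to $K$, after an affine change of variables); hence each slice of its third-moment tensor has operator norm $\le\kappa_n$, and un-rescaling (each $A_t^{1/2}$ costs $\le\sqrt2$ on $[0,\tau)$) controls the $G_t^{(k)}$ by $\kappa_n$. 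In particular, for a fixed unit vector $v$, the martingale part of $\langle A_t v,v\rangle$ has quadratic variation $\bigl|\int\langle x-\bb(\mu_t),v\rangle^2(x-\bb(\mu_t))\,\mu_t(dx)\bigr|^2dt\le 8\kappa_n^2\,dt$ on $[0,\tau)$, while its drift $-\langle A_t^2v,v\rangle\le0$; so a single coordinate of $A_t$ is easy to control. The real issue is doing this \emph{uniformly} over $v$, i.e.\ for $\vecnorm{A_t}{2}$, without paying the $9^n$ loss of an $\varepsilon$-net union bound.

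For the uniform bound I would track the smooth spectral potential $\Phi_t\defn\trace(A_t^{\,p})$ for an even integer $p$ fixed only at the end; since $\vecnorm{A_t}{2}^{\,p}\le\Phi_t$ and $\Phi_\tau\ge2^{\,p}$, it suffices to bound $\Prob(\sup_{t\le T}\Phi_{t\wedge\tau}\ge2^{\,p})$. Itô's formula gives $d\Phi_t=p\,\trace(A_t^{\,p-1}dM_t)-p\,\trace(A_t^{\,p+1})\,dt+\tfrac p2\sum_{i=0}^{p-2}\sum_{k=1}^n\trace\!\bigl(A_t^{\,i}G_t^{(k)}A_t^{\,p-2-i}G_t^{(k)}\bigr)\,dt$: the first term is a martingale (a genuine one up to $\tau\wedge T$, where everything is bounded), the second a non-positive drift to be discarded, and the last the Itô-correction. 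The heart of the proof is to show that the Itô-correction is at most $\Theta\,\Phi_t\,dt$ on $[0,\tau)$ with $\Theta=\kappa_n^2\cdot p^{O(1)}$, i.e.\ proportional to $\Phi_t$ with no dimensional factor. One does this by passing to the eigenbasis of $A_t$, writing $\sum_k\trace(A_t^{\,i}G_t^{(k)}A_t^{\,p-2-i}G_t^{(k)})=\sum_{\ell,m}\lambda_\ell^{\,i}\lambda_m^{\,p-2-i}\sum_k(G_t^{(k)})_{\ell m}^2$, recognizing $\sum_k(G_t^{(k)})_{\ell m}^2=\bigl|\Exs_{\mu_t}[(x-\bb(\mu_t))(x-\bb(\mu_t))_\ell(x-\bb(\mu_t))_m]\bigr|^2$ as a squared third-moment vector, and estimating it through $\kappa_n$ after rescaling. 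The delicate point — and the main obstacle — is to carry this out \emph{without} picking up a spurious polynomial factor of $n$: the naive chain $\trace(A_t^{\,i}G_t^{(k)}A_t^{\,p-2-i}G_t^{(k)})\le\vecnorm{A_t}{2}^{\,p-2}\vecnorm{G_t^{(k)}}{\mathrm F}^2$ together with $\sum_k\vecnorm{G_t^{(k)}}{\mathrm F}^2\lesssim n^{O(1)}\kappa_n^2$ (equivalently, an $\varepsilon$-net union bound over the sphere) loses powers of $n$ and is too weak for the stated time scale, so one must balance the powers of $A_t$ against the $\kappa_n$-bounds so that the output is a bounded multiple of $\trace(A_t^{\,p})$; this refinement is morally the content of the differential inequalities developed around the KLS conjecture in~\cite{chen2021almost,klartag2022bourgain}.

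Granting the estimate $d\Phi_t\le dN_t+\Theta\,\Phi_t\,dt$ on $[0,\tau)$, with $N$ the martingale part, the process $\Phi_{t\wedge\tau}\,e^{-\Theta t}$ is a non-negative supermartingale with $\Phi_0=\trace(\Id)=n$, so Doob's maximal inequality gives $\Prob(\tau\le T)\le\Prob\bigl(\sup_{t\le T}\Phi_{t\wedge\tau}e^{-\Theta t}\ge2^{\,p}e^{-\Theta T}\bigr)\le n\,2^{-p}e^{\Theta T}=\exp(\log n-p\log2+\Theta T)$. Choosing $p$ to be the nearest even integer to a small constant times $1/(\kappa_n^2 T)$ — which is $\gtrsim\log n\ge1$ in the admissible range $T\le(C\kappa_n^2\log n)^{-1}$, so the choice is legitimate — the exponent becomes $\log n-\Omega(1/(\kappa_n^2T))$, and the hypothesis on $T$ makes the second term exceed twice $\log n$, leaving $\Prob(\tau\le T)\le\exp(-\Omega(1/(\kappa_n^2T)))$, which is the asserted bound in that regime. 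I expect essentially all the difficulty to lie in the Itô-correction estimate of the second paragraph; the remainder is routine manipulation of Itô's formula, Doob's inequality and the optimization over $p$.
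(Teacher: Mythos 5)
The paper does not actually prove this lemma: it is imported verbatim from Klartag and Lehec~\cite{klartag2022bourgain} (their Lemma 5.2), and the only thing the paper adds is the remark that the uniform-in-$t$ statement follows from the fixed-$t$ one by Doob's inequality. Your stopping-time-plus-Doob packaging of the uniformity is consistent with that remark, and your overall architecture --- spectral potential $\trace(A_t^{\,p})$, It\^o's formula, third-moment control through $\kappa_n$ after rescaling by $A_t^{1/2}$ on $[0,\tau)$, optimization over $p$ --- is the right family of arguments. But as a proof your proposal has a genuine gap, and you name it yourself: the entire mathematical content of the lemma is the estimate
\[
\frac p2\sum_{i=0}^{p-2}\sum_{k}\trace\bigl(A_t^{\,i}G_t^{(k)}A_t^{\,p-2-i}G_t^{(k)}\bigr)\;\le\;\Theta\,\trace(A_t^{\,p}),\qquad \Theta=\kappa_n^2\,p^{O(1)},
\]
with no dimensional factor. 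Everything surrounding it (the SDE $dA_t=dM_t-A_t^2\,dt$, the It\^o expansion of $\trace(A^p)$, discarding the non-positive drift, the supermartingale $\Phi_{t\wedge\tau}e^{-\Theta t}$, Doob, the choice of $p$) is routine and correctly set up, but ``granting the estimate'' grants precisely the step the lemma exists to assert; a sketch that black-boxes it does not constitute a proof.

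There is also a quantitative discrepancy that you dismiss too quickly at the end. With $\Theta=c_0\kappa_n^2p^2$ the optimization forces $p\asymp 1/(\kappa_n^2T)$, and your exponent is $\log n-\Omega(1/(\kappa_n^2T))$, i.e.\ a tail of $\exp(-c/(\kappa_n^2T))$. The lemma asserts $\exp(-1/(CT))$, which is stronger by a factor of $\kappa_n^2$ in the exponent; the two coincide ``in that regime'' only if $\kappa_n=O(1)$, which is essentially the KLS conjecture. For the one place the lemma is invoked in this paper (the proof of Lemma~\ref{lem:stage1}, where any $o(1)$ failure probability at $T_1=\xi/(C\kappa_n^2\log n)$ suffices) your weaker tail would be enough, but it does not establish the lemma as stated. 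You need either a sharper mechanism for the probability bound (e.g.\ an exponential-martingale/Freedman-type control of the martingale part $N_t$ rather than plain Markov on the supermartingale), or to check what tail Klartag--Lehec actually obtain and, if necessary, state and use the weaker version explicitly.
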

While Lemma 5.2 in \cite{klartag2022bourgain} only shows the result for a fixed $t \leq T$, it is not hard to see that, using Doob's inequality, the same proof can be generalized to the case of all $t \in [0,T]$ with a small modification on the constant $C$.

Equipped with Lemma~\ref{lem:A_t_op_Klartag_Lehec}, Lemma~\ref{lem:stage1} then follows from a simple stochastic calculus. 
\begin{proof}[Proof of Lemma~\ref{lem:stage1}]
  Let $g_t = \hmu_t(E)$. We have
  \begin{align*}
    d g_t = \int_E (x - \bb(\hmu_t))\tp dW_t \hmu_t(x) dx.
  \end{align*}
  Its quadratic variation is
  \begin{align*}
    d [g]_t &= \abss{\int_E (x- \bb(\hmu_t)) \hmu_t(x) dx}^2 dt \\
    &\leq \vecnorm{A_t}{2} dt,
  \end{align*}
  where $A_t$ is the covariance  matrix of $\bb(\hmu_t)$. We have 
  \begin{align*}
    \Prob\parenth{\hmu_{t}(E) \in \brackets{\xi - \frac{\xi}{4}, \xi + \frac{\xi}{4}}} &= \Prob\parenth{\tilde{W}_{[g]_t} \in \brackets{- \frac{\xi}{4}, \frac{\xi}{4} } } \\
    &\geq 0.9 - \Prob\parenth{\int_0^t \vecnorm{A_\tau}{2} d\tau > \frac{\xi}{64}}
  \end{align*}
  Applying Lemma~\ref{lem:A_t_op_Klartag_Lehec} on the operator norm of $A_t$ with $t \leq \frac{\xi}{C \kappa_n^2 \log n}$, it follows that
  $$
  \Prob\parenth{\hmu_{T_1}(E) \in \brackets{\xi - \frac{\xi}{4}, \xi + \frac{\xi}{4}}} \geq 0.8.
  $$
  Next, since $\hat \mu_t(K_r)$ is a martingale, we have $\EE[\hat \mu_{T_1}(K_r)] \geq 1-\zeta/100$. Since $\hat \mu_{T_1}(K_r) \leq 1$ almost surely, we can use Markov's inequality to conclude that
  $$
  \Prob \parenth{ \hat \mu_{T_1}(K_r) \leq 1-\zeta/20} \leq 0.2.
  $$
  This concludes the lemma.
\end{proof}

\subsubsection{Analysis of Stage 2}
In the second stage of the process, the measure of $E$ is kept constant almost surely, so Lemma \ref{lem:stage2} boils down to a simple application of Markov's inequality.
\begin{proof}[Proof of Lemma~\ref{lem:stage2}]
  In the second stage, by construction, we have 
  \begin{align*}
    \hmu_{T_2}(E) = \hmu_{T_1}(E).
  \end{align*}
  Additionally, since $\hmu_t(K_r)$ is a martingale, for $t\in [T_1, T_2]$, we have 
  \begin{align*}
    \Exs [\hmu_t(K_r) \mid W_1] = \mu_{T_1}(K_r) \geq 1 - \zeta/20.
  \end{align*}
  Then $\Prob(\hmu_t(K_r) \geq 1-\zeta/10) + (1-\zeta/10) \parenth{ 1- \Prob(\hmu_t(K_r) \geq 1-\zeta/10)} \geq 1 - \zeta/20$, which results in
  \begin{align*}
    \Prob(\hmu_t(K_r) \geq 1-\zeta/10) \geq 0.5.
  \end{align*}
  We conclude by applying the union bound. 
\end{proof}

\subsubsection{Analysis of Stage 3}
In this section we prove Lemma \ref{lem:stage3}. Here is the main observation: since the driving matrix $C_t$ is fixed to be the rank-$1$ matrix $\theta \theta^T$ between time $T_2$ and $T_3$, the SL process during that time only depends on the marginal of the measure $\hat \mu_{T_2}$ onto the direction $\theta$, which means that the proof boils down to the analysis of a one-dimensional SL process. This analysis, however, is quite long and technical and requires a few lemmas on properties of one-dimensional logconcave measures summarized in Appendix~\ref{app:properties_of_logconcave}.

Recall that we condition on the event $W_2$ which amounts to
\begin{align*}
  \hmu_{T_2}(E) \in \brackets{\xi - \frac{\xi}{4}, \xi + \frac{\xi}{4}} \text{ and } \hmu_{T_2}(K_r) \geq 1 - \zeta/10,
\end{align*}
For a unit vector $\theta$ (obtained by Stage 2) the third stage of the process runs the SL process with a control matrix $C_t = \theta \theta\tp$  time period $\thirdT := n-\lambda_1$. That is, for $t\geq T_2$, 
\begin{align*}
  d \hmu_t(x) = (x- \bb(\hmu_t))\tp \theta \theta\tp dW_t \hmu_t(x).
\end{align*}
Define 
\begin{align*}
  \sigma^2 \defn \theta\tp A_{T_2} \theta = \theta\tp \mathrm{Cov}(\hmu_{T_2}) \theta,
\end{align*}
the variance of the starting measure in the direction of $\theta$. As a result of the Brascamp and Lieb inequality~\cite{brascamp2002extensions}, we have $A_{T_2} \preceq B_{T_2}^{-1}$, and hence 
\begin{align}
  \label{eq:third_SL_stage_sigma^2_bound}
  \sigma^2 \leq \frac{1}{T_1}.
\end{align}

For $t \in [0, \thirdT]$, define $\mo_t$ to be the density on $\real$ obtained by taking the push-forward of $\hmu_{T_2+t}$ via $x \mapsto \frac{1}{\sigma} \cdot x \tp \theta$. That is, 
\begin{align}
  \label{eq:def_mo_1d_pushforward}
  \mo_t(z) \defn \int_{H(z)} \hmu_{T_2 + t}(x) dx,
\end{align}
where, for $z \in \RR$, $H(z) := \{x \in \RR^n;~ \theta^T x = z \sigma \}$ is defined as the fiber corresponding to the value $z$. For a subset $S \subseteq \real^n$, define 
\begin{align}
  \label{eq:def_h_S}
  h_S(z) \defn \begin{cases} \displaystyle
    \frac{\int_{H(z)}  \mathbf{1}_{ \{x \in S \} } \hat \mu_{T_2}(x) dx}{\int_{H(z)} \hmu_{T_2}(x) dx} & \text{ if } \mo_0(z) \neq 0, \\
    0 & \text{otherwise.}
  \end{cases} 
\end{align}
Observe that $h_S(z) \in [0, 1]$ for all $z$. Note that the value of $h_S$ remains unchanged if the term $\hmu_{T_2}$ is replaced with $\hmu_{T_2+t}$ in the above formula, for any $t \in [0, \thirdT]$. This is because
\begin{align*}
  \hmu_{T_2+t}(x) \propto \hmu_{T_2}(x) \exp\parenth{ - t (\theta \tp x)^2 + \tilde{c}_t \theta \tp x},
\end{align*}
for some $\tilde{c}_t$. This means that for a fixed value of $z$ the points in $H(z)$ are all multiplied by the same factor. The definition of $h_S$ allows us to identify $\mo_t(h_S)$ with $\hmu_{T_2 +t}(S)$ as shown in the following lemma.
\begin{lemma}
  \label{lem:identification_nloc_vs_1loc}
  For the stochastic process $\mo_t$ defined above, for $t > 0$, we have
  \begin{align*}
    \bb(\mo_t) &= \theta \tp \bb(\hmu_{T_2+t}) / \sigma \\
    \Var(\mo_t) &= \theta \tp \Cov(\hmu_{T_2+t}) \theta / \sigma^2 \\
    \mo_t(h_S) &= \hmu_{T_2 + t}(S).
  \end{align*}
  In particular, $\Var(\mo_0) = 1$. Additionally, $(\mo_t)_{t\geq 0}$ satisfies the following stochastic differential equation
  \begin{align}
    \label{eq:mo_sde}
    d \mo_t = \sigma (z - \bb(\mo_t)) d \bar{W}_t \mo_t(z),
  \end{align}
  where $\bar{W}_t = \theta \tp W_t$ is a one-dimensional Brownian motion.
\end{lemma}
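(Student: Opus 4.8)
The statement is essentially a bookkeeping lemma: it asserts that the one-dimensional push-forward $\mo_t$ of the $n$-dimensional measure $\hmu_{T_2+t}$ onto the line spanned by $\theta$ (rescaled by $\sigma$) is itself a stochastic localization process in dimension one, and that the functional $h_S$ correctly encodes set-probabilities. The plan is to verify the three displayed identities and the SDE in sequence, using only the definitions \eqref{eq:def_mo_1d_pushforward}, \eqref{eq:def_h_S}, the representation $\hmu_{T_2+t}(x) \propto \hmu_{T_2}(x)\exp(-t(\theta\tp x)^2 + \tilde c_t \theta\tp x)$ noted just above, and the SDE \eqref{eq:mu_t_sde} governing $d\hmu_t$ with the driving matrix $C_t = \theta\theta\tp$.

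First I would establish the algebraic identities. For the mean, write $\bb(\mo_t) = \int_\RR z\, \mo_t(z)\,dz = \int_\RR z \int_{H(z)} \hmu_{T_2+t}(x)\,dx\,dz$; since on the fiber $H(z)$ one has $\theta\tp x = z\sigma$, this is $\tfrac1\sigma \int_{\RR^n} (\theta\tp x)\,\hmu_{T_2+t}(x)\,dx = \theta\tp \bb(\hmu_{T_2+t})/\sigma$, using that the push-forward preserves integrals of functions depending only on $\theta\tp x$. The variance identity follows the same way from $\int z^2 \mo_t(z)\,dz = \tfrac1{\sigma^2}\int (\theta\tp x)^2 \hmu_{T_2+t}(x)\,dx$ together with the mean identity, giving $\Var(\mo_t) = \theta\tp\Cov(\hmu_{T_2+t})\theta/\sigma^2$; at $t=0$ this is $\theta\tp A_{T_2}\theta/\sigma^2 = 1$ by the definition of $\sigma^2$. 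For $\mo_t(h_S) = \hmu_{T_2+t}(S)$, I would integrate: $\mo_t(h_S) = \int_\RR h_S(z)\,\mo_t(z)\,dz = \int_\RR h_S(z)\int_{H(z)}\hmu_{T_2+t}(x)\,dx\,dz$, and then invoke the observation (already argued in the text) that within a fiber $H(z)$ the ratio defining $h_S$ is unchanged when $\hmu_{T_2}$ is replaced by $\hmu_{T_2+t}$, because the exponential reweighting factor $\exp(-t(\theta\tp x)^2+\tilde c_t\theta\tp x)$ is constant on $H(z)$ and cancels between numerator and denominator. Hence $h_S(z)\int_{H(z)}\hmu_{T_2+t}(x)\,dx = \int_{H(z)}\mathbf{1}_{\{x\in S\}}\hmu_{T_2+t}(x)\,dx$, and integrating over $z$ gives $\hmu_{T_2+t}(S)$.

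Next I would derive the SDE. Starting from \eqref{eq:mu_t_sde} with $C_{T_2+t} = \theta\theta\tp$, one has $d\hmu_{T_2+t}(x) = (x-\bb(\hmu_{T_2+t}))\tp\theta\theta\tp dW_t\,\hmu_{T_2+t}(x) = (\theta\tp x - \theta\tp\bb(\hmu_{T_2+t}))(\theta\tp dW_t)\,\hmu_{T_2+t}(x)$. Integrating both sides over the fiber $H(z)$ (where $\theta\tp x = z\sigma$ is constant), and using the mean identity, gives $d\mo_t(z) = (z\sigma - \sigma\,\bb(\mo_t))\,d\bar W_t\,\mo_t(z) = \sigma(z-\bb(\mo_t))\,d\bar W_t\,\mo_t(z)$ where $\bar W_t := \theta\tp W_t$ is a standard one-dimensional Brownian motion (its quadratic variation is $|\theta|^2\,dt = dt$). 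This is exactly \eqref{eq:mo_sde}.

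The step I expect to require the most care is the interchange of the stochastic integral with the fiber integration $\int_{H(z)}\cdot\,dx$ in deriving the SDE — formally one should argue via a stochastic Fubini theorem that $\int_{H(z)} d\hmu_{T_2+t}(x)\,dx = d\bigl(\int_{H(z)}\hmu_{T_2+t}(x)\,dx\bigr) = d\mo_t(z)$, which is justified by the boundedness and adaptedness of the integrands (the measures are supported on the bounded set $K$ and the drift/diffusion coefficients are integrable). The remaining manipulations are elementary once one consistently uses that $\theta\tp x$ is the fiber coordinate and that the push-forward commutes with integration of functions of $\theta\tp x$ alone.
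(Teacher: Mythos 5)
Your proposal follows essentially the same route as the paper: compute $\bb(\mo_t)$ and $\Var(\mo_t)$ via the fiber integral using $\theta\tp x = z\sigma$ on $H(z)$, deduce $\mo_t(h_S) = \hmu_{T_2+t}(S)$ from the fact that the reweighting factor is constant on each fiber, and obtain the SDE by integrating $d\hmu_{T_2+t}$ from \eqref{eq:mu_t_sde} with $C_t=\theta\theta\tp$ over $H(z)$. The only addition is your remark on the stochastic Fubini interchange, which the paper leaves implicit; this is a reasonable clarification but does not change the argument.
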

The proof of this lemma is straightforward, and is provided in Subsection~\ref{sub:additional_proofs_in_3_stage} below. Based on the above stochastic differential equation, for $t> 0$, define
\begin{align*}
  \moy_t \defn \frac{1}{t} \int_0^t \frac{1}{\sigma} d \bar{W}_s + \bb(\mo_s) ds.
\end{align*}
According to Lemma 2.1 in~\cite{eldan2013thin}, $\mo_t$ is uniquely determined by the value of $\moy_t$,  for all $t \in [0, \thirdT]$. Additionally, it takes the form
\begin{align*}
  \mo_t(z) = \frac{\mo_0(z) \exp (-t\sigma^2(z - \moy_t)^2)}{\int \mo_0(\varsigma) \exp (-t\sigma^2(\varsigma - \moy_t)^2) d\varsigma }.
\end{align*}
An application of Theorem 2 in~\cite{el2022information} shows that $\moy_{\thirdT}$ has the law $\rho$, where 
\begin{align}
  \label{eq:def_rho}
  \rho \defn \mo * \Normal\parenth{0, \frac{1}{\thirdT \sigma^2}}.
\end{align}
With a slight abuse of notation, we introduce a deterministic density $\mo_{y, t}$ with two subindices as
\begin{align}
  \label{eq:def_mo_y_t}
  \mo_{y, t}(z) \defn \frac{\mo_0(z) \exp\parenth{-t\sigma^2(z - y)^2}}{\int \mo_0(\varsigma) \exp\parenth{-t\sigma^2(\varsigma - y)^2} d\varsigma }, \quad \forall z \in \real.
\end{align}
From here, we can identify $\mo_t$ with $\mo_{\moy_t, t}$. The next lemma shows that the function $y \mapsto \mo_{y, \thirdT}(h_S)$  is Lipschitz for a fixed $S \subset \real$. 

\begin{lemma}
  \label{lem:lipschitzness_of_G}
  Let $h:\RR^n \to [0,1]$. Then for all $y, \tilde{y} \in \real$, we have
  \begin{align*}
    \abss{\mo_{y, \thirdT}(h) - \mo_{\tilde y, \thirdT}(h)} \leq \sqrt{\thirdT \sigma^2} \abss{ y - \tilde{y}}.
  \end{align*}
\end{lemma}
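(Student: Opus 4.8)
The plan is to show Lipschitzness of $y \mapsto \mo_{y,\thirdT}(h)$ by directly differentiating in $y$ and bounding the derivative uniformly by $\sqrt{\thirdT \sigma^2}$, then integrating. Write $a := \thirdT \sigma^2$ for brevity, and recall from~\eqref{eq:def_mo_y_t} that $\mo_{y,\thirdT}$ is the probability density on $\RR$ proportional to $\mo_0(z) \exp(-a(z-y)^2)$. Since $h$ takes values in $[0,1]$, it suffices to control how fast the measure $\mo_{y,\thirdT}$ itself moves in total variation, and in fact to control $\frac{d}{dy}\int h(z)\,\mo_{y,\thirdT}(z)\,dz$.

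The key computation: differentiating the log-density, $\partial_y \log \mo_{y,\thirdT}(z) = 2a(z-y) - \Exs_{Z \sim \mo_{y,\thirdT}}[2a(Z-y)] = 2a\big(z - \bb(\mo_{y,\thirdT})\big)$. Hence
\begin{align*}
  \frac{d}{dy} \mo_{y,\thirdT}(h) = \int h(z)\, \cdot 2a\big(z - \bb(\mo_{y,\thirdT})\big)\, \mo_{y,\thirdT}(z)\, dz = 2a \cdot \Cov_{Z \sim \mo_{y,\thirdT}}\big(h(Z), Z\big).
\end{align*}
By Cauchy--Schwarz, $\abss{\Cov(h(Z), Z)} \leq \sqrt{\Var(h(Z))}\sqrt{\Var(Z)}$. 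Since $h \in [0,1]$ we have $\Var(h(Z)) \leq \tfrac14$. For $\Var(Z)$ under $\mo_{y,\thirdT}$: since $\mo_0$ is logconcave (being a marginal of the logconcave measure $\hmu_{T_2}$) with $\Var(\mo_0)=1$ by Lemma~\ref{lem:identification_nloc_vs_1loc}, and $\mo_{y,\thirdT}$ is obtained by multiplying $\mo_0$ by a Gaussian factor $\exp(-a(z-y)^2)$, the variance can only decrease: multiplying a logconcave density by a Gaussian (equivalently, tilting by a concave quadratic) contracts the covariance — this is exactly the Brascamp--Lieb-type monotonicity already invoked in~\eqref{eq:third_SL_stage_sigma^2_bound}, applied in one dimension. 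So $\Var(Z) \leq \Var(\mo_0) = 1$. Combining, $\big|\tfrac{d}{dy}\mo_{y,\thirdT}(h)\big| \leq 2a \cdot \tfrac12 \cdot 1 = a = \thirdT\sigma^2$.

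Wait — this gives the bound $\thirdT\sigma^2$, whereas the claim is $\sqrt{\thirdT\sigma^2}$, which is the stronger bound precisely when $\thirdT\sigma^2 \le 1$; but in Stage 3 we have $\thirdT = n - \lambda_1 \le n$ and $\sigma^2 \le 1/T_1$ with $T_1$ tiny, so $\thirdT\sigma^2$ is large, not small. So the $\sqrt{\cdot}$ bound must come from a different, sharper estimate. The fix is to be more careful about $\Var(Z)$ under $\mo_{y,\thirdT}$: multiplying by $\exp(-a(z-y)^2)$ forces the variance down to at most $\tfrac{1}{2a}$ (this is the one-dimensional Brascamp--Lieb bound $\Cov \preceq (\text{Hessian of potential})^{-1}$, and the Gaussian factor alone contributes Hessian $2a$). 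Then $\abss{\Cov(h(Z),Z)} \le \tfrac12 \cdot \sqrt{\tfrac{1}{2a}} \le \tfrac{1}{\sqrt a}$, giving $\big|\tfrac{d}{dy}\mo_{y,\thirdT}(h)\big| \le 2a \cdot \tfrac{1}{\sqrt{2a}} = \sqrt{2a}$; tightening the numeric constant (e.g. using $\Var(h(Z)) \le \bar h(1-\bar h)$ together with a one-sided bound, or simply noting the bound $\sqrt{a}$ is achievable via $|\Cov(h(Z),Z)| \le \frac{1}{2}\sqrt{\Var Z} \cdot 1$ is not the route) to land exactly on $\sqrt{\thirdT\sigma^2}\abss{y-\tilde y}$. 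I would then integrate: $\abss{\mo_{y,\thirdT}(h) - \mo_{\tilde y,\thirdT}(h)} \le \int_{\tilde y}^{y} \big|\tfrac{d}{ds}\mo_{s,\thirdT}(h)\big|\,ds \le \sqrt{\thirdT\sigma^2}\abss{y - \tilde y}$.

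The main obstacle is getting the numeric constant exactly right: the derivative identity gives $\frac{d}{dy}\mo_{y,\thirdT}(h) = 2\thirdT\sigma^2 \Cov_{\mo_{y,\thirdT}}(h, \mathrm{id})$, and one must extract the factor $\sqrt{\thirdT\sigma^2}$ rather than $\thirdT\sigma^2$ from a sharp variance bound on $\mo_{y,\thirdT}$. The clean way is the one-dimensional Brascamp--Lieb inequality: $\mo_{y,\thirdT}$ has a potential whose second derivative is $-(\log\mo_0)'' + 2\thirdT\sigma^2 \ge 2\thirdT\sigma^2$ by concavity of $\log\mo_0$, so $\Var_{\mo_{y,\thirdT}}(Z) \le \frac{1}{2\thirdT\sigma^2}$; then Cauchy--Schwarz with $\Var(h(Z)) \le \frac14 \le \frac12$ yields $\abss{\Cov(h,\mathrm{id})} \le \frac{1}{2}\cdot\frac{1}{\sqrt{2\thirdT\sigma^2}}\cdot\sqrt 2 = \frac{1}{2\sqrt{\thirdT\sigma^2}}$, and multiplying by $2\thirdT\sigma^2$ gives exactly $\sqrt{\thirdT\sigma^2}$. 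I would cite the relevant one-dimensional logconcavity fact from Appendix~\ref{app:properties_of_logconcave} for the variance bound and the fact that $\mo_0$ is logconcave as a marginal of $\hmu_{T_2}$.
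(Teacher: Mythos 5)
Your proposal is correct and follows essentially the same route as the paper: differentiate in $y$, write the derivative as $\mathrm{const}\cdot\Cov_{\mo_{y,\thirdT}}(h,\mathrm{id})$, apply Cauchy--Schwarz, and bound $\Var(\mo_{y,\thirdT})$ by the one-dimensional Brascamp--Lieb inequality using the $\thirdT\sigma^2$-strong logconcavity of $\mo_{y,\thirdT}$. The only blemish is the spurious $\sqrt{2}$ you insert at the end to ``land exactly'' on $\sqrt{\thirdT\sigma^2}$; the honest computation with your normalization gives $2\thirdT\sigma^2\cdot\tfrac12\cdot(2\thirdT\sigma^2)^{-1/2}=\sqrt{\thirdT\sigma^2/2}$, which is already at most the claimed bound, so no adjustment is needed.
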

Its proof is provided in Subsection~\ref{sub:additional_proofs_in_3_stage}. With the above notation and lemmas, we are ready to prove Lemma~\ref{lem:stage3}. It is done by discussing the two cases based on the value of $\thirdT \sigma^2$:
\begin{enumerate}
  \item $\thirdT\sigma^2 < \frac{1}{512}$
  \item $\thirdT \sigma^2 \geq \frac{1}{512}$.
\end{enumerate}
Define the events
\begin{align*}
  H_1 &:= \left \{\mo_{Y, \alpha}(h_{E}) \in \brackets{\frac12 \xi, \frac34} \right \} \cap \left \{ \mo_{Y, \alpha}(h_{K_r}) \geq \frac78 \right \}, \text{ and } \\
  H_0 &:= \left \{ \mo_{0}(h_{E}) \in \brackets{\frac{3}{4}\xi, \frac{5}{4}\xi } \right \} \cap \left \{  \mo_0(h_{K_r}) \geq 1 - \frac{\zeta}{10}  \right \}.
\end{align*}
The results in the two cases are summarized in the two lemmas below. 
\begin{lemma}
  \label{lem:stage3_case1_nu}
  If $\thirdT \sigma^2 \leq \frac{1}{512}$ and $\zeta \leq 0.1 \xi$, then 
  \begin{align*}
    \Prob_{Y \sim \rho}\parenth{ H_1 \mid H_0} \geq 0.1 \xi.
  \end{align*}
\end{lemma}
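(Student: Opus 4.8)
\textbf{Proof plan for Lemma~\ref{lem:stage3_case1_nu}.}

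The plan is to exploit the fact that when $\thirdT\sigma^2 \leq \frac{1}{512}$, the final density $\mo_{Y,\thirdT}$ is only a mild tilt of the starting density $\mo_0$, so both $h_E$ and $h_{K_r}$ integrals move very little between time $0$ and time $\thirdT$. The argument has two ingredients: first, that with probability at least $0.1\xi$ the starting configuration event $H_0$ holds; second, that conditionally on $H_0$ the small value of $\thirdT\sigma^2$ forces $H_1$ deterministically (or with overwhelming probability). I would begin by recalling from Lemma~\ref{lem:identification_nloc_vs_1loc} that $\mo_0$ has variance $1$ and that $\mo_0(h_E) = \hmu_{T_2}(E) \in [\xi - \xi/4, \xi + \xi/4] \subseteq [\frac34\xi, \frac54\xi]$ and $\mo_0(h_{K_r}) = \hmu_{T_2}(K_r) \geq 1 - \zeta/10$ \emph{on the event $W_2$}; however, since we are now working with $Y \sim \rho$ and the statement is phrased with the randomness only in $Y$, I need to be careful that $H_0$ is actually a statement about the (random) starting density $\mo_0$, which is determined by Stage~1 and Stage~2. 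So the first step is to observe that $H_0$ is implied by $W_2$ together with $\zeta \leq 0.1\xi$, hence $\Prob(H_0) \geq \Prob(W_2)$; but since the lemma conditions on $H_0$, what I really need is just to set up the conditional computation correctly and then show $\Prob_{Y\sim\rho}(H_1 \mid H_0)$ is large.

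The core estimate is the following. Conditioned on $H_0$, I want to show that for a set of $y$ of $\rho$-probability at least $0.1\xi$, one has $\mo_{y,\thirdT}(h_E) \in [\frac12\xi, \frac34]$ and $\mo_{y,\thirdT}(h_{K_r}) \geq \frac78$. Using Lemma~\ref{lem:lipschitzness_of_G}, for any $y$ we have $|\mo_{y,\thirdT}(h_S) - \mo_{0,\thirdT}(h_S)| \leq \sqrt{\thirdT\sigma^2}\,|y|$. I also need to control $|\mo_{0,\thirdT}(h_S) - \mo_0(h_S)|$, i.e. the drift from re-centering at $0$ versus the true starting point; since $\mo_0$ has variance $1$ and is logconcave, tilting by $e^{-\thirdT\sigma^2 z^2}$ with $\thirdT\sigma^2 \leq \frac{1}{512}$ changes any bounded test-function integral by at most $O(\sqrt{\thirdT\sigma^2})$ — this is a one-dimensional logconcave perturbation estimate, presumably one of the lemmas collected in Appendix~\ref{app:properties_of_logconcave}. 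Combining, for $|y| \leq R$ with $R$ a suitable constant, $|\mo_{y,\thirdT}(h_S) - \mo_0(h_S)| \leq C\sqrt{\thirdT\sigma^2}(1+R) \leq \frac{\xi}{8}$ provided $R$ is at most a small constant multiple of $\xi/\sqrt{\thirdT\sigma^2}$, which because $\thirdT\sigma^2 \leq \frac{1}{512}$ is at least a constant times $\xi$. Then on $H_0$: $\mo_{y,\thirdT}(h_E) \in [\frac34\xi - \frac{\xi}{8}, \frac54\xi + \frac{\xi}{8}] \subseteq [\frac12\xi, 2\xi] \subseteq [\frac12\xi,\frac34]$ (using $\xi \leq \frac12$), and $\mo_{y,\thirdT}(h_{K_r}) \geq 1 - \zeta/10 - \frac{\xi}{8} \geq \frac78$ (using $\zeta \leq 0.1\xi \leq 0.05$). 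Finally, $\rho = \mo_0 * \Normal(0, \frac{1}{\thirdT\sigma^2})$, and the Gaussian has standard deviation $1/\sqrt{\thirdT\sigma^2} \geq \sqrt{512}$, so the $\rho$-mass of the interval $\{|y| \leq R\}$ with $R$ of order $\xi$ is at least a constant times $R\sqrt{\thirdT\sigma^2} \gtrsim \xi$ (the starting density $\mo_0$ contributes a constant factor since it is an isotropic-variance logconcave density, so its convolution with a wide Gaussian has density $\gtrsim \sqrt{\thirdT\sigma^2}$ near the origin up to constants). Chasing the constants so that the final bound is at least $0.1\xi$ completes the proof.

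The step I expect to be the main obstacle is the perturbation estimate $|\mo_{0,\thirdT}(h_S) - \mo_0(h_S)| = O(\sqrt{\thirdT\sigma^2})$ and, relatedly, pinning down the correct dependence in Lemma~\ref{lem:lipschitzness_of_G} so that the admissible range of $|y|$ is genuinely of order $\xi/\sqrt{\thirdT\sigma^2}$ rather than something smaller. One has to be careful because $h_S$ is an arbitrary $[0,1]$-valued function, not an indicator of an interval, so one cannot directly use one-sided tail bounds; instead I would bound $\int |\mo_{0,\thirdT}(z) - \mo_0(z)|\,dz$ in total variation, using that tilting a variance-$1$ logconcave density by a Gaussian factor $e^{-a z^2}$ with $a \leq \frac{1}{512}$ moves it by $O(\sqrt a)$ in total variation — this follows from controlling the change in the log-density and in the normalizing constant, and is exactly the kind of statement isolated in Appendix~\ref{app:properties_of_logconcave}. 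A secondary subtlety is the lower bound on the density of $\rho$ near $0$: I need that $\mo_0 * \Normal(0,1/(\thirdT\sigma^2))$ has density $\gtrsim \sqrt{\thirdT\sigma^2}$ on an interval of length $\gtrsim \xi/\sqrt{\thirdT\sigma^2}$ around $0$, which holds because $\mo_0$ is a probability density with bounded variance so at least half its mass lies in $[-2,2]$, and convolving that with the wide Gaussian (whose density is $\gtrsim \sqrt{\thirdT\sigma^2}$ on $[-10,10]$, say) yields the claim. Modulo these two analytic inputs, the rest is bookkeeping with the explicitly chosen constants.
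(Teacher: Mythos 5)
Your approach has a genuine gap, and it sits exactly at the step you flag as ``the main obstacle'': the deterministic perturbation estimate cannot deliver the lower bound you need. You want, for all $y$ in an interval of $\rho$-measure $\gtrsim \xi$ around the center of $\mo_0$, the bound $\mo_{y,\thirdT}(h_E) \geq \tfrac12\xi$, obtained from $|\mo_{y,\thirdT}(h_E) - \mo_0(h_E)| \leq C\sqrt{\thirdT\sigma^2}\,(1+R) \leq \xi/8$. But the hypothesis only gives $\thirdT\sigma^2 \leq 1/512$, a \emph{constant}; the additive term $C\sqrt{\thirdT\sigma^2}$ does not shrink with $R$ and is not $\leq \xi/8$ once $\xi$ is small. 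More fundamentally, no estimate of the form $|\mo_{m,\thirdT}(h_E)-\mo_0(h_E)| = o(\xi)$ can hold (here $m = \bb(\mo_0)$): the tilt $e^{-a(z-m)^2}$ with $a=\thirdT\sigma^2$ acts multiplicatively, and if the $\approx\xi$ units of mass of $\mo_0 h_E$ sit at distance $D \approx \log(1/\xi)$ from $m$ --- which the one-dimensional logconcave tail bound permits --- the numerator is depressed by a factor $\approx e^{-aD^2}$, which falls below $1/2$, and hence $\mo_{m,\thirdT}(h_E)$ below $\tfrac12\xi$, as soon as $\xi \leq e^{-c/\sqrt{a}}$. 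The Lipschitz bound then moves the value by only $O(\xi)$ across your interval, so the entire interval can fail $H_1$. In short, a Gaussian tilt can shave a constant factor off the measure of a set of measure $\xi$, which is fatal when the threshold is $\tfrac12\xi$; the set of good $y$ is genuinely not an interval around the center. (A secondary slip: you center at $0$ and claim half of $\mo_0$'s mass lies in $[-2,2]$, but only the variance of $\mo_0$ is normalized, not its mean, so everything must be recentered at $\bb(\mo_0)$.)

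The paper's route is stochastic rather than deterministic, and this is not cosmetic. It sets $g_t := \mo_t(h_E)$, a bounded martingale with $g_0 \in [\tfrac34\xi,\tfrac54\xi]$ and total quadratic variation $\EE[g]_{\thirdT} \leq \thirdT\sigma^2 \leq 1/512$. Optional stopping at the first exit from $(\tfrac12\xi,\tfrac12)$ gives that with probability at least $\tfrac12\xi$ the martingale either never exits or exits at the top; in the latter case the small remaining quadratic variation keeps $g_{\thirdT}\in[3/8,3/4]$ with probability at least $0.4$ (reflection principle plus Markov's inequality on $[g]_{\thirdT}$). This exploits the exact identity $\EE[g_{\thirdT}]=g_0$, which your TV and Lipschitz estimates discard. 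Your treatment of the other two constraints would survive --- the upper bound $\mo_{y,\thirdT}(h_E)\leq 3/4$ and the event $\mo_{y,\thirdT}(h_{K_r})\geq 7/8$ involve constant thresholds, so an $O(\thirdT\sigma^2)$ perturbation is harmless, and the paper handles $K_r$ simply by Markov's inequality applied to the martingale $\mo_t(h_{K_r})$ --- but the lower bound on $\mo_{Y,\thirdT}(h_E)$ requires the martingale argument.
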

\begin{lemma}
  \label{lem:stage3_case2_nu}
  If $\thirdT \sigma^2 \geq \frac{1}{512}$, $\zeta \leq \xi^2/\sqrt{\log(10^4/\xi)}/10^8$, then there exists a universal constant $C>0$ such that
  \begin{align*}
    \Prob_{Y \sim \rho}\parenth{H_1 | H_0 } \geq \frac{C}{\log(\frac{1}{\xi})}\sqrt{\frac{T_1}{n}} \xi.
  \end{align*}
\end{lemma}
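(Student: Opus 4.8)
\noindent\textbf{Proof plan for Lemma~\ref{lem:stage3_case2_nu}.}
Conditioned on $H_0$ we have $\mo_0(h_E)\in[\tfrac34\xi,\tfrac54\xi]$ and $\mo_0(h_{K_r})\ge1-\zeta/10$, by Lemma~\ref{lem:identification_nloc_vs_1loc} the measure $\mo_0$ is logconcave with $\Var(\mo_0)=1$, by Eq.~\eqref{eq:def_mo_y_t} the localized measure $\mo_{y,\thirdT}$ is $\mo_0$ reweighted by a Gaussian factor of variance $w^2:=(2\thirdT\sigma^2)^{-1}$, and by Eq.~\eqref{eq:def_rho} the driving value $\moy_\thirdT$ has law $\rho=\mo_0*\Normal(0,(\thirdT\sigma^2)^{-1})$. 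The plan is to produce an interval $I\subseteq\RR$ of centers on which \emph{both} $\mo_{y,\thirdT}(h_E)\in[\tfrac12\xi,\tfrac34]$ and $\mo_{y,\thirdT}(h_{K_r})\ge\tfrac78$, and then to lower bound $\rho(I)$. Two preliminary reductions: (i) since $\Exs_{Y\sim\rho}[\mo_{Y,\thirdT}(h_{K_r})]=\mo_0(h_{K_r})\ge1-\zeta/10$ by the martingale property, and $\zeta$ is negligible against $\xi$, Markov's inequality shows that $\{y:\mo_{y,\thirdT}(h_{K_r})<\tfrac78\}$ has $\rho$-measure $O(\zeta)$, well below the target, so this constraint is essentially free and the work lies in the $h_E$ constraint; (ii) since $\thirdT\le n$ and $\sigma^2\le 1/T_1$ by Eq.~\eqref{eq:third_SL_stage_sigma^2_bound}, one has $w\ge\tfrac1{\sqrt2}\sqrt{T_1/n}$, so it suffices to build a good interval with $\rho(I)\gtrsim\xi w/\log(1/\xi)$.

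Locating the mass of $h_E$: logconcavity and $\Var(\mo_0)=1$ give $\mo_0(\{\mo_0<\eta\})=O(\eta\log(1/\eta))$; taking $\eta\asymp\xi/\log(1/\xi)$ makes this $<\tfrac18\mo_0(h_E)$, so a constant fraction of $\int h_E\,d\mo_0$ is carried by the interval $R:=\{\mo_0\ge\eta\}$, which has length $O(\log(1/\xi))$, on which $\mo_0\gtrsim\xi/\log(1/\xi)$ and, since $\rho$ is a Gaussian smoothing of $\mo_0$, also $\rho\gtrsim\xi/\log(1/\xi)$ on most of $R$. Now split on the size of $w$. If $w\gtrsim1$ (the sub-range $\tfrac1{512}\le\thirdT\sigma^2\lesssim1$), the Gaussian reweighting barely perturbs $\mo_0$ at scales $\lesssim1$, so for $y$ in a length-$\Theta(1)$ sub-interval of $R$ the ratio $\mo_{y,\thirdT}(h_E)/\mo_0(h_E)$ is within a constant factor of $1$, hence $\mo_{y,\thirdT}(h_E)\in[\tfrac12\xi,\tfrac34]$; there $\rho$ has density $\Theta(1)$ (it is a unit-variance-or-wider logconcave density), so $\rho(I)=\Theta(1)$, far above the target. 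The substantial case is $w\ll1$, where the localized measure genuinely concentrates and $\mo_{y,\thirdT}(h_E)$ can range over all of $[0,1]$ as $y$ crosses $R$.

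In the case $w\ll1$, I would combine Lemma~\ref{lem:lipschitzness_of_G} ($y\mapsto\mo_{y,\thirdT}(h_E)$ is $(\sqrt2\,w)^{-1}$-Lipschitz) with the sharper identity obtained by writing $\mo_{y,\thirdT}(h_E)=\tilde g(y)/\tilde m(y)$, where $\tilde g$ and $\tilde m$ are the convolutions of $h_E\mo_0$ and $\mo_0$ with the Gaussian density $\phi_w$ of variance $w^2$: differentiating gives $\tfrac{d}{dy}\mo_{y,\thirdT}(h_E)=\mo_{y,\thirdT}(h_E)\cdot w^{-2}\big(\bb(\nu_y)-\bb(\mu_y)\big)$ for the localized measures $\mu_y\propto\mo_0\phi_w(y-\cdot)$ and $\nu_y\propto h_E\mo_0\phi_w(y-\cdot)$, and a Brascamp--Lieb bound ($\Var(\mu_y)\le w^2$) together with Cauchy--Schwarz bounds this by $O\big(w^{-1}\sqrt{\mo_{y,\thirdT}(h_E)}\big)$, i.e.\ $y\mapsto\sqrt{\mo_{y,\thirdT}(h_E)}$ is $O(1/w)$-Lipschitz. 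A continuity argument then shows $\mo_{y,\thirdT}(h_E)$ must enter the window $[\tfrac12\xi,\tfrac34]$: were it $<\tfrac12\xi$ for all $y$ its $\rho$-average would be $<\tfrac12\xi$, and were it $>\tfrac34$ for all $y$ its $\rho$-average would exceed $\tfrac34>\tfrac54\xi$ (using $\xi\le\tfrac12$), both contradicting $\Exs_\rho[\mo_{Y,\thirdT}(h_E)]=\mo_0(h_E)$; moreover the averaging identity restricted to $R$ lets us pick such a center $y^\ast\in R$ with $\mo_{y^\ast,\thirdT}(h_E)$ bounded away from $\tfrac12\xi$ and from $\tfrac34$ by definite amounts, and the square-root-Lipschitz bound propagates this to an interval $I\ni y^\ast$, $I\subseteq R$, of length $\gtrsim w$. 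Intersecting $I$ with the co-$O(\zeta)$ set from reduction (i) and multiplying $|I|\gtrsim w$ by $\rho\gtrsim\xi/\log(1/\xi)$ on $R$ yields $\rho(I)\gtrsim\xi w/\log(1/\xi)\ge c\,\xi\sqrt{T_1/n}/\log(1/\xi)$.

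I expect the main obstacle to be precisely the extraction of the good center $y^\ast$ inside $R$ with the value of $\mo_{y^\ast,\thirdT}(h_E)$ controlled on \emph{both} sides: purely martingale/Markov reasoning on $\mo_{\moy_\thirdT,\thirdT}(h_E)=\hmu_{T_3}(E)$ only gives that it lands in $[\tfrac12\xi,\tfrac34]$ with probability $\ge c\xi-c'\xi$, which is uninformative, so one genuinely needs the one-dimensional structure of the SL process on a logconcave measure --- continuity of paths, the controlled quadratic variation of $\mo_t(h_E)$, and the Lipschitz dependence of $\mo_t$ on the driving path from Lemma~\ref{lem:lipschitzness_of_G} --- together with the properties of one-dimensional logconcave measures collected in Appendix~\ref{app:properties_of_logconcave}, to rule out the scenario in which the terminal value concentrates entirely outside $[\tfrac12\xi,\tfrac34]$. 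Making the interior separations and interval lengths quantitatively match the claimed $\xi/\log(1/\xi)$ factor (rather than a worse power of $\xi$) is the delicate point.
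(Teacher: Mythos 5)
Your treatment of the $h_E$ constraint is broadly the paper's argument in disguise: the paper also partitions the good interval into $I_1$ (value $\le \tfrac12\xi$), $I_2$ (value in $[\tfrac12\xi,\tfrac34]$), $I_3$ (value $\ge \tfrac34$), shows via Markov that $I_1$ and $I_3$ each carry mass $\gtrsim \xi$ unless $I_2$ already does, uses Lemma~\ref{lem:lipschitzness_of_G} to force $d(I_1,I_3)\ge \tfrac12\sqrt{T_1/n}$, and then applies the one-dimensional isoperimetric inequality for the logconcave measure $\rho$ restricted to an interval of length $O(\log\tfrac1\xi)$ to get $\rho(I_2)\gtrsim \sqrt{T_1/n}\,\xi/\log\tfrac1\xi$. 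Your square-root-Lipschitz refinement and the extraction of a single center $y^\ast$ are unnecessary; working with the sets $I_1,I_3$ directly and invoking the isoperimetric inequality avoids having to locate $y^\ast$ inside the high-density part of $R$.

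The genuine gap is your reduction (i), which you dismiss as ``essentially free.'' Markov's inequality does give $\rho\bigl(\{y:\mo_{y,\thirdT}(h_{K_r})<\tfrac78\}\bigr)=O(\zeta)$, but this is \emph{not} well below the target: the target is $\rho(I_2)\gtrsim \xi\sqrt{T_1/n}/\log\tfrac1\xi$ with $T_1=\xi/(\kappa_n^2\log n)$, which decays like $n^{-1/2}$, whereas $\zeta$ is only assumed $\le \xi^2/\sqrt{\log(10^4/\xi)}/10^8$ with no $n$-dependence. For fixed $\xi$ and large $n$ the bad set can therefore have measure vastly exceeding $\rho(I_2)$, and nothing in a pure measure bound prevents it from covering $I_2$ entirely. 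This is exactly the ``main caveat'' the paper flags before Lemma~\ref{lem:case2_nu_t_y_Kr_large}: one must upgrade the Markov bound to the \emph{pointwise} statement that $\mo_{y,\thirdT}(h_{K_r})\ge \tfrac78$ for \emph{all} $y$ in an interval of $\rho$-measure $1-\xi/10$, so that every sub-interval (in particular $I_2$) inherits it for free. That upgrade is where the real work sits: in the moderate regime one converts the measure-$2\zeta$ bad set into short bad sub-intervals via the quantile density lower bound (Lemma~\ref{lem:isotropic_1d_delta_quantile_large}) and controls the dip via the Lipschitz bound; in the regime $\thirdT\sigma^2\gtrsim \log(1/\xi)/\xi^2$, where the Lipschitz constant is useless, one needs the convexity of $K_r$ — so that the conditioned pushforward $\tilde\mo\propto h_{K_r}\mo_0$ is logconcave — together with the two-density comparison Lemma~\ref{lem:interval_overlap_two_close_logconcave}. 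Your proposal never invokes the convexity of $K_r$, and without it this case cannot be closed.
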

\begin{proof}[Proof of Lemma~\ref{lem:stage3}]
  Using the identification in Lemma~\ref{lem:identification_nloc_vs_1loc} between the terms in $\mo$ and those in $\hmu$, it is clear that Lemma~\ref{lem:stage3} follows from the two lemmas above.
\end{proof}

\subsubsection{Proof of Lemma~\ref{lem:stage3_case1_nu} and Lemma~\ref{lem:stage3_case2_nu}}
First, we prove Lemma~\ref{lem:stage3_case1_nu} where $\thirdT \sigma^2 \leq \frac{1}{512}$. The bound is proven by direct analysis of the process $\mo_t(h_E)$ via the stochastic differential equation~\eqref{eq:mo_sde}. 

\begin{proof}[Proof of Lemma~\ref{lem:stage3_case1_nu}]
  Let $g_t \defn \mo_t(h_E)$. According to Eq.~\eqref{eq:mo_sde}, $g_t$ satisfies
  \begin{align*}
    d g_t = \int_{\real} \sigma (z - \bb(\mo_t)) \cdot d \bar{W}_t h_E(z) \mo_t(z) dz.
  \end{align*}
  It is a martingale for $t\geq 0$, with $g_0 = w_0(h_E) = \hmu_{T_2}(E) \in [\frac{3}{4} \xi, \frac{5}{4} \xi] $. We first claim that for any $t_1 \in [0, \thirdT]$ to $\thirdT$, we have almost surely that
  \begin{align}
    \label{eq:1d_evolve_mo_g}
    g_{t_1} \in [1/2, 5/8] ~~ \Rightarrow ~~ \Prob(g_\thirdT \in [3/8, 6/8] \mid \omega_{t_1} ) \geq 0.4.
  \end{align}
  The proof of the claim~\eqref{eq:1d_evolve_mo_g} is deferred to the end. Assuming the claim for now, we complete the proof of Lemma~\ref{lem:stage3_case1_nu}.
  
  If $g_0 \in [1/2, 5/8]$, then we directly apply the claim~\eqref{eq:1d_evolve_mo_g} from time $0$ to $\thirdT$ to obtain
  \begin{align*}
    \Prob(g_\thirdT \in [3/8, 6/8]) \geq 0.4. 
  \end{align*}
  Otherwise, $g_0 \in [\frac34 \xi , 1/2)$, we define the stopping time
  \begin{align}
    \label{eq:def_stopping_tau}
    \boldsymbol{\tau} \defn \min \braces{ t \in [0, \thirdT] \mid  g_t = \frac12 \xi \text{ or } g_t = \frac{1}{2} } \wedge \alpha.
  \end{align}
  Since $(g_t)_{t\geq 0}$ is a martingale with $\Exs[g_t] = g_0 \in [\tfrac34 \xi , 1/2)$, applying the optimal stopping time theorem for bounded martingale, we have
  \begin{align*}
    \Exs[g_{\boldsymbol{\tau}}] = g_0 \in [\frac34 \xi , 1/2).
  \end{align*}
  Separating the above expectation into three cases $g_{\boldsymbol{\tau}} = \frac12 \xi$, $g_{\boldsymbol{\tau}} = \frac12$ or $\boldsymbol{\tau} = \thirdT$, we obtain
  \begin{align*}
    \Prob\left (g_{\boldsymbol{\tau}} = \frac{1}{2} \text{ or } \boldsymbol{\tau} = \thirdT \right) \geq \frac12 \xi.
  \end{align*}
  Under the above event, if $\boldsymbol{\tau} = \thirdT$, we have $g_\alpha \in (\frac12 \xi , \frac12)$. Otherwise, we apply the claim~\eqref{eq:1d_evolve_mo_g} from time $\boldsymbol{\tau}$ to $\thirdT$ to obtain
  \begin{align*}
    \Prob(g_\thirdT \in [3/8, 6/8]) &=  \Prob(g_\thirdT \in [3/8, 6/8] \mid g_{\boldsymbol{\tau}} = \frac12 \text{ or } \boldsymbol{\tau} = \thirdT )  \Prob(g_{\boldsymbol{\tau}} = \frac12 \text{ or } \boldsymbol{\tau} = \thirdT )  \\
    &\geq 0.4 \Prob(g_{\boldsymbol{\tau}} = \frac12 \text{ or } \boldsymbol{\tau} = \thirdT ) \\
    & \geq 0.2 \xi.
  \end{align*}
  
  Combining all the cases above, we conclude that $g_\thirdT \in [\frac12 \xi, \frac34]$ given $g_0 \in [\frac34 \xi, \frac54 \xi]$ with probability at least $0.2 \xi$. On the other hand, since $(\mo_t(h_{K_r}))_{t\geq 0}$ is a martingale, we have $\EE[\mo_\alpha(h_{K_r}) | H_0] \geq 1-\zeta/10$. Thus, by Markov's inequality and by the fact that $\mo_\alpha(h_{K_r}) \leq 1$ almost surely,
  \begin{align*}
    \Prob(\mo_\alpha(h_{K_r}) \geq 7/8) \geq 1-\zeta.
  \end{align*}
  Recalling that $\zeta \leq 0.1 \xi$, Lemma~\ref{lem:stage3_case1_nu} follows from a union bound. 

  \paragraph{Proof of the claim~\eqref{eq:1d_evolve_mo_g}:} The quadratic variation of $g_t$ is 
  \begin{align*}
    d[g]_t &= \abss{\int \sigma (z - \bb(\mo_t)) h_E(z) \mo_t(z) dz }^2 dt \\
    &\overset{(i)}{\leq} \int \sigma^2 (z - \bb(\mo_t))^2 \mo_t(z) dz \cdot \int h_E(z)^2 \mo_t(z) dz  dt \\
    &\leq  \sigma^2 \Var(\mo_t) dt.
  \end{align*}

(i) follows from the Cauchy-Schwarz inequality. To control $\varphi_t \defn \Var(\mo_t)$, we observe that it satisfies 
\begin{align*}
  d \varphi_t =  \int_{\real} \sigma (z - \bb(\mo_t))^3 d\bar{W}_t \mo_t(z)dz - \sigma^2 \varphi_t^2 dt. 
\end{align*}
We have $\Exs d\varphi_t \leq 0$, hence $\Exs [\varphi_t] \leq \varphi_0 \leq 1$. 

Consequently, $\Exs [g]_\thirdT \leq \frac{1}{512}$. 
Conditioned on $g_0 \in [\frac12, \frac58]$, we have 
\begin{align*}
  \Prob \left (\frac{3}{8} \leq g_\thirdT \leq \frac{6}{8} \right) &\geq \Prob \parenth{-\frac{1}{8} \leq \tilde{W}_{[g]_\alpha} \leq \frac{1}{8}} \\
  &=1 - \Prob \left (\max_{0 \leq \ell \leq \frac{1}{256} } \abss{\tilde{W}_\ell } > \frac{1}{8} \right) - \Prob \left ([g]_\thirdT > \frac{1}{256} \right) \\
  &\overset{(i)}{\geq} 1 - 4 \Prob(\tilde{W}_{\frac{1}{256}}  > \frac{1}{8}) - \Prob([g]_\thirdT > \frac{1}{256}) \\
  &\overset{(ii)}{\geq} 0.9 - 0.5 = 0.4 
\end{align*}
(i) follows from the reflection principle. (ii) follows from the tail bound of the normal distribution $\Prob_{X \sim \Normal(0, 1)}( X > 2) \leq 0.023$ and the Markov inequality. 
\end{proof}

Next, we prove Lemma~\ref{lem:stage3_case2_nu} which concerns the case that $\frac{1}{512} \leq \thirdT \sigma^2$. The main idea in the proof is to use the Lipschitz bound established in Lemma \ref{lem:lipschitzness_of_G}: Since the function $y \to \mo_{y, \thirdT}(h_{E})$ is Lipschitz and since it must attain values both close to $0$ and to $1$, we conclude that there exists an interval with non-negligible mass, on which $\mo_{y, \thirdT}(E)$ both is bounded away from both $0$ and $1$. The main caveat is then to show that $\mo_{y, \thirdT}(h_{K_r})$ can be made large at the same time. This crucially relies on the convexity of $K_r$, used in the following lemma, whose proof is found in the next subsection.
\begin{lemma}
  \label{lem:case2_nu_t_y_Kr_large}
  Suppose that (i) $\frac{1}{512} \leq \thirdT \sigma^2$, (ii) $\zeta \leq \xi^2/\sqrt{\log(10^4/\xi)}/10^8$ and (iii) $\mo_0(h_{K_r}) \geq 1- \zeta/10$. Then, there exists an interval $I \subset \real$ such that $\rho(I) \geq 1 - \xi/10$ and of length at most $100 (4 + \log\frac{1}{\xi})$, such that 
  \begin{align*}
    \mo_{y, \thirdT}(h_{K_r}) \geq 7/8, ~~ \forall y \in I.
  \end{align*}
\end{lemma}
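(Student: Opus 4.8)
The plan is to produce the interval $I$ in three moves: first isolate a genuine \emph{interval} $J\subset\RR$ on which the slice–conditional probability $h_{K_r}$ is already $\ge 63/64$ and which carries $\mo_0$-mass $1-O(\zeta)$; then transport this to the tilted measures $\mo_{y,\thirdT}$ using a monotone–likelihood–ratio argument together with the martingale property; and finally intersect with a bulk interval of $\rho$ to control the length. Throughout write $a:=\thirdT\sigma^2\ge 1/512$ (hypothesis (i)), recall $\mo_{y,\thirdT}(z)\propto \mo_0(z)e^{-a(z-y)^2}$, recall $\rho=\mo_0*\Normal(0,1/a)$ with $\Var(\rho)=1+1/a\le 513$ (using $\Var(\mo_0)=1$ from Lemma~\ref{lem:identification_nloc_vs_1loc}), and recall that $\moy_\thirdT\sim\rho$ determines $\mo_\thirdT=\mo_{\moy_\thirdT,\thirdT}$.

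\textbf{Step 1 (the bulk interval $J$; this is where convexity of $K_r$ is used).} Set $\psi(z):=h_{K_r}(z)\,\mo_0(z)$. Since $K_r$ is convex and $\hmu_{T_2}$ is logconcave, $\psi$ is (a constant times) the marginal of the logconcave function $\mathbf 1_{K_r}\hmu_{T_2}$ onto the $\theta$-axis, hence logconcave by Prékopa's theorem; moreover $0\le\psi\le\mo_0$ and $\int(\mo_0-\psi)=\mo_0(1-h_{K_r})\le\zeta/10$ by hypothesis (iii). I claim that $\{z:h_{K_r}(z)\ge 63/64\}$ contains an interval $J$ with $\mo_0(J)\ge 1-C\zeta$; equivalently, the defect set $\{z:\mo_0(z)-\psi(z)>\mo_0(z)/64\}$ is confined to the two extreme tails of $\mo_0$. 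Indeed, if $h_{K_r}(z_*)<63/64$ then, taking $z_*$ to the left of the mode of $\psi$ (the other case is symmetric), logconcavity of $\psi$ gives $\psi\le\psi(z_*)<\tfrac{63}{64}\mo_0(z_*)$ on all of $(-\infty,z_*]$, whence $\mo_0-\psi\ge\mo_0-\tfrac{63}{64}\mo_0(z_*)$ there; integrating this over $(-\infty,z_*]$, combining with $\int(\mo_0-\psi)\le\zeta/10$, and invoking the one-dimensional logconcave tail estimates of Appendix~\ref{app:properties_of_logconcave} to bound how much $\mo_0$-mass can sit near $z_*$, one gets $\mo_0((-\infty,z_*])=O(\zeta)$, i.e.\ $z_*$ lies in the far left tail. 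On $J$ we then have $1-h_{K_r}\le 1/64$.

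\textbf{Step 2 (transfer to $\mo_{y,\thirdT}$).} Write $J=[\ell,u]$. For $y>y'$ the ratio $\mo_{y,\thirdT}(z)/\mo_{y',\thirdT}(z)\propto e^{2a(y-y')z}$ is increasing in $z$, so $\mo_{y,\thirdT}$ stochastically dominates $\mo_{y',\thirdT}$; hence $y\mapsto\mo_{y,\thirdT}((-\infty,\ell))$ is non-increasing and $y\mapsto\mo_{y,\thirdT}((u,\infty))$ is non-decreasing, so the set $I_1:=\{y:\mo_{y,\thirdT}((-\infty,\ell))\le 1/32\}\cap\{y:\mo_{y,\thirdT}((u,\infty))\le 1/32\}$ is an interval. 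On $I_1$ one has $\mo_{y,\thirdT}(J)\ge 15/16$, hence $\mo_{y,\thirdT}(h_{K_r})\ge\tfrac{63}{64}\cdot\tfrac{15}{16}>\tfrac78$. For the $\rho$-measure of $I_1$: since the SDE~\eqref{eq:mo_sde} for $\mo_t$ is drift-free, $(\mo_t(f))_{t\ge 0}$ is a martingale for every bounded $f$, so $\Exs_{Y\sim\rho}[\mo_{Y,\thirdT}(f)]=\mo_0(f)$; taking $f=\mathbf 1_{(-\infty,\ell)}$ and $f=\mathbf 1_{(u,\infty)}$ and applying Markov's inequality gives $\rho(I_1^c)\le 32\,\mo_0(J^c)\le 32C\zeta\le\xi/30$, the last inequality by hypothesis (ii) on $\zeta$.

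\textbf{Step 3 (length via a bulk interval of $\rho$).} Since $\rho$ is logconcave with $\Var(\rho)\le 513$, the standard sub-exponential concentration bound $\Prob_{Y\sim\rho}(|Y-\Exs Y|\ge t\sqrt{\Var\rho})\le e^{1-t}$ produces a centred interval $I_2$ with $\rho(I_2)\ge 1-\xi/100$ and $|I_2|\le 2\sqrt{513}\,(1+\log(100/\xi))$, which one checks is at most $100(4+\log\tfrac1\xi)$. Put $I:=I_1\cap I_2$: it is an interval of length $\le 100(4+\log\tfrac1\xi)$, with $\rho(I)\ge\rho(I_2)-\rho(I_1^c)\ge 1-\xi/100-\xi/30\ge 1-\xi/10$, and with $\mo_{y,\thirdT}(h_{K_r})\ge 7/8$ for every $y\in I$ because $I\subseteq I_1$; this is the desired interval.

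I expect Step 1 to be the main obstacle. That the \emph{dip-set} $\{h_{K_r}<63/64\}$ has small $\mo_0$-mass is immediate from Markov; the real content is that $h_{K_r}$ cannot dip below a constant \emph{in the bulk} of $\mo_0$, which is exactly where convexity of $K_r$ (through logconcavity of $\psi$) is indispensable, and turning the heuristic above into a clean quantitative bound on $\mo_0(J^c)$ is the bulk of the work. The remaining steps are soft: the MLR/stochastic-dominance observation, the martingale identity $\Exs_\rho[\mo_{Y,\thirdT}(f)]=\mo_0(f)$, Markov's inequality, and one-dimensional logconcave concentration.
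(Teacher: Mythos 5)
Your skeleton is genuinely different from the paper's. The paper splits Lemma~\ref{lem:case2_nu_t_y_Kr_large} into two sub-cases according to the size of $\thirdT\sigma^2$, using the Lipschitz bound of Lemma~\ref{lem:lipschitzness_of_G} to bridge small gaps in one regime and Gaussian localization of the integrals $\int\mo(z)e^{-\frac{\thirdT\sigma^2}{2}|z-y|^2}dz$ in the other. Your Steps 2--3 — stochastic monotonicity of $y\mapsto\mo_{y,\thirdT}$ so that $I_1$ is automatically an interval, the martingale identity $\Exs_{Y\sim\rho}[\mo_{Y,\thirdT}(f)]=\mo_0(f)$, Markov, and the logconcave tail bound for $\rho$ — are correct and would give a unified, case-free argument; this is an attractive simplification.

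The gap is Step 1, which you correctly flag as the crux but whose sketch does not work and whose claimed conclusion is too strong. First, the one-sided argument fails outright: from $\psi(z)\le\psi(z_*)<\tfrac{63}{64}\mo_0(z_*)$ on $(-\infty,z_*]$ you infer $\mo_0-\psi\ge\mo_0-\tfrac{63}{64}\mo_0(z_*)$ there, but $\mo_0(z)$ itself typically falls below $\tfrac{63}{64}\mo_0(z_*)$ for most $z<z_*$ (and the half-line has infinite length), so integrating gives nothing. The correct mechanism is two-sided: a dip of $h_{K_r}=\psi/\mo_0$ at $z_*$ that recovers on both sides within a short window forces a large upward jump of $(\log\psi-\log\mo_0)'$, which by concavity of $\log\psi$ forces a large downward jump of $(\log\mo_0)'$, which in turn forces one tail of $\mo_0$ past $z_*$ to be exponentially small — this is exactly the paper's Lemma~\ref{lem:interval_overlap_two_close_logconcave}, and it is a full page of work, not a soft step. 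Second, that mechanism only excludes dips outside the $[\delta,1-\delta]$-quantile window for $\delta\gtrsim\sqrt{\epsilon}$ (the condition $\epsilon\le\delta^2/10^5$ in Lemma~\ref{lem:interval_overlap_two_close_logconcave} is essentially sharp: a dip can sit at a $\Theta(\sqrt{\zeta})$-quantile), so the honest bound is $\mo_0(J^c)=O(\sqrt{\zeta})$, not $O(\zeta)$. Under hypothesis (ii) this gives $\mo_0(J^c)$ of order $\xi/50$, and after the factor $1/t$ from Markov in your Step 2 (where $t$ must satisfy $(1-2t)\cdot\tfrac{63}{64}\ge\tfrac78$, i.e.\ $t\le 7/128$) you get $\rho(I_1^c)\gtrsim 18\cdot\xi/50\approx\xi/3$, which blows the $\xi/10$ budget; moreover Lemma~\ref{lem:interval_overlap_two_close_logconcave} only delivers a ratio of $0.9$ on the bulk, not $63/64$, which makes the required $t$ even smaller. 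None of this is unfixable — a sharpened bulk-ratio lemma plus a stronger hypothesis on $\zeta$ would close it — but as written Step 1 is both unproved and quantitatively insufficient for Steps 2--3 to deliver $\rho(I)\ge1-\xi/10$.
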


\begin{proof}[Proof of Lemma~\ref{lem:stage3_case2_nu}]
  Let $I$ be the interval obtained by applying Lemma~\ref{lem:case2_nu_t_y_Kr_large}. We partition $I$ into three subsets
  \begin{align*}
    I_1 &\defn \braces{y \in \real \mid \mo_{y, \thirdT}(h_E) \leq 1/2 \xi} \cap I, \\
    I_2 &\defn \braces{y \in \real \mid \mo_{y, \thirdT}(h_E) \in [1/2 \xi, 3/4]} \cap I,\\
    I_3 &\defn \braces{y \in \real \mid \mo_{y, \thirdT}(h_E) \geq 3/4} \cap I.
  \end{align*}
  Since $\Exs[\mo_{Y, \thirdT}(h_E) | H_0] = \mo_0(h_E) \in [3/4\xi, 5/4\xi]$, Markov's inequality implies that
  \begin{align*}
    5/4 \xi \geq 0 + 3/4 \cdot \Prob(\mo_{Y, \thirdT}(h_E) > 3/4 | H_0).
  \end{align*}
  Recalling that the measure $\rho$ defined in Eq.~\eqref{eq:def_rho} satisfies $\rho(I) \geq 1- \frac{1}{10}\xi$, we obtain by a union bound that
  \begin{align*}
    \Prob(Y \in I_1 \cup I_2 | H_0) =  \Prob(\mo_{Y, \thirdT}(h_E) \leq 3/4 | H_0) - \frac{1}{10} \xi \geq 1 - \frac{5}{3} \xi - \frac{1}{10} \xi \geq 1/10,
  \end{align*}
  where $Y$ is $\rho$-distributed. If $\Prob(Y \in I_2 | H_0) \geq 1/20$, then we are done. Otherwise, we obtain
  \begin{align}
    \label{eq:I1_large}
    \Prob(Y \in I_1) \geq 1/20.
  \end{align}
  Similarly, since
  \begin{align*}
    3/4 \xi \leq \Exs [\mo_{Y, \thirdT}(h_E)] \leq 1 \cdot \Prob(\mo_{Y, \thirdT}(h_E) \geq 1/2 \xi) + 1/2 \xi \cdot \Prob(\mo_{Y, \thirdT}(h_E) \leq 1/2 \xi),
  \end{align*}
  we obtain
  \begin{align*}
    \Prob(Y \in I_2 \cup I_3) = \Prob(\mo_{Y, \thirdT}(h_E) \geq 1/2 \xi) - \frac{1}{10}\xi \geq \frac{1}{10} \xi. 
  \end{align*}
  If $\Prob(Y \in I_2) \geq \frac{1}{20} \xi$, then we are done. Otherwise, we obtain
  \begin{align}
    \label{eq:I3_large}
    \Prob(Y \in I_3) \geq \frac{1}{20} \xi.
  \end{align}
  Observe that the distance between $I_1$ and $I_3$ is large, because for any $y \in I_1$ and $\tilde{y} \in I_3$, we have
  \begin{align*}
    \mo_{\tilde{y}, \thirdT}(h_E) - \mo_{y, \thirdT}(h_E) \geq \frac34 - \frac12 \xi \geq \frac12.
  \end{align*}
  And according to Lemma~\ref{lem:lipschitzness_of_G}, $y \mapsto \mo_{y, \thirdT}(h_E)$ is $\sqrt{\thirdT \sigma^2} \leq \sqrt{n/T_1}$-Lipschitz as $\thirdT \leq n$ and $\sigma^2 \leq \frac{1}{T_1}$ in Eq.~\eqref{eq:third_SL_stage_sigma^2_bound}. Hence,
  \begin{align}
    \label{eq:distance_I1_I3}
    \abss{\tilde{y} - y} \geq \frac{1}{2} \sqrt{\frac{T_1}{n}}.
  \end{align}
  Let $(\mathbf{1}_I \cdot \mo)$ be the measure $\mo$ restricted on the set $I$. Finally, we consider $(\mathbf{1}_I \cdot \mo) * \Normal\parenth{0, \frac{1}{\alpha \sigma^2}}$, which satisfies a diameter isoperimetric inequality (see Theorem 4.2 in~\cite{vempala2005geometric}). Hence,
  \begin{align*}
    \rho(I_2) &\geq \frac{2 d(I_1, I_3)}{100 \parenth{1 +\log(\frac{1}{\xi}) } } \min\braces{\rho(I_1), \rho(I_3)} \\
    &\geq \frac{C}{\log(\frac{1}{\xi})}\sqrt{\frac{T_1}{n}} \xi,
  \end{align*} 
  where the inequality follows from Eq.~\eqref{eq:I1_large}~\eqref{eq:I3_large} and~\eqref{eq:distance_I1_I3}.
\end{proof}

\subsubsection{Additional proofs in the third SL stage}
\label{sub:additional_proofs_in_3_stage}
In this subsection, we complete the missing proofs of the lemmas in the previous subsection.
\begin{proof}[Proof of Lemma~\ref{lem:identification_nloc_vs_1loc}]
  Based on the $\mo_t$ definition~\eqref{eq:def_mo_1d_pushforward}, we can express the mean and variance of $\mo_t$ using $\hmu_{T_2+t}$ as follows.
  \begin{align*}
    \bb(\mo_t) &= \int z \mo_t(z) dz \\
    &= \int \int_{\theta \tp x / \sigma  = z} \theta\tp x / \sigma  \hmu_{T_2+t}(x) dx dz \\
    &=  \theta \tp \bb(\hmu_{T_2+t})/\sigma \\
    \Var(\mo_t) &= \int \parenth{z - \bb(\mo_t)}\parenth{z - \bb(\mo_t)}  \mo_t(z) dz  \\
    &= \int \int_{\sigma \theta \tp x = z} \frac{1}{\sigma^2} \theta \tp (x - \bb(\hmu_t))(x - \bb(\hmu_t))\tp \theta \hmu_{T_2+t}(x) dx dz \\
    &=  \theta \tp \Cov(\hmu_{T_2+ t}) \theta / \sigma^2.
  \end{align*}
  We deduce that $\Var(\mo_0) = 1$ as $\sigma^2 =  \theta \tp \Cov(\hmu_{T_2+t}) \theta$.
  For any $z \in \real$, we have
  \begin{align*}
    d \mo_t(z) &= \int_{\theta \tp x/\sigma  = z} d \hmu_{T_2+t}(x) dx \\
    &= \int_{\theta \tp x/ \sigma  = z} (x- \bb(\hmu_{T_2+t}))\tp \theta \theta\tp dW_t \hmu_{T_2+t}(x) dx \\
    &= \int_{\theta \tp x /\sigma  = z} \sigma (z - \bb(\mo_t)) \theta \tp dW_t \hmu_{T_2+t}(x) dx \\
    &= \sigma (z - \bb(\mo_t)) \theta \tp dW_t \mo_t(z).
  \end{align*}
  Based on the above calculation, $(\mo_t)_{t \geq 0}$ undergoes a 1-dimensional SL and the amount of Gaussian that it makes appear at time $t$ is $t \sigma^2$.

  Based on the $h_S$ definition, we can express $\hmu_{T_2+t}(S)$ using $\mo_t$ and $h_S$ as follows
  \begin{align*}
    \mo_{t}(h_S) &= \int h_S(z) \mo_t(z) dz \\
    &= \int  \frac{\int_{\theta \tp x / \sigma = z}  \mathbf{1}_{x \in S} \hmu_{T_2+t}(x) dx}{\mo_t(z)} \mo_t(z)  dz \\
    &= \hmu_{T_2+t}(S).
  \end{align*}
\end{proof}

\begin{proof}[Proof of Lemma~\ref{lem:lipschitzness_of_G}]
  First, we look at the derivative of $\mo_{y, \thirdT}$ with respect to $y$. Recall that
  \begin{align*}
    \mo_{y, \thirdT} = \frac{\exp(-\frac{\thirdT \sigma^2}{2} \abss{x-y}^2) \nu(x) }{ \int \exp (-\frac{\thirdT \sigma^2}{2} \abss{z-y}^2) \nu(z) dz}.
  \end{align*}
  Taking derivative with respect to $y$, we obtain
  \begin{align*}
    \frac{\partial \mo_{y, \thirdT}(x)}{\partial y} &= - \thirdT\sigma^2 \brackets{ (y-x) - \frac{\int (y- z) \exp (-\frac{\thirdT \sigma^2}{2} \abss{z-y}^2) \mo(z) dz}{\int \exp (-\frac{\thirdT \sigma^2}{2} \abss{z-y}^2) \mo(z) dz} } \mo_{y, \thirdT} \\
    &= \thirdT\sigma^2 [x - \bb(\mo_{y, \thirdT})] \mo_{y, \thirdT}(x).
  \end{align*}
  Consequently, 
  \begin{align*}
    \frac{\partial \mo_{y, \thirdT}(h)}{\partial y} = \int h(x) \cdot \thirdT \sigma^2 [x - \bb(\mo_{y, \alpha})] \mo_{y, \alpha}(x) dx.  
  \end{align*}
  Second, let $v$ be the unit vector in the direction of $y - \tilde{y}$. Then $y - \tilde{y} = \abss{y  - \tilde{y}} v$. Applying the mean value theorem, there exists $\hat{y}$ such that 
  \begin{align*}
    \abss{\mo_{y, \thirdT}(h) - \mo_{\tilde y, \thirdT}(h)} &= \abss{(y - \tilde{y})\tp \frac{\partial \mo_{z, \thirdT}(h)}{\partial z} |_{z=\hat y} } \\
    &= \thirdT\sigma^2 \abss{y  - \tilde{y}} \abss{\int h(x) v \cdot (x - \bb(\mo_{\hat{y}, \thirdT})) \mo_{\hat{y}, \thirdT}(x) dx} \\
    &\leq \thirdT\sigma^2 \abss{y - \tilde{y}} \brackets{ \int \parenth{x - \bb(\mo_{\hat{y}, \thirdT}) }^2 \mo_{\hat{y}, \thirdT}(x) dx   }^{1/2} \brackets{ \int h(x)^2  \mo_{\hat{y}, \thirdT}(x) dx}^{1/2} \\
    &\leq \thirdT\sigma^2 \abss{y - \tilde{y}} \Var(\mo_{\hat{y}, \thirdT})^{1/2} \cdot 1 \\
    &\leq \thirdT\sigma^2 \abss{y - \tilde{y}} \frac{1}{(\thirdT \sigma^2)^{1/2}} \\
    &= \parenth{\thirdT\sigma^2}^{1/2}  \abss{y - \tilde{y}}.
  \end{align*}
  The last inequality follows because $\mo_{y, \thirdT}$ is $\thirdT\sigma^2$-strongly logconcave for any $y$. This completes the proof.
\end{proof}

Before we can prove Lemma~\ref{lem:case2_nu_t_y_Kr_large}, we need the following fact regarding on two logconcave densities. 
\begin{lemma}
  \label{lem:interval_overlap_two_close_logconcave}
  Let $p, \tilde{p}$ be two logconcave densities on $\real$ such that $p$ is isotropic and $\tvdis(p, \tilde{p}) < \epsilon$. Let $0< \delta < 1$. Set $a$ and $b$ to be the $\delta$ and $(1-\delta)$-quantiles respectively of $p$, respectively. Suppose further that $\epsilon \leq \delta^2 / 10^5$. Then for all $z \in [a, b]$, we have $\frac{\tilde{p}}{p}(z) > 0.9$.
\end{lemma}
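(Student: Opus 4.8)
The plan is to argue by contradiction: supposing $\tilde p(z_0)\le 0.9\,p(z_0)$ for some $z_0\in[a,b]$, I would show $\tvdis(p,\tilde p)\ge \delta^2/10^5$, contradicting the hypothesis. The point is that a pointwise deficit of $\tilde p$ relative to $p$ at a single point propagates, by logconcavity (hence unimodality) of $\tilde p$, to an interval of length of order $\delta$ on one side of $z_0$; on that interval $p$ is still essentially as large as $p(z_0)$ because $p$ is nearly flat at scale $\delta$ throughout the bulk, so integrating the deficit produces TV mass $\gtrsim p(z_0)\cdot\delta\gtrsim\delta^2$.

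\emph{Ingredients about $p$.} I would first record three facts about the isotropic logconcave $p$, all standard one-dimensional estimates (Appendix~\ref{app:properties_of_logconcave}), under the harmless assumption that $\delta$ is below a universal constant: (a) $\|p\|_\infty\le 1$; (b) since $p$ is logconcave, $F(z)=\int_{-\infty}^z p$ and $\bar F=1-F$ are logconcave (Pr\'ekopa), and combining with (a) — $(\log F)'=p/F$ nonincreasing forces $(\log p)'\le p/F\le 1/\delta$ on $[a,b]$, and symmetrically $(\log p)'\ge -p/\bar F\ge -1/\delta$ — one gets $|(\log p)'|\le 1/\delta$ on $[a,b]$, hence $p(z)\ge p(z')e^{-|z-z'|/\delta}$ for $z,z'\in[a,b]$; (c) $p(z)\ge c_0\delta$ on $[a,b]$ for a universal $c_0>0$, obtained from concavity of $\log F$ (which gives $p(a)=\delta\,(\log F)'(a)\ge \tfrac{\delta}{b-a}\log\tfrac{1-\delta}{\delta}$), the symmetric bound for $p(b)$ via $\log\bar F$, unimodality (so $\min_{[a,b]}p=\min(p(a),p(b))$), and the subexponential tail bound $b-a\le C\log(1/\delta)$; and (d) $b-a\ge 1/2$, from $1-2\delta=\int_a^b p\le (b-a)\|p\|_\infty$.

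\emph{The main argument.} Fix $z_0\in[a,b]$, set $P_0:=p(z_0)$, and suppose $\tilde p(z_0)\le 0.9P_0$. Let $m^*$ be a mode of $\tilde p$; reflecting $x\mapsto -x$ if necessary (this preserves isotropy and swaps the two quantiles), assume $z_0\le m^*$, so $\tilde p$ is nondecreasing on $(-\infty,z_0]$ and $\tilde p\le 0.9P_0$ there. Put $h:=\delta/100$. If $z_0\ge a+h$: on $J:=[z_0-h,z_0]\subseteq[a,b]$ we have $\tilde p\le 0.9P_0$ and, by (b), $p\ge P_0 e^{-h/\delta}\ge 0.99P_0$; thus $p-\tilde p\ge 0.09P_0$ on $J$ and $\tvdis\ge\int_J(p-\tilde p)\ge 0.09\,P_0 h\ge 9\cdot 10^{-4}c_0\delta^2$. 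If $z_0<a+h$: take $J:=[z_0,z_0+h]$, which lies in $[a,b]$ by (d); here $\tilde p$ may increase across $J$, so I bound $\nu:=(\log\tilde p)'(z_0)\ge 0$: logconcavity gives $\tilde p(z)\le\tilde p(z_0)e^{-\nu(z_0-z)}$ for $z<z_0$, whence $\delta-\epsilon\le\tilde F(z_0)\le\tilde p(z_0)/\nu\le 0.9P_0/\nu$ (using $\tilde F(z_0)\ge F(z_0)-\tvdis\ge\delta-\epsilon$ since $z_0\ge a$), i.e. $\nu\le 0.91 P_0/\delta$ as $\epsilon\le\delta/10^5$; by concavity of $\log\tilde p$ this slope bound persists for $z\ge z_0$, so $\tilde p\le 0.9P_0 e^{0.91 h/\delta}$ on $J$ (using $P_0\le 1$), while $p\ge P_0 e^{-h/\delta}$ there, and since $h/\delta=10^{-2}$ one checks $p-\tilde p\ge P_0\big(e^{-10^{-2}}-0.9\,e^{0.91\cdot 10^{-2}}\big)\ge 0.08P_0$ on $J$, giving $\tvdis\ge 0.08\,P_0 h\ge 8\cdot 10^{-4}c_0\delta^2$. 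In either case $\tvdis\ge 8\cdot 10^{-4}c_0\delta^2>\delta^2/10^5\ge\epsilon$ as soon as $c_0\ge 1/80$ (which the usual constant comfortably satisfies), contradicting $\tvdis<\epsilon$; if the honest $c_0$ were smaller, shrinking $h$ absorbs the loss, the factor $10^5$ leaving ample room.

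\emph{Main obstacle.} I expect fact (c) — the linear-in-$\delta$ lower bound $p(z)\ge c_0\delta$ on $[a,b]$ with an explicit constant — to be the delicate point: a crude variance computation only yields $p(z)\gtrsim\delta^{3/2}$, and one genuinely needs the concavity of $\log F$ (and $\log\bar F$) together with isotropy, which enters through $b-a\le C\log(1/\delta)$; this is the sole use of isotropy of $p$. A secondary nuisance is the endpoint regime $z_0\in[a,a+h)$, where the deficit interval must be placed to the right of $z_0$, the free monotonicity bound on $\tilde p$ is no longer available, and one must instead control $(\log\tilde p)'(z_0)$ via the lower bound $\tilde F(z_0)\gtrsim\delta$.
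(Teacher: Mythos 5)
Your proof is correct, and it takes a genuinely different route from the paper's. Both arguments argue by contradiction starting from a point $z_0\in[a,b]$ with $\tilde p(z_0)\le 0.9\,p(z_0)$, and both ultimately invoke the same two facts about an isotropic logconcave $p$ (the lower bound $p\ge c_0\delta$ on $[a,b]$ and $\|p\|_\infty\le 1$), but the mechanism for producing the TV mass is different. The paper works symmetrically: it defines the $\delta/2$- and $(1-\delta/2)$-quantiles $\tilde a,\tilde b$, locates the first points $w_-\le z_0\le w_+$ on either side where the ratio $f=\tilde p/p$ recovers to $0.95$, uses the small TV to force $|w_+-w_-|\lesssim\epsilon/\delta$, applies the mean value theorem to $\log f$ on $[w_-,z_0]$ and $[z_0,w_+]$ to get large slopes of opposite sign, and then uses concavity of $\log\tilde p$ to transfer the resulting large second-difference onto $\log p$, contradicting the fact that $p$ cannot decay fast on either side given that both tails carry mass $\ge\delta/2$. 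Your argument instead is one-sided: after a WLOG reflection it exploits unimodality of $\tilde p$ directly so that $\tilde p\le 0.9\,p(z_0)$ on an entire half-line to the left of $z_0$, and replaces the paper's implicit flatness control with the explicit Lipschitz bound $|(\log p)'|\le 1/\delta$ on $[a,b]$ (cleanly derived from logconcavity of $F$ and $\bar F$ via Pr\'ekopa), which gives $p\ge 0.99\,p(z_0)$ on an interval of length $\delta/100$; integrating the gap gives $\tvdis\gtrsim p(z_0)\delta\gtrsim\delta^2$. Your boundary regime $z_0<a+\delta/100$ correctly handles the loss of free monotonicity by bounding $(\log\tilde p)'(z_0)$ from above through $\tilde F(z_0)\ge\delta-\epsilon$. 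The net effect is that your proof avoids the nested definitions of $w_\pm,u_\pm$ and makes the geometry of the contradiction more transparent (a local dip of $\tilde p$ at $z_0$ produces TV mass over an interval of length $\asymp\delta$ because $p$ is nearly flat at that scale), at the cost of needing the explicit derivative bound on $\log p$; the paper's MVT route packages all slope control into $\log f$ and never needs to make the Lipschitz constant of $\log p$ explicit. Both proofs need $\delta$ below a universal constant; since the lemma is only invoked with $\delta\lesssim\xi\le 1/2$, this is harmless in either case. One small remark: for fact (c) you can simply cite Lemma~\ref{lem:isotropic_1d_delta_quantile_large} with $c_0=1/(8e)$, which already satisfies $c_0>1/80$ and saves you the self-contained derivation via $\log F$-concavity and the tail bound.
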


\begin{proof}[Proof of Lemma~\ref{lem:interval_overlap_two_close_logconcave}]
  Without loss of generality (by convolving with small Gaussian and taking limit), we can assume that $\frac{\tilde{p}}{p}$ is well-defined on $\real$ and both densities are $C^2$-smooth. Let $\tilde{a}$ and $\tilde{b}$ be the $\delta/2$ and $(1-\delta/2)$-quantiles of $p$. Define $f(\cdot) \defn \frac{\tilde{p}}{p}(\cdot)$.

  Suppose there is $z_0 \in [a, b]$ such that $f(z_0) \leq 0.9$. Define $w_- \defn \sup \braces{z \in [\tilde{a}, z_0] \mid f(z) \geq 0.95 }$, noting that such $w_- \in [\tilde a, z_0]$ must indeed exist, for otherwise we always have $f(x) \leq 0.95$ for $x \in [\tilde{a}, a]$, which implies
  \begin{align*}
    \epsilon > \tvdis(\nu, \tilde{\nu}) \geq \int_a^{\tilde a} (1-f(x)) p(x) dx \geq \frac{\delta}{2} \cdot 0.05,
  \end{align*} 
  which contradicts the assumption $\epsilon \leq \delta^2 / 10^5$ and $\delta < 1$. The same reasoning allows us to define
  \begin{align*}
    w_+ \defn \inf \braces{z \in [z_0, \tilde{b}] \mid f(z) \geq 0.95}.
  \end{align*}
  By the definition of $w_-$ and $w_+$, we have $f(x) \leq 0.95$ for all $x \in [w_-, w_+]$, which implies that
  \begin{align*}
    p([w_-, w_+]) \cdot 0.05 \leq \int_{w_-}^{w_+} (1-f(x))p(x) dx \leq \tvdis(p, \tilde{p}) < \epsilon.
  \end{align*}
  Using Lemma~\ref{lem:isotropic_1d_delta_quantile_large} below, we deduce that $p(z) \geq \frac{1}{16e}\delta$ for $z \in [w_-, w_+]$ which implies that
  \begin{align}
    \abss{w_+ - w_- } \leq \frac{320 e \cdot \epsilon}{\delta}.
  \end{align}
  By the mean value theorem, there exists $u_{-} \in [w_-, z_0]$ such that 
  \begin{align*}
    \parenth{\log \tilde{p} - \log p}'(u_-) &= \frac{\log f (z_0) -  \log f (w_-)}{\abss{z_0 - w_-}} \\
    &\leq \frac{ - \log \frac{0.95}{0.9} }{\abss{z_0 - w_-}} \\
    &\leq \frac{ - \log \frac{0.95}{0.9} \cdot \delta }{320 e \cdot \epsilon} =: -\mathfrak{D}.
  \end{align*}
  Similarly, there exist $u_+$ between $z_0$ and $w_+$ such that
  \begin{align*}
    \parenth{\log \tilde{p} - \log p}'(u_+) \geq \mathfrak{D}.
  \end{align*}
  Combining the two bounds above, we have
  \begin{align*}
    \parenth{\log \tilde{p} - \log p}'(u_+) - \parenth{\log \tilde{p} - \log p}'(u_-) \geq 2 \mathfrak{D}.
  \end{align*}
  Since $\log \tilde{p}$ is concave, its derivative is non-decreasing, and consequently we have
  \begin{align*}
    \parenth{\log \tilde{p}}'(u_+) - \parenth{\log \tilde{p} }'(u_-) \leq 0.
  \end{align*}
  It follows that  
  \begin{align*}
     \parenth{\log p }'(u_-) - \parenth{\log p}'(u_+) \geq 2 \mathfrak{D}.
  \end{align*}
  Then either $\parenth{\log p }'(u_-)$ or $- \parenth{\log p}'(u_+)$ has to be larger than $\mathfrak{D}$. If $-\parenth{\log p}'(u_+)  \geq \mathfrak{D}$, because $\log(p)$ is non-increasing, for $z \geq u_+$, 
  \begin{align*}
    \parenth{\log p}'(x) \leq  - \mathfrak{D}.
  \end{align*}
  Integrating leads to the bound $p(z) \leq p(u_+) e^{- \mathfrak{D}(z - u_+)}$. Integrating again, we have 
  \begin{align*}
    \int_{u_+}^ \infty p(z) dz \leq p(u_+) \frac{1}{\mathfrak{D}} \leq \frac{1}{\mathfrak{D}} = \frac{320 e \cdot \epsilon}{\log \frac{0.95}{0.9} \cdot \delta} \leq \frac{20000 \epsilon}{\delta} \leq \frac{1}{5} \delta,
  \end{align*}
  where $p(u_+) \leq 1$ follows from Lemma~\ref{lem:isotropic_1d_max}. This leads to a contradiction to the assumption that the mass on the right of $u_+$ is at least $\frac{1}{2} \delta$. The other case $\parenth{\log p }'(u_-) \geq \mathfrak{D}$ is similar.
\end{proof}

\begin{proof}[Proof of Lemma~\ref{lem:case2_nu_t_y_Kr_large}]
  The proof is split into two disjoint cases: $\frac{1}{512} \leq \thirdT \sigma^2 < \frac{400 \log(10^4 /\xi)}{\xi^2}$ and $\thirdT \sigma^2 \geq \frac{400 \log(10^4 /\xi)}{\xi^2}$. 
  \paragraph{Case 1:} Assume that $\frac{1}{512} \leq \thirdT \sigma^2 < \frac{400 \log(10^4 /\xi)}{\xi^2}$.

    Let $\delta = \xi/20$. Let $a$ and $b$ be the $\delta$ and $(1-\delta)$-quantiles of $\rho$. In this case, we show that the interval $I=[a,b]$ satisfies the requirements of the lemma. In this case we trivially have $\rho(I) \geq 1-\xi/10$.

    Since $\rho$ is the convolution of $\mo$ which has variance $1$ and $\Normal\parenth{0, \frac{1}{\alpha \sigma^2}}$ which has variance $512$, the variance of $\rho$ is upper-bounded by $513$. 
  

   Applying the tail bound for isotropic log-concave density in Lemma~\ref{lem:isotropic_logconcave_tail}, we obtain 
  \begin{align*}
    \abss{a - b} \leq 2 \sqrt{513} \log \frac{e}{\delta} \leq 100 \parenth{4 + \log \frac{1}{\xi}}.
  \end{align*}

  It remains to show that $\omega_{y,\alpha}(h_{K_r}) \geq 7/8$ for all $y \in I$. Let $J \defn \braces{y \in [a, b] \mid \mo_{y, \thirdT}(h_{K_r}) \geq \frac{15}{16} }$. Note that $J$ is not necessarily convex. Since $\Exs_{Y\sim \rho}{\mo_{Y, \thirdT}(K_r)} = \mo_{0}(K_r) \geq 1- \zeta/10$,  we have by Markov's inequality that
  \begin{align}
    \label{eq:case2_p_Kr_large}
    \Prob_{Y \sim \rho} \left (\mo_{Y, \thirdT}(K_r) \geq \frac{15}{16} \right ) \geq 1 - 2 \zeta.
  \end{align}

  Fix any $z_0 \in [a, b] \setminus J$, define the largest interval around $z_0$ contained in $[a, b] \setminus J$ as follows
  \begin{align*}
    z_+ &= \sup \braces{\tilde{z} \mid [z_0, \tilde{z}] \subseteq [a, b] \setminus J } \\
    z_- &= \inf \braces{\tilde{z} \mid [\tilde{z}, z_0] \subseteq [a, b] \setminus J }.
  \end{align*}
  For any $\tilde{z} \in [z_-, z_+]$, we have $\mo_{\tilde{z}, \thirdT}(h_{K_r}) < \frac{15}{16}$ according to the definition of $J$.
  It follows from Eq.~\eqref{eq:case2_p_Kr_large} that $\rho([z_-, z_+]) \leq 2\zeta$. On the other hand, since the variance of $\rho$ is bounded by $513$, applying Lemma~\ref{lem:isotropic_1d_delta_quantile_large}, we obtain $\rho(z) \geq \frac{1}{8 \sqrt{513} e} \delta$ for $z \in [a, b]$. Hence,
  \begin{align*}
    \abss{z_+ - z_-} \leq \frac{2\zeta}{\frac{1}{8 \sqrt{513} e} \delta} \leq \frac{2000\zeta}{\delta}.
  \end{align*}

  Finally, we can use the Lipschitz property of $y \mapsto \mo_{y, \thirdT}(h_{K_r})$ to lower bound $\mo_{\tilde{z}, \thirdT}(h_{K_r})$ for $\tilde{z} \in [z_-, z_+]$. According to Lemma~\ref{lem:lipschitzness_of_G}, $y \mapsto \mo_{y, \thirdT}(h_{K_r})$ is $\sqrt{\alpha \sigma^2} \leq \sqrt{\frac{400 \log(10^4 /\xi)}{\xi^2} }$-Lipschitz. Note that the assumption that $\zeta \leq \xi^2/\sqrt{\log(10^4/\xi)}/10^8$ implies $\frac{2000 \zeta}{ \delta} \cdot \frac{\sqrt{400 \log(10^4/\xi)}}{\xi} \leq \frac{1}{16}$, which in turn ensures that
  \begin{align*}
    \mo_{\tilde{z}, \thirdT}(h_{K_r}) \geq 7/8, \text{ for } \tilde{z} \in [z_-, z_+].
  \end{align*}
  Hence, for any $y \in [a, b]$, we have $\mo_{y, \thirdT}(h_{K_r}) \geq 7/8$. This concludes the case $\frac{1}{512} \leq \thirdT \sigma^2 \leq \frac{400 \log(10^4 /\xi)}{\xi^2}$ with the choice of the interval $I = [a, b]$. 

  \paragraph{Case 2:} Assume that $ \thirdT \sigma^2 \geq \frac{400 \log(10^4 /\xi)}{\xi^2}$. \\
  Take $\delta = \xi/ 100$. Let $a$ and $b$ be the $\delta$ and $(1-\delta)$-quantiles of $\mo$, respectively.
  Let $I \defn [a + \delta, b - \delta]$. Using the same reasoning as in the beginning of the previous case, we know that its length satisfies $\abss{I} \leq \abss{a - b} \leq 100 \parenth{4 + \log \frac1 \xi}$.

  Next, let us show that $\rho(I) \geq 1-\xi / 10$. To that end, let $X \sim \mo$ and $Z \sim \Normal(0, \frac{1}{\thirdT \sigma^2} )$ be independent of each other. Then 
  \begin{align*}
    \rho([a+ \delta, b- \delta]) &= \Prob(X + Z \in [a+ \delta, b- \delta]) \\
    &\geq \Prob(X \in [a + 2\delta, b - 2\delta] \text{ and } Z \in [-\delta, \delta]) \\
    &=  \Prob(X \in [a + 2\delta, b - 2\delta]) \cdot \Prob(Z \in [-\delta, \delta]) \\
    &\overset{(v)}{\geq} (\mo([a, b]) - 4\delta \cdot 1) \cdot \parenth{1 - \delta/500}\\
    &\geq 1- 7\delta \geq 1 - \xi/10 \\
  \end{align*}
  The step (v) follows because $\mo([a, b]) = 1-2\delta$, $\mo[a + 2\delta, b - 2\delta] \geq \mo([a,b]) - 4\delta \cdot 1$ using Lemma~\ref{lem:isotropic_1d_max} and by the Gaussian tail bound in Eq.~\eqref{eq:standard_Gaussian_tail_bound}. 

  To conclude the proof it remains to show that for $y \in I$ one has $\omega_{y,\alpha} (h_{K_r}) \geq 7/8$, where
  \begin{align*}
    \mo_{y, \thirdT}(h_{K_r}) &= \frac{\int \mo(z) \exp(-\frac{\thirdT\sigma^2}{2} \abss{z-y}^2) h_{K_r}(z) dz }{\int \mo(z) \exp(-\frac{\thirdT\sigma^2}{2} \abss{z-y}^2)dz} \\
    & = \hmu_{T_2}(K_r) \frac{\int \tilde \mo(z) \exp(-\frac{\thirdT\sigma^2}{2} \abss{z-y}^2)  dz }{\int \mo(z) \exp(-\frac{\thirdT\sigma^2}{2} \abss{z-y}^2)dz},
  \end{align*}
  and where $\tilde{\mo}(x) = \frac{\mo(x) h_{K_r}(x)}{\hat \mu_{T_2}(K_r)}$. 
  
  Observe that $\tilde{\mo}$ is, by definition, the push-forward of the measure $\tilde{\mu}_{T_2} \defn \hmu_{T_2} \cdot \mathbf{1}_{K_r}/ \hmu_{T_2}(K_r)$ via $x \mapsto \frac{1}{\sigma} \cdot x\tp \theta$. Since by construction $\tvdis(\hmu_{T_2}, \tilde{\mu}_{T_2}) \leq 1- \hmu_{T_2}(K_r) \leq \zeta/10$, we obtain
  \begin{align*}
    \tvdis(\mo, \tilde{\mo}) \leq \tvdis(\hmu_{T_2}, \tilde{\mu}_{T_2}) \leq \zeta/10.
  \end{align*}
  By the Pr\'ekopa-Leindler inequality, we have that $\tilde \omega$ is logconcave, thus we may apply Lemma~\ref{lem:interval_overlap_two_close_logconcave} with $\epsilon = \zeta/10 \leq \delta^2/10^5$, we deduce that 
  \begin{equation} \label{eq:wtildew}
  \frac{\tilde{\mo}}{\mo}(z) \geq 0.9, ~~ \forall z \in [a,b].
  \end{equation}
  
  Recall that the standard Gaussian tail bound for $Z \sim \Normal(0, \frac{1}{\thirdT \sigma^2})$ implies for $\eta > 0$,
  \begin{align}
    \label{eq:standard_Gaussian_tail_bound}
    \Prob \left ( \abss{Z} \geq \frac{\eta}{\sqrt{\thirdT \sigma^2}} \right ) \leq 2 e^{-\eta^2/2}.
  \end{align}
  Take $\eta = 2 \log^{1/2}(\frac{1000}{\delta})$, then $e^{-\eta^2/2} \leq \delta / 1000$. 
  Note that the assumption $\thirdT \sigma^2 \geq \frac{400 \log(10^4 /\xi)}{\xi^2}$ ensures that $\frac{\eta}{\sqrt{\thirdT \sigma^2}} \leq \delta$. We have
  \begin{align*}
    M_1 &\defn \int_{[y - \delta, y+\delta]} \mo(z) \frac{1}{\sqrt{2\pi \frac{1}{\thirdT \sigma^2} }}\exp(-\frac{\thirdT \sigma^2}{2} \abss{z-y}^2) dz \\
    &\overset{(i)}{\geq} \frac{1}{8e}\delta  \int_{[y - \delta, y+\delta]} \mo(z) \frac{1}{\sqrt{2\pi \frac{1}{\thirdT \sigma^2} }}\exp(-\frac{\thirdT \sigma^2}{2} \abss{z-y}^2) dz \\
    & \geq \frac{1}{8e}\delta (1 - 2 e^{-\eta^2/2}) \geq \delta/50 ,
  \end{align*}
  where (i) follows from Lemma~\ref{lem:isotropic_1d_delta_quantile_large}.

  \begin{align}
    M_2 &\defn \int_{[y - \delta, y+\delta]^c} \mo(z) \frac{1}{\sqrt{2\pi \frac{1}{\thirdT \sigma^2} }}\exp(-\frac{\thirdT \sigma^2}{2} \abss{z-y}^2) dz \\
    &\overset{(ii)}{\leq}  \int_{[y - \delta, y+\delta]^c } \frac{1}{\sqrt{2\pi \frac{1}{\thirdT \sigma^2} }}\exp(-\frac{\thirdT \sigma^2}{2} \abss{z-y}^2) dz \\
    &\leq 2 e^{-\eta^2/2} \leq \delta/500,
  \end{align}
  where (ii) follows from Lemma~\ref{lem:isotropic_1d_max}. The two above displays imply that
  \begin{equation} \label{eq:wconc}
  \int \mo(z) \exp\left (-\frac{\thirdT\sigma^2}{2} \abss{z-y}^2 \right ) dz \leq \frac{10}{9} \int_{[y - \delta, y+\delta]} \mo(z) \exp\left (-\frac{\thirdT\sigma^2}{2} \abss{z-y}^2 \right )dz.
  \end{equation}
  Hence, for $y\in [a + \delta, b - \delta]$, we have
    \begin{align*}
    \mo_{y, \thirdT}(h_{K_r}) 
    &= \hmu_{T_2}(K_r) \frac{\int \tilde \mo(z) \exp(-\frac{\thirdT\sigma^2}{2} \abss{z-y}^2)  dz }{\int \mo(z) \exp(-\frac{\thirdT\sigma^2}{2} \abss{z-y}^2)dz} \\
    & \stackrel{ \eqref{eq:wconc} }{\geq} 0.9 \hmu_{T_2}(K_r) \frac{\int_{[y - \delta, y + \delta]} \tilde \mo(z) \exp(-\frac{\thirdT\sigma^2}{2} \abss{z-y}^2)  dz }{\int_{[y - \delta, y + \delta]} \mo(z) \exp(-\frac{\thirdT\sigma^2}{2} \abss{z-y}^2)dz} \\
    & \stackrel{ \eqref{eq:wtildew}}{\geq} 0.9^2 \hmu_{T_2}(K_r) \geq \frac{7}{8}.
  \end{align*}
  This completes the proof.
\end{proof}

\subsection{A uniqueness result for the distribution attained by stochastic localization}
\label{sub:identify_3_stage_SL}
In this subsection, we prove Lemma~\ref{lem:identification_of_marginal_on_Y}, which follows from the inversion of the multidimensional Mellin transform~\cite{antipova2007inversion} (also related to the inverse Laplace transform). To see how the problem is related to the inversion of the multidimensional Mellin transform, we first provide some background. 

The Mellin transform~\cite{mellin1896fundamentale} (see also~\cite{antipova2007inversion}) of a function $\Phi(x)$ defined in the positive orthant $\real^n_+$ is given by the integral
\begin{align*}
  \Mellin[\Phi](z) = \int_{\real^n_+} \Phi(x) x^{z-1} dx. 
\end{align*}

Suppose we know that for $t > 0$,
\begin{align*}
  \mu(x) = \int_{\real^n} \nu_{y, t}(x) p(y) dy, \forall x \in \real^n,
\end{align*}
where $\nu_{y, t}(x) = \frac{\exp\parenth{-\frac{t}{2} \abss{x-y}^2} \mu(x)}{\int \exp\parenth{-\frac{t}{2} \abss{z-y}^2} \mu(z) dz }$. Suppose $\tilde{p}$ also satisfy 
\begin{align*}
  \mu(x) = \int_{\real^n} \nu_{y, t}(x) \tilde{p}(y) dy, \forall x \in \real^n.
\end{align*}
Taking the difference between the two and rearranging the terms that depending on $y$ and $x$ in the integral separately, we obtain 
\begin{align*}
  0 = \mu(x) \exp \parenth{-\frac{t}{2} \abss{x}^2} \int_{\real^n } \frac{ (p(y) - \tilde{p}(y))  \exp \parenth{-\frac{t}{2} \abss{y}^2 } }{\int \exp\parenth{-\frac{t}{2} \abss{z-y}^2} \mu(z) dz  } \cdot \exp (t x \cdot y) dy.
\end{align*}
Using the change of variable $w = \exp(ty) \in \real^n_+$, we obtain
\begin{align*}
  0 = \mu(x) \exp \parenth{-\frac{t}{2} \abss{x}^2} \int_{\real^n_+ } \frac{ \brackets{p\parenth{\frac{1}{t} \log w } - \tilde{p}\parenth{\frac{1}{t} \log w} }\exp \parenth{-\frac{1}{2t} \abss{\log w}^2 } }{\int \exp\parenth{-\frac{t}{2} \abss{z- \frac{1}{t} \log w}^2} \mu(z) dz  } \cdot \frac{1}{t} w^{x-1}  dw.
\end{align*}
Since $\mu(x) \neq 0$ for $x \in K$, we have that for $x \in K$,
\begin{align}
  \label{eq:identify_marginal_transformed}
  0 = \int_{\real^n_+ } \frac{\brackets{p\parenth{\frac{1}{t} \log w } - \tilde{p}\parenth{\frac{1}{t} \log w} } \exp \parenth{-\frac{1}{2t} \abss{\log w}^2 } }{\int \exp\parenth{-\frac{t}{2} \abss{z- \frac{1}{t} \log w}^2} \mu(z) dz  } \cdot w^{x-1}  dw.
\end{align}

It is now clear that $x \mapsto  0$ is the Mellin transform of $w \mapsto \frac{\brackets{p\parenth{\frac{1}{t} \log w } - \tilde{p}\parenth{\frac{1}{t} \log w} } \exp \parenth{-\frac{1}{2t} \abss{\log w}^2 } }{\int \exp\parenth{-\frac{t}{2} \abss{z- \frac{1}{t} \log w}^2} \mu(z) dz  }$. Since $w \mapsto \frac{1}{t} \log w$ is a one-to-one mapping from $\real^n_+$ to $\real^n$, in order to show that $p$ is uniquely defined, it is sufficient to show that the inversion of the multidimensional Mellin transform is well-defined. 

\begin{proof}[Proof of Lemma~\ref{lem:identification_of_marginal_on_Y}] The main strategy is to verify the conditions and then to apply Theorem 2 in~\cite{antipova2007inversion} on the inversion of the Mellin transform for the functions appeared in Eq.~\eqref{eq:identify_marginal_transformed}. In the notation of~\cite{antipova2007inversion}, take the convex set $U \defn K \subset \real^n$ and the convex set $\Theta \defn \ball(0, 2\pi)$, and set
  \begin{align*}
    \mathcal{F}(\cdot) \defn 0
  \end{align*}
  on $ U + i\real^n$. $\mathcal{F}$ is holomorphic on $U + i\real^n$ and bounded. Applying Theorem 2 in~\cite{antipova2007inversion}, its inverse Mellin transform is well-defined, and it is $0$ almost everywhere. We conclude that the function 
  \begin{align*}
    w \mapsto \frac{\brackets{p\parenth{\frac{1}{t} \log w } - \tilde{p}\parenth{\frac{1}{t} \log w} } \exp \parenth{-\frac{1}{2t} \abss{\log w}^2 } }{\int \exp\parenth{-\frac{t}{2} \abss{z- \frac{1}{t} \log w}^2} \mu(z) dz}
  \end{align*}
  is $0$ almost everywhere, which implies $p = \tilde{p}$ almost everywhere.  
\end{proof}

\section{Conductance for the transformed density}
\label{sec:conductance_for_transformed}
In this section we first prove Lemma~\ref{lem:distance_pt_to_center} and then prove Lemma~\ref{lem:conductance_transformed_density}.

\subsection{Distance of a typical point from \texorpdfstring{$\mu_t$}{mu} to its center}
\label{sub:distance_pt_to_center}
The main idea to prove Lemma~\ref{lem:distance_pt_to_center} is to first use the observation in~\cite{el2022information} on the distribution of $c_n$ and then apply the standard Gaussian concentration. 
\begin{proof}[Proof of Lemma~\ref{lem:distance_pt_to_center}]
   Recall that $\mu_n$ is uniquely defined given $c_n$. The first part of the lemma is a direct application of Theorem 2 in~\cite{el2022information}. Additionally, given the data generation $X \sim \mu, Z \sim \Normal(0, \frac{1}{n} \Ind_n)$ independent, and $c_n/n \sim X + Z$, a point drawn from $\mu_t$ has the same law as the conditional distribution
  \begin{align*}
    X \mid c_n. 
  \end{align*}
  We are interested in the conditional distribution $X - \frac{c_n}{n} \mid c_n$. Applying the standard chi-square tail bound (see Lemma 1 in~\cite{laurent2000adaptive}), we obtain the unconditional bound 
  \begin{align*}
    \Prob(\abss{Z}^2 \geq 2 ) \leq e^{-\frac{n}{16} } \notag \\
    \Prob(\abss{Z}^2 \leq \frac{1}{2} ) \leq e^{-\frac{n}{16} }.
  \end{align*}
  Hence,
  \begin{align}
    \label{eq:chi-square_tail_bound}
    \Prob(\abss{Z} \geq \sqrt{2} \text{ or } \abss{Z} \leq \frac{\sqrt{2}}{2})  \leq 2 e^{-\frac{n}{16} }.
  \end{align}
  Let $\mathfrak{E}$ denote the event $\mathfrak{E} \defn \braces{a \in \real^n \mid \Prob( \abss{Z} > \sqrt{2} \text{ or }  \abss{Z} \leq \frac{\sqrt2}{2}\mid c_n = a) > e^{-\frac{n}{32}}}$. Writing out Eq.~\eqref{eq:chi-square_tail_bound} with conditional probability, we obtain
  \begin{align*}
    2 e^{-\frac{n}{16}} \geq \Prob(\abss{Z} \geq \sqrt{2} \text{ or } \abss{Z} \leq \frac{\sqrt{2}}{2}) \geq \Prob( \abss{Z} \geq \sqrt{2} \text{ or } \abss{Z} \leq \frac{\sqrt{2}}{2} \mid c_n \in \mathfrak{E} ) \cdot \Prob(\mathfrak{E}) > e^{-\frac{n}{32}} \Prob(\mathfrak{E}).
  \end{align*}
  Hence, $\Prob(\mathfrak{E}) < 2e^{-\frac{n}{32}}$.
\end{proof}

\subsection{Overlap bound: Proof of Lemma \ref{lem:conductance_transformed_density}}
To lower bound the $s$-conductance in Lemma~\ref{lem:conductance_transformed_density}, we first need to bound the transition overlap for close points in $K_r$ defined in Eq.~\eqref{eq:def_Kr}. This concept of transition overlap was previously proposed in~\cite{lovasz1999hit}. In order to introduce the transition overlap, we first define the notion of $1/8$-quantile hit-and-run step-size.
\begin{definition}[$1/8$-quantile hit-and-run step-size]
  \label{def:median_hitandrun_stepsize}
  Given a target density $\nu$ supported on a convex set $K$. For $x \in K$, define $F_x(\nu)$, the median step-size of the hit-and-run with target density $\nu$, as the step-size such that
  \begin{align}
    \label{eq:median_step_size}
    \Prob_{Y \sim P_{x \to \cdot}(\nu)}\parenth{\abss{Y - x} \leq F_x(\nu)} = \frac{1}{8}.
  \end{align}
\end{definition}
The hit-and-run transition kernel $y \mapsto P_{x\to y}$ is continuous on $K$ for any $x \in K$, so the above quantity is well-defined. 

\begin{lemma}
  \label{lem:transition_overlap}
  Fix $\beta\in \real^n$ and the target density $\nu_{\beta, n}$. Let $u, v \in K_r$ such that $\abss{u-\beta} \in \brackets{\frac{1}{\sqrt{2}}, \sqrt{2}}$. Suppose that
  \begin{align*}
    \abss{u - v} < \frac{2}{\sqrt{n}} \min\braces{F_u(\nu_{\beta, n}), 1},
  \end{align*}
  then there exists a universal constant $C \in (0, 1)$ such that 
  \begin{align*}
    \tvdis\parenth{P_{u\to \cdot } (\nu_{\beta, n}), P_{v \to \cdot}(\nu_{\beta, n}) } < 1 - C \min\braces{\sqrt{n} F_u(\nu_{\beta, n}), 1}.
  \end{align*}
\end{lemma}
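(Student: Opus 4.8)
The plan is to follow the transition‑overlap strategy of~\cite{lovasz1999hit}, now carried out for the truncated Gaussian $\nu_{\beta, n}$. Write $p_u$ and $p_v$ for the Lebesgue densities of $P_{u \to \cdot}(\nu_{\beta, n})$ and $P_{v \to \cdot}(\nu_{\beta, n})$. Formula~\eqref{eq:hit-and-run_transition} gives
\[
  \frac{p_v(x)}{p_u(x)} = \parenth{\frac{\abss{u-x}}{\abss{v-x}}}^{n-1}\cdot\frac{\nu_{\beta,n}(\ell_{ux})}{\nu_{\beta,n}(\ell_{vx})}.
\]
Since $1 - \tvdis\parenth{P_{u\to\cdot}(\nu_{\beta,n}), P_{v\to\cdot}(\nu_{\beta,n})} = \int p_u\wedge p_v\,dx$ and $p_u\wedge p_v \ge \tfrac12 p_u$ on $\{p_v \ge \tfrac12 p_u\}$, it is enough to produce a set $G \subseteq \real^n$ on which $p_v \ge \tfrac12 p_u$ and for which $P_{u\to G}(\nu_{\beta,n}) \gtrsim \min\braces{\sqrt n\, F_u(\nu_{\beta,n}), 1}$; the lemma then follows with a suitable universal constant $C$.

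Exploiting the Gaussian shape of $\nu_{\beta,n}$, a direct computation rewrites the density in the form
\[
  p_u(x) = \frac{2}{n\pi_n}\cdot\frac{e^{-\frac n2 b_u^2}}{d_u^{\,n-1} J_u},\qquad d_u := \abss{u-x},\quad \theta_u := \frac{x-u}{\abss{x-u}},\quad b_u := \angles{\theta_u, x-\beta},
\]
where $J_u := \int_{I_u} e^{-\frac n2 s^2}\,ds$ and $I_u$ is the sub‑segment of $\ell_{ux}\cap K$ in the coordinate along $\ell_{ux}$ centred at the foot of the perpendicular from $\beta$. Thus
\[
  \frac{p_v(x)}{p_u(x)} = \parenth{\frac{d_u}{d_v}}^{n-1}\cdot e^{-\frac n2(b_v^2 - b_u^2)}\cdot\frac{J_u}{J_v},
\]
and the job reduces to showing that all three factors stay in a fixed bounded range on $G$.

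Parametrize one step from $u$ as $X = u + D\Theta$ and set $e := (v-u)/\abss{u-v}$. I would take $G$ to be the event that (i) $D \ge F_u(\nu_{\beta,n})$; (ii) $\abss{\angles{\Theta, e}} \le c_0/\sqrt n$; (iii) $\abss{\angles{\Theta, X-\beta}} \le c_0/\sqrt n$; and (iv) along the direction $\Theta$ the chord $\ell_{u,\Theta}\cap K$ extends at least $\gtrsim 1/\sqrt n$ past $X$ on both sides. The crucial cancellation is in the first factor: the second‑order expansion
\[
  \abss{v-x} - \abss{u-x} = -\abss{u-v}\angles{\theta_u, e} + \frac{\abss{u-v}^2}{2 d_u}\parenth{1 - \angles{\theta_u,e}^2} + O\!\parenth{\frac{\abss{u-v}^3}{d_u^2}},
\]
combined with the hypothesis $\abss{u-v} < \tfrac{2}{\sqrt n}\min\braces{F_u,1} \le \tfrac{2}{\sqrt n} d_u$ and (ii), forces $\abss{d_v - d_u} = O(d_u/n)$ — far below the a priori bound $\abss{u-v}$ — so that $(d_u/d_v)^{n-1} = e^{O(1)}$. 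In the same way (ii)--(iii) give $\abss{b_v - b_u} = O(1/\sqrt n)$ and, since $\abss{b_u},\abss{b_v} = O(1/\sqrt n)$, the Gaussian‑height factor obeys $\tfrac n2\abss{b_v^2 - b_u^2} = O(1)$. Finally (iii)--(iv), together with $u,v \in K_r$ (so most chords are long at scale $1/\sqrt n$) and the fact that the chords $\ell_{ux}$ and $\ell_{vx}$ make an angle $\lesssim \abss{u-v}/d_u \le 2/\sqrt n$, show that $J_u$ and $J_v$ are both comparable to $1/\sqrt n$, hence $J_u/J_v = e^{O(1)}$. Taking $c_0$ small makes the product of the three factors exceed $1/2$ on $G$.

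It remains to bound $P_{u\to G}(\nu_{\beta,n})$ from below. Event (i) has probability $7/8$ by the very definition of $F_u$. The jump direction $\Theta$ has density $\propto \nu_{\beta,n}(\ell_{u,\theta})$ on the sphere, which — because $u \in K_r$ and $\abss{u-\beta}\in[\tfrac{1}{\sqrt2},\sqrt2]$, so the Gaussian bulk of a chord through $u$ sits inside $K$ for all but a controlled fraction of directions — is within an $O(1)$ factor of uniform off a small exceptional set; hence (ii) holds with constant probability, and (iii) fails with probability at most $e^{-cn}$ by Gaussian concentration of $\nu_{\beta,n}$. The measure of directions violating (iv) is where the factor $\min\braces{\sqrt n F_u, 1}$ enters, through a quantitative accounting that uses the quantile definition of $F_u$ and yields $P_{u\to G}(\nu_{\beta,n}) \gtrsim \min\braces{\sqrt n\, F_u(\nu_{\beta,n}), 1}$. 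I expect the main obstacle to be precisely this last accounting — keeping all of (i)--(iv) in force simultaneously on a set $G$ with $P_{u\to G}(\nu_{\beta,n})$ as large as $\min\{\sqrt n F_u, 1\}$, and in particular keeping the truncation ratio $J_u/J_v$ bounded while the chord endpoints may move by as much as $\sim\abss{u-v}$; this is where the deep‑inside property of $K_r$ and the precise step‑size quantile $F_u$ do the work, and closing the constants is the delicate part.
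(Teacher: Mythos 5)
Your setup — comparing the densities of $P_{u\to\cdot}(\nu_{\beta,n})$ and $P_{v\to\cdot}(\nu_{\beta,n})$ on a good set obtained by discarding steps that are too short, too aligned with $u-v$, or too aligned with $x-\beta$ — is exactly the paper's decomposition (your (i)--(iii) are its $A_1,A_2,A_3$), and your treatment of the factors $(d_u/d_v)^{n-1}$ and $e^{-\frac n2(b_v^2-b_u^2)}$ matches its $|x-v|$-vs-$|x-u|$ and $Q_1$ bounds. The gap is in how you place the factor $\min\{\sqrt n F_u,1\}$. You insist on a pointwise ratio $p_v/p_u\ge\tfrac12$, which forces you to add condition (iv) so that the chord integral $J_u$ is $\gtrsim 1/\sqrt n$, and you then need $P_{u\to G}\gtrsim\min\{\sqrt n F_u,1\}$. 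That last claim is not true in general. The lemma must hold for arbitrary $\beta$ and arbitrary $u\in K_r$ with $r$ as small as $\Theta(\zeta/\sqrt n)$, so $F_u$ can be of order $r\ll 1/\sqrt n$; if $K$ is locally thin at $u$ (a spike or wedge of width $\Theta(r)$ consistent with $\lambda(u,2r)\ge 63/64$), then the set of directions along which the chord through $u$ extends $\gtrsim 1/\sqrt n$ on both sides of $X$ is a spherical cap of angular radius bounded away from $\pi/2$, hence of measure $e^{-cn}$ — exponentially smaller than $\sqrt n F_u\sim\sqrt n r$, which is only polynomially small. So condition (iv) destroys the bound precisely in the regime where the $\min\{\sqrt n F_u,1\}$ factor matters, and you correctly flag this step as the one you cannot close.

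The fix, which is what the paper does, is to drop (iv) entirely and let the loss live in the density ratio rather than in the measure of the good set: keep $P_{u\to S}\ge\tfrac14$ using only (i)--(iii), and bound the chord-integral ratio one-sidedly by $J_v\le\sqrt{2\pi/n}$ (trivially) and $J_u\ge c\,F_u$ when $F_u\le 2/\sqrt n$ (the segment $[u,x]\subseteq K$ has length $\ge F_u$ by (i), and sits within distance $O(1/\sqrt n)$ of the foot of the perpendicular from $\beta$ by (iii), so the Gaussian weight on it is bounded below). This yields $p_v/p_u\ge c\min\{\sqrt n F_u,1\}$ on a set of constant $P_{u\to\cdot}$-measure, giving the claimed overlap; no lower bound on $J_u$ at scale $1/\sqrt n$, and no comparison of $J_v$ to $J_u$ beyond these two one-sided estimates, is needed.
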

To prove the transition overlap bound in Lemma~\ref{lem:transition_overlap}, we need to obtain a rough estimate of $F_u$.
\begin{lemma}
  \label{lem:F_u_lower_bound}
  Let $r > 0$. For any $\beta \in \real^n$ and $u \in K_r$ such that $\abss{u-\beta} \leq \sqrt{2} $, we have 
  \begin{align*}
    F_u(\nu_{\beta, n}) \geq \frac{1}{128} \min\braces{2r, \frac{1}{8\sqrt{n}}}.
  \end{align*}
\end{lemma}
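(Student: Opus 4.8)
The plan is to prove this by showing directly that a single hit-and-run step from $u$ moves a distance exceeding $\rho \defn \frac{1}{128}\min\{2r,\tfrac{1}{8\sqrt n}\}$ with probability at least $7/8$; since the distribution function $t\mapsto\Prob_{Y\sim P_{u\to\cdot}(\nu_{\beta,n})}(\abss{Y-u}\le t)$ is continuous (as used right after Definition~\ref{def:median_hitandrun_stepsize}), the bound $\Prob(\abss{Y-u}\le\rho)\le \tfrac18$ forces $F_u(\nu_{\beta,n})\ge\rho$, which is the claim. First I would rewrite the kernel~\eqref{eq:hit-and-run_transition} in polar coordinates around $u$: sampling a step amounts to drawing $\theta$ uniformly on $\Sph$ and then drawing the signed displacement $s\in\real$ along the line $\ell_\theta=\{u+s\theta\}$ from the density proportional to $\nu_{\beta,n}(u+s\theta)$. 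Expanding $\abss{u+s\theta-\beta}^2=(s+\angles{\theta,u-\beta})^2+\bigparenth{\abss{u-\beta}^2-\angles{\theta,u-\beta}^2}$, this conditional density equals, up to normalization, $e^{-\frac n2(s-s_0(\theta))^2}\mathbf{1}_{[a(\theta),b(\theta)]}(s)$, where $s_0(\theta)\defn-\angles{\theta,u-\beta}$ satisfies $\abss{s_0(\theta)}\le\abss{u-\beta}\le\sqrt2$ by Cauchy--Schwarz and $[a(\theta),b(\theta)]\ni 0$ is the chord $\{s\in\real:u+s\theta\in K\}$. Thus $\Prob(\abss{Y-u}\le\rho)=\Exs_\theta[q(\theta)]$, where $q(\theta)$ is the ratio of the integral of this truncated Gaussian over $[-\rho,\rho]$ to its integral over all of $[a(\theta),b(\theta)]$.

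The second step converts the hypothesis $u\in K_r$ into a lower bound on the chord length in most directions. Call $\theta$ \emph{bad} if $[-r,r]\not\subseteq[a(\theta),b(\theta)]$. Writing $\lambda(u,2r)\ge 63/64$ as $\vol(\ball(u,2r)\setminus K)\le\tfrac{1}{64}\vol(\ball(u,2r))$ and passing to polar coordinates yields $\int_{\Sph}\bigparenth{(2r)^n-\min\{b(\theta),2r\}^n}\,d\theta\le\tfrac{(2r)^n}{64}\abss{\Sph}$; on the set $\{b(\theta)<r\}$ the integrand is at least $(2r)^n-r^n\ge\tfrac12(2r)^n$, so that set has measure at most $\tfrac{1}{32}\abss{\Sph}$, and applying the same estimate to $-\theta$ bounds $\{a(\theta)>-r\}$. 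Hence the bad directions carry measure at most $\tfrac{1}{16}\abss{\Sph}$, on which I simply bound $q(\theta)\le1$.

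For a good direction the chord contains $[-r,r]$, so the denominator of $q(\theta)$ is at least $\int_{-r}^{r}e^{-\frac n2(s-s_0)^2}\,ds$, and it remains to run an elementary one-dimensional Gaussian estimate in two cases. If $\abss{s_0}\le r$, then $[-r,r]$ contains a subinterval of length $\ge r$ with $s_0$ as an endpoint, and on the portion of that subinterval of length $\min\{r,n^{-1/2}\}$ adjacent to $s_0$ the integrand is at least $e^{-1/2}$ (since $\tfrac n2 t^2\le\tfrac12$ for $\abss t\le n^{-1/2}$); hence the denominator is at least $e^{-1/2}\min\{r,n^{-1/2}\}$, while the numerator is at most $2\rho\le\tfrac{1}{32}\min\{r,n^{-1/2}\}$, giving $q(\theta)\le\tfrac{1}{16}$. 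If $r<\abss{s_0}\le\sqrt2$, say $s_0\ge r$ (the sign $s_0\le-r$ is symmetric), then $s\mapsto e^{-\frac n2(s-s_0)^2}$ is increasing on $[-r,r]\supseteq[-\rho,\rho]$, so the denominator is at least $\int_{\rho}^{r}e^{-\frac n2(s-s_0)^2}\,ds\ge(r-\rho)\,e^{-\frac n2(\rho-s_0)^2}\ge\tfrac r2\,e^{-\frac n2(\rho-s_0)^2}$ while the numerator is at most $2\rho\,e^{-\frac n2(\rho-s_0)^2}$, so $q(\theta)\le 4\rho/r\le\tfrac{1}{16}$ using $\rho\le r/64$. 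Combining, $\Exs_\theta[q(\theta)]\le\tfrac{1}{16}\cdot1+1\cdot\tfrac{1}{16}=\tfrac18$.

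I expect the one genuinely delicate point to be the volumetric step of the second paragraph: it is essential that ``short chord'' is measured against the half-radius $r$ rather than $2r$ (so that $(2r)^n-b(\theta)^n$ stays above a fixed fraction of $(2r)^n$) and that the two rays $\pm\theta$ are accounted for separately. The remaining estimates are routine, as long as one checks that the constant $128$ in $\rho$ is chosen large enough to secure both $\rho\le r/64$ and $2\rho\le\tfrac{1}{32}\min\{r,n^{-1/2}\}$, which indeed it is.
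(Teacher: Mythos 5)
Your proposal is correct and follows essentially the same route as the paper: a polar decomposition of the kernel, a volumetric argument showing that $\lambda(u,2r)\ge \frac{63}{64}$ forces all but a $\frac{1}{16}$-fraction of directions to yield a chord containing $[-r,r]$, and a one-dimensional truncated-Gaussian estimate along each good chord. The only substantive variation is in the last step: the paper discards a further spherical cap of directions to guarantee that the Gaussian center satisfies $\sqrt{n}\,\abss{s_0}\le 4$ and then bounds a density ratio by $2$, whereas you keep all good directions and split on whether $\abss{s_0}\le r$, handling the far case by monotonicity of $s\mapsto e^{-\frac{n}{2}(s-s_0)^2}$ --- a clean alternative whose constants do check out.
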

Deferring the proof of Lemma~\ref{lem:F_u_lower_bound} to the end, we are ready to prove Lemma~\ref{lem:transition_overlap}.

\begin{proof}[Proof of Lemma~\ref{lem:transition_overlap}]
  For the sake of brevity, throughout the proof we omit the expression $\nu_{\beta, n}$ in our notation (abbreviating $P = P(\nu_{\beta,n})$ for example), since $\nu_{\beta, n}$ is the only target density considered in this proof. 
  It is sufficient to prove that for any $A \subset K$ nonempty measurable subset, we have
  \begin{align*}
    P_{u \to A} - P_{v \to A} < 1 - C \min\braces{\sqrt{n} F_u, 1}
  \end{align*}
  Since $P_{u \to A} \leq 1$, this is implied by showing that
  \begin{equation} \label{eq:ntsPv}
  P_{v \to A} \geq C \min\braces{\sqrt{n} F_u, 1}.
  \end{equation}
  
  We partition $A$ into four parts as follows
  \begin{itemize}
    \item $A_1$ is the part too close to $u$
    \begin{align*}
      A_1 \defn \braces{x \in A: \abss{x - u } < F_u}.
    \end{align*}
    \item $A_2$ is the part which is not almost orthogonal to $u-v$
    \begin{align*}
      A_2 \defn \braces{x \in A: \abss{(x-u)\tp (u-v)} > \frac{2}{\sqrt{n}} \abss{x - u} \cdot \abss{u-v}}.
    \end{align*}
    \item $A_3$ is the part for which the angle $\angle p_u(x) \beta u$ satisfies $\sin(\angle p_u(x) \beta u) > \frac{2}{\sqrt{n}}$, as illustrated in Figure~\ref{fig:uvXa_loc_explain}
    \begin{align*}
      A_3 \defn \braces{x \in A: \sin\parenth{\angle p_u(x) \beta u} > \frac{2}{\sqrt{n}}},
    \end{align*}
    where $p_u(x)$ is the projection of $\beta$ onto the line through $u$ and $x$. We omit the dependency on $x$ and use $p_u$ when the dependency on $x$ is clear. 
    \item $S \defn A \setminus \parenth{A_1 \cup A_2 \cup A_3}$ is the rest. 
  \end{itemize}
  The proof proceeds in two steps:
  \begin{enumerate}
    \item Show that $P_{u \to S} \geq \frac{1}{4}$.
    \item Show that there exists a constant $C'> 0$ such that $P_{v \to S} \geq C' \cdot P_{u \to S}$. According to the definition of $P_{u \to \cdot}$ in Eq.~\eqref{eq:hit-and-run_transition}, we need to 
    \begin{itemize}
      \item show that $\abss{x-v}$ can be upper bounded via $\abss{x - u}$,
      \item show that $\nu(\ell_{vx})$ can be upper bounded via $\nu(\ell_{ux})$.
    \end{itemize}
  \end{enumerate}
  Note that the two steps establish a lower bound on $P_{v \to A}$, which concludes the proof in light of \eqref{eq:ntsPv}.
  
  \paragraph{Step 1.} According to the definition~\eqref{eq:median_step_size} of $F_u$, we have 
  \begin{align*}
    P_{u \to A_1} \leq P_{u \to \ball_u(F_u)} = \frac{1}{8}.
  \end{align*}
  Given $u,v$, $P_{u \to A_2}$ only depends on the uniform distribution on the unit sphere. We evoke the following well-known result~\cite{tkocz2012upper,ball1997elementary} on the area upper bound of a spherical cap of angle $\phi \in (0, \frac{\pi}{2})$,
  \begin{align}
    \label{eq:sphere_cap_area}
    \frac{\area_n(\phi)}{ 2 \area_n(\pi/2)} \leq e^{-n \cos(\phi)^2 /2},
  \end{align}
  where $\area_n(\phi)$ denotes the area of the cap of angle $\phi$ of a unit $n$-sphere (unit sphere in $\real^n$). Applying Eq.~\eqref{eq:sphere_cap_area} with $\cos(\phi) = \frac{2}{\sqrt{n}}$, we have
  \begin{align*}
    P_{u \to A_2} \leq 0.3. 
  \end{align*}
  Similarly, given $u$, $P_{u \to A_3}$ only depends on the uniform distribution on the unit sphere and an application of Eq.~\eqref{eq:sphere_cap_area} implies
  \begin{align*}
    P_{u \to A_3} \leq 0.3. 
  \end{align*}
   Combining the three displays above, we obtain
  \begin{align*}
    P_{u \to S} \geq 1 - \frac{1}{8} - 0.3 - 0.3 \geq \frac{1}{4}. 
  \end{align*}
  \paragraph{Step 2.} First, for $x \in S$, we have 
  \begin{align}
    \label{eq:xu_lowerbound_by_uv}
    \abss{x - u} \overset{(i)}{\geq} F(u) \overset{(ii)}{\geq} \frac{\sqrt{n}}{2} \abss{u - v}.
  \end{align}
  (i) follows from $x \notin A_1$. (ii) follows from the assumption of the lemma. Second, we show that $\abss{x-v}$ can be upper bounded via $\abss{x-u}$ as follows
  \begin{align}
    \label{eq:xv_relate_xu}
    \abss{x-v}^2 &= \abss{x-u}^2 + \abss{u-v}^2 + 2 (x-u) \tp (u-v) \notag \\
    &\overset{(i)}{\leq} \abss{x-u}^2 + \abss{u-v}^2 + \frac{4}{\sqrt{n}} \abss{x-u}\abss{u-v} \notag \\
    &\overset{(ii)}{\leq}  \abss{x-u}^2 + \frac{4}{n} \abss{x-u}^2 + \frac{8}{n} \abss{x-u}^2 \notag \\
    &\leq \parenth{1+\frac{12}{n}} \abss{x-u}^2 \notag \\
    &\leq \parenth{\parenth{1+\frac{6}{n}} \abss{x-u}}^2.
  \end{align}
  (i) follows from $x \notin A_2$. (ii) follows from Eq.~\eqref{eq:xu_lowerbound_by_uv}. Third, let $q_u$ and $q_v$ be the unit vectors parallel to $u-x$ and $v-x$ respectively, we have
  \begin{align}
    \label{eq:nu_density_ratio}
    \frac{\nu(\ell_{vx})}{\nu(\ell_{ux})} &= \frac{\int_{p_v + t q_v \in K} \nu(p_v + t q_v)dt}{\int_{p_u + t q_u \in K} \nu(p_u + t q_u)dt } \notag \\
    &= \frac{\int_{p_v + t q_v \in K} e^{-\frac{n}{2}\abss{p_v-\beta}^2 - \frac{n}{2}t^2 }dt}{\int_{p_u + t q_u \in K} e^{-\frac{n}{2}\abss{p_u-\beta}^2 - \frac{n}{2}t^2 } dt } \notag \\
    &= \underbrace{\frac{e^{-\frac{n}{2}\abss{p_v-\beta}^2 }}{e^{-\frac{n}{2}\abss{p_u- \beta}^2 }}}_{Q_1(x)} \underbrace{\frac{\int_{p_v + t q_v \in K} e^{ - \frac{n}{2}t^2 }dt}{ \int_{p_u + t q_u \in K} e^{- \frac{n}{2}t^2 } dt }}_{Q_2(x)}.
  \end{align}
To derive an upper bound for the ratio $Q_2(x)$ we, roughly speaking, use the fact that the chord $\ell_{ux} \cap K$ contains an interval, centered at $p_u$, whose length is larger than $F_u$. From the definition of $A_3$, we have $\abss{p_u - u} = \sin(\angle p_u(x) \beta u) \abss{u-\beta}  \leq \frac{2}{\sqrt{n}} \abss{u - \beta} \leq \frac{2 \sqrt{2}}{\sqrt{n}}$. From the definition of $A_1$, we have $\abss{x-u} \geq F_u$. If $F_u \leq \frac{2}{\sqrt{n}}$, then $\frac{1}{\sqrt{2\pi/n}} \int_{p_u + t q_u \in K} e^{- \frac{n}{2}t^2 } dt \geq \sqrt{n} F_u \cdot \gamma(2\sqrt{2} + 2)$; otherwise, $\frac{1}{\sqrt{2\pi/n}} \int_{p_u + t q_u \in K} e^{- \frac{n}{2}t^2 } dt \geq \Gamma(2\sqrt{2} + 2) - \Gamma(2\sqrt{2})$.

Combining both cases, we conclude that $R_2$ obeys the bound
  \begin{align}
    \label{eq:R_2_bound}
    Q_2(x) = \frac{\frac{1}{\sqrt{2\pi/n}}\int_{p_v + t q_v \in K} e^{ - \frac{n}{2}t^2 }dt}{\frac{1}{\sqrt{2\pi/n}} \int_{p_u + t q_u \in K} e^{- \frac{n}{2}t^2 } dt } \leq \frac{1}{\min\braces{\sqrt{n} F_u \cdot \gamma(2\sqrt{2} + 2),  \Gamma(2\sqrt{2} + 2) - \Gamma(2\sqrt{2})}}.
  \end{align}
  To upper bound the first ratio $Q_1(x)$, it is sufficient to upper bound $ - \abss{p_v -\beta}^2 + \abss{p_u - \beta}^2$. Let $\delta_1\defn \angle{u\beta v}, \delta_2 \defn \angle{uxv}$. According to the definition of $A_1$, we have
  \begin{align}
    \label{eq:sin_delta2_bound}
    \sin(\delta_2) \leq \frac{\abss{u-v}}{\abss{u-x}} \leq \frac{2}{\sqrt{n}}.
  \end{align}
  From the assumption, we have \begin{equation} \label{eq:uminusv}
      \abss{u-v} \leq \frac{2}{\sqrt{n}},
  \end{equation} and $\abss{u-\beta} \geq \frac{\sqrt{2}}{2}$. Hence,
  \begin{align}
    \label{eq:sin_delta1_bound}
    \sin(\delta_1) \leq \frac{\abss{u-v}}{\abss{u-\beta}} \leq \frac{2\sqrt{2}}{n}.
  \end{align}
  \begin{figure}[ht]
    \centering
    \label{fig:uvXa_loc_explain}
    \includegraphics[width=0.9\textwidth]{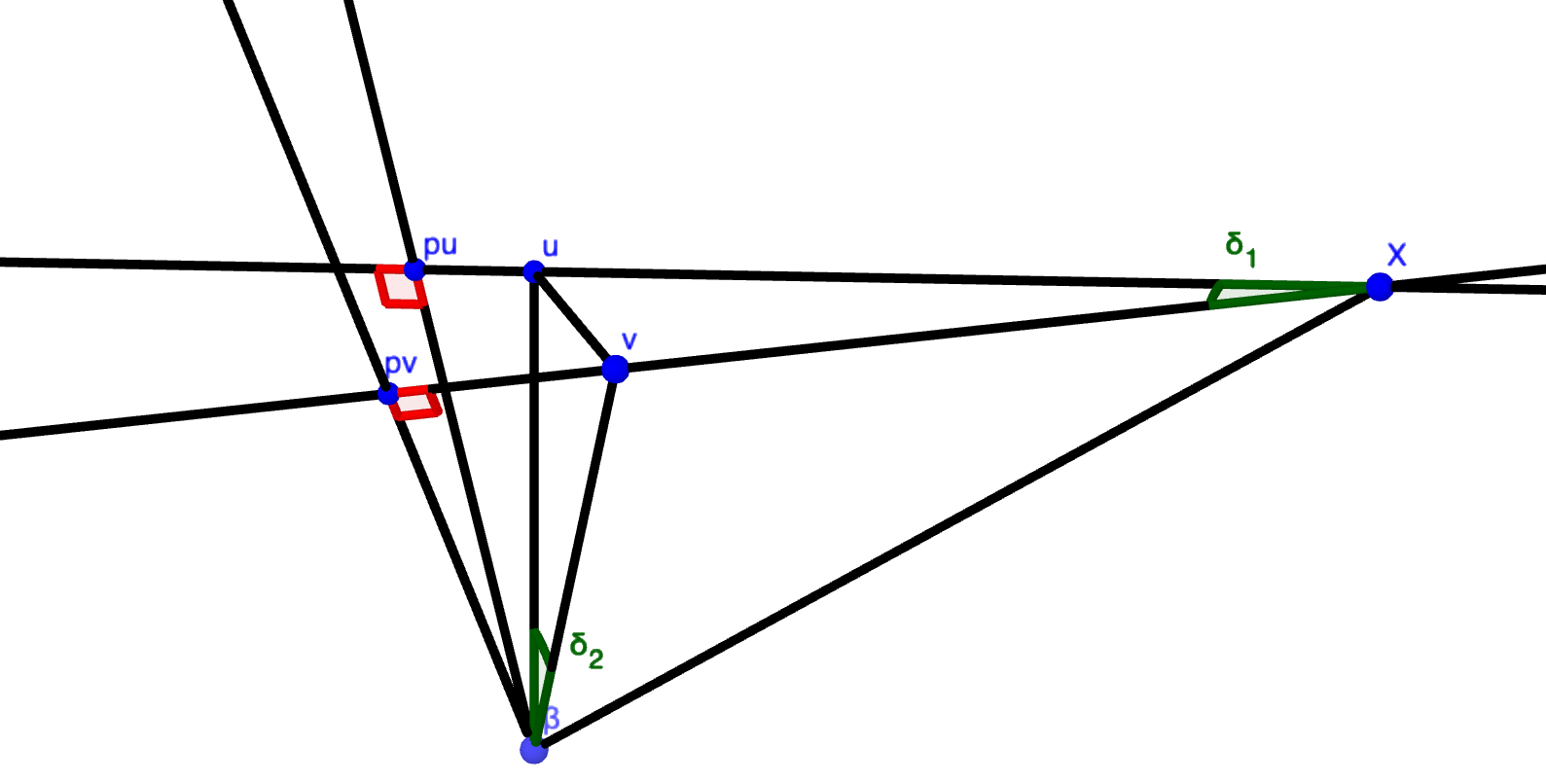}
    \caption{$u, v, \beta, x$ placed in a 3D plot}
  \end{figure}
  To lower-bound $\abss{p_v - \beta}$, we need to upper-bound $\abss{p_v - v}$ and hence the angle $\angle{v \beta p_v}$. Note that $\frac{\pi}{2} - \angle{v \beta p_v} = \angle{p_v v \beta} = \angle{v\beta x} + \angle{vx\beta}$. The two angles $\angle{v\beta x}$ and $\angle{vx\beta}$ can be bounded via $\angle{u\beta x}$ and $\angle{ux\beta}$ respectively. 
  
  Let $\Delta_1 = \angle{u\beta x} - \angle{v \beta x}$ and $\Delta_2 = \angle{ux \beta} - \angle{vx\beta}$. We claim that $|\Delta_i| \leq \delta_i$ for $i=1,2$. Indeed, if $w_1 = \frac{u-\beta}{|u-\beta|}$, $w_2 = \frac{v-\beta}{|v-\beta|}$ and $w_3 = \frac{x-\beta}{|x-\beta|}$ then by the spherical triangle inequality we have 
  $$
  |\arccos(\langle w_1, w_3 \rangle) - \arccos(\langle w_2, w_3 \rangle)| \leq \arccos(\langle w_1, w_2 \rangle),
  $$
  which amounts to the fact that $|\Delta_1| \leq \delta_1$, and an analogous argument shows that $|\Delta_2| \leq \delta_2$.
  We therefore have
  \begin{align*}
    \abss{p_v - v} &= \abss{v-\beta } \sin(\angle{p_v \beta v}) \\
    &= \abss{v-\beta} \sin(\frac{\pi}{2} - \angle{v\beta x} - \angle{vx\beta}) \\
    &= \abss{v-\beta} \sin(\frac{\pi}{2}-\angle{u\beta x} - \angle{ux\beta} + \Delta_1 + \Delta_2) \\
    &= \abss{v-\beta} \sin(\angle{p_u\beta u} + \Delta_1 + \Delta_2) \\
    &= \abss{v-\beta} \parenth{\sin(\angle{p_u \beta u})  + \sin(|\Delta_1| + \sin(|\Delta_2|)  } \\
    &\leq \abss{v-\beta} \parenth{\frac{\abss{p_u - u}}{\abss{u-\beta}} + \abss{\sin(\delta_1)} + \abss{\sin(\delta_2)} }.
  \end{align*}
  Plugging the bounds~\eqref{eq:sin_delta2_bound}~\eqref{eq:sin_delta1_bound} into the above equation, together with the definition of $A_3$, we obtain
  \begin{align*}
    \frac{\abss{p_v - v}}{\abss{v-\beta}} \leq \frac{\abss{p_u - u}}{\abss{u-\beta}} + \frac{2\sqrt{2} + 2}{\sqrt{n}} \leq \frac{10}{\sqrt{n}}.
  \end{align*}
  Together with the assumption $\abss{v-\beta} \leq \sqrt{2}$, we deduce that \begin{align} \label{eq:pvminusv}
      \abss{p_v - v} \leq \frac{10\sqrt{2}}{\sqrt{n}}.
  \end{align}
  Using the fact that $p_u$ and $p_v$ are orthogonal projections, we have
  \begin{align*}
    - \abss{p_v - \beta}^2 + \abss{p_u - \beta}^2 &= - \abss{v-\beta}^2 + \abss{p_v - v}^2 + \abss{u-\beta}^2 - \abss{p_u - u}^2 \\
    & \leq \abss{u-v} \parenth{2\abss{u-\beta} + \abss{u-v}} + \abss{p_v - v}^2 - \abss{p_u - u}^2 \\
    &\leq \abss{u-v} \parenth{2\abss{u-\beta} + \abss{u-v}} + \abss{p_v - v}^2  \\
    & \stackrel{ \eqref{eq:uminusv} \wedge \eqref{eq:pvminusv}}{\leq} \frac{4}{n} \parenth{2 \sqrt{2}  + \frac{4}{n}} + \frac{200}{n} \\
    &\leq \frac{230}{n},
  \end{align*}
  where the first inequality is obtained by the triangle inequality. We therefore have
  \begin{align}
    \label{eq:R_1_bound}
    Q_1(x) = \frac{e^{-\frac{n}{2}\abss{p_v-\beta}^2 }}{e^{-\frac{n}{2}\abss{p_u-\beta}^2 }} \leq e^{115}.
  \end{align}
  Combining the bounds for the two ratios, we obtain
  \begin{align*}
    P_{v \to S} &= \frac{2}{n \pi_n} \int_S \frac{f(x)}{\nu(\ell_{vx}) \abss{x-v}^{n-1}} dx \\
    &\overset{(i)}{\geq} \frac{2}{n \pi_n} \int_S \frac{f(x)}{Q_1(x) Q_2(x) \nu(\ell_{ux}) \abss{x-v}^{n-1}} dx \\
    &\overset{(ii)}{\geq} \frac{2}{e^6 \cdot n \pi_n} \int_S \frac{f(x)}{ Q_1(x) Q_2(x) \nu(\ell_{ux}) \abss{x-u}^{n-1}} dx \\
    & \stackrel{ \eqref{eq:R_2_bound} \wedge \eqref{eq:R_1_bound} }{ \geq } C \min\braces{\sqrt{n} F_u, 1} \cdot P_{u \to S},
  \end{align*}
  where (i) applies Eq.~\eqref{eq:nu_density_ratio}. (ii) follows from~\eqref{eq:xv_relate_xu} and $(1+\frac{6}{n})^n \leq e^6$. Here $C$ is a universal constant that depends on the universal constant that appear in Eq.~\eqref{eq:R_2_bound} and~\eqref{eq:R_1_bound}.

  Finally, we have
  \begin{align*}
    P_{u \to A} - P_{v \to A} &\leq 1 - P_{v \to S} \\
    & \leq 1 - C \min\braces{\sqrt{n} F_u, 1} \cdot P_{u \to S} \\
    & \leq 1 - \frac{1}{4} C \min\braces{\sqrt{n} F_u, 1}.
  \end{align*}
  We conclude. Remark that the constants were not optimized for the sake of the simplicity of the derivation. 
\end{proof}

\begin{proof}[Proof of Lemma~\ref{lem:F_u_lower_bound}]
  The lower bound proof proceeds similarly as that of Lemma 3.2 in~\cite{lovasz2006hit}, except that we have to deal with the Gaussian supported on the convex set $\nu_{\beta, n}$. Define $s: K \to \real^+$ as
  \begin{align}
    \label{eq:def_su}
    s(u) \defn \sup\braces{t \in \real_+  \middle\vert \lambda(u, t) \geq \frac{63}{64}},
  \end{align}
  where $\lambda(u, t)$ is defined in Eq.~\eqref{eq:def_fracInt}. By the above definition of $s(u)$ and since $u \in K_r$, we have $s(u) \geq 2r$. To simplify notation when the dependency on $u$ is clear, we simply write $s \defn s(u)$. Let $\eta$ denote the fraction of the surface of the ball $\ball(u, s/2)$ that is not in $K$. Then using the fact that $K$ is convex, we have
  \begin{align*}
    \vol\parenth{\ball\parenth{u,s} \setminus K } \geq \eta \cdot \vol( \ball\parenth{0, s}) - \vol\parenth{ \ball\parenth{0, s/2}}.
  \end{align*}
  On the other hand, the definition of $s$ implies that
  \begin{align*}
    \vol\parenth{\ball\parenth{u,s} \setminus K } \leq \frac{1}{64} \vol( \ball\parenth{0, s}).
  \end{align*}
  We deduce that for $n\geq 9$, 
  \begin{align*}
    \eta \leq \frac{1}{64} + 2^{-n} \leq \frac{3}{128}.
  \end{align*}
  Take a line $\ell$ through $u$ with direction uniformly distributed on the unit sphere. Then with probability at least $1-2\eta$, $\ell \cap \ball(u, s/2) \subseteq K$. 

  Now, let $p_u$ be the orthogonal projection of the point $\beta$ on the line $\ell$. Then $|p_u-u| = |u-\beta| \cos(\alpha)$ where $\alpha$ is the angle between $\ell$ and the line connecting $u$ and $\beta$. An application of the spherical cap area upper bound as in Eq.~\eqref{eq:sphere_cap_area} with $\cos(\alpha) = \frac{2\sqrt{2}}{\sqrt{n}}$ and a union bound implies that with probability at least $1- 2 \eta - \frac{1}{16}$, we have $\ell \cap \ball(u, s/2) \subseteq K$ and $\abss{p_u - u} \leq  \sqrt{2} \cdot \frac{2\sqrt{2}} {\sqrt{n}}  \leq \frac{4}{\sqrt{n}}$. Define $\tau \defn \min\braces{s, \frac{1}{8\sqrt{n}}}$.
  Then
  \begin{align*}
    \Prob_{Y \sim P_{u \to \cdot}} \parenth{ \abss{Y - u} \leq \frac{\tau}{128} \mid Y \in \ell} &\overset{(i)}{\leq} \frac{\Gamma\parenth{\sqrt{n} \parenth{b + \frac{\tau}{256}}} - \Gamma\parenth{\sqrt{n}\parenth{b - \frac{\tau}{256}}}}{ \Gamma\parenth{\sqrt{n}\parenth{b + \frac{s}{2}}} - \Gamma\parenth{\sqrt{n}\parenth{b - \frac{s}{2}}}} \\
    &\leq \frac{\Gamma\parenth{\sqrt{n} \parenth{b + \frac{\tau}{256}}} - \Gamma\parenth{\sqrt{n}\parenth{b - \frac{\tau}{256}}}}{ \Gamma\parenth{\sqrt{n}\parenth{b + \frac{\tau}{2}}} - \Gamma\parenth{\sqrt{n}\parenth{b - \frac{\tau}{2}}}} \\
    &\overset{\phantom{(ii)}}{\leq} \frac{\frac{\sqrt{n} \tau }{128}  \gamma(\sqrt{n} \parenth{b - \frac{\tau}{256}})}{ \sqrt{n} \tau  \gamma(\sqrt{n}(b + \frac{\tau}{2}))} \\
    &\overset{(ii)}{\leq} \frac{1}{64},
  \end{align*}
  where $b = \abss{p_u - u}$, $\Gamma$ is the cumulative density function  of the standard Gaussian and $\gamma$ is the density function of the standard Gaussian. (i) follows from looking at the one dimensional truncated Gaussian. (ii) follows because $\sqrt{n}b \leq 4$, $\tau \leq \frac{1}{8\sqrt{n}}$, and a numerical calculation shows the ratio $\frac{\gamma(\sqrt{n} \parenth{b - \frac{\tau}{256}})}{ \gamma(\sqrt{n}(b + \frac{\tau}{2}))} \leq 2$.
  For the unconditional probability, we have 
  \begin{align*}
    \Prob_{Y \sim P_{u \to \cdot}}\parenth{ \abss{Y - u} \leq \frac{\tau}{128}} \leq (2\eta + \frac{1}{16}) \cdot 1  + (1-2\eta - \frac{1}{16}) \cdot \frac{1}{64} < \frac{1}{8}.
  \end{align*}
  Hence,
  \begin{align*}
    F_u(\nu_{\beta, n}) \geq \frac{\tau}{128} \geq \frac{1}{128} \min\braces{2r, \frac{1}{8\sqrt{n}}}.
  \end{align*}

\end{proof}

\subsubsection{Proof of Lemma~\ref{lem:conductance_transformed_density}}
\label{sec:proof_of_lem:conductance_transformed_density}
\begin{proof}[Proof of Lemma~\ref{lem:conductance_transformed_density}]
  To simplify notation, let $\nu = \nu_{\beta, n}$. Consider the truncated density $\nu_{\dagger}$ 
  \begin{align*}
    \nu_{\dagger}(x) \defn \mathbf{1}_{K_r}(x) e^{-\frac{n}{2} \abss{x-\beta}^2 }   \frac{1}{\nu(K_r)}
  \end{align*}
  Note that since $K_r$ is convex, $\nu_\dagger$ is still $n$-strongly-logconcave. Consequently, it satisfies the isoperimetric inequality (see Theorem 5.4 in~\cite{cousins2018gaussian}), for $U_1, U_2, U_3$ a partition of $K$, 
  \begin{align*}
    \nu_\dagger(U_3) \geq \log2 \cdot \sqrt{n} \cdot d(U_1, U_2) \cdot \nu_\dagger(U_1) \nu_\dagger(U_2).
  \end{align*}
  \tikzset{every picture/.style={line width=0.75pt}} 
\begin{figure}[ht]
  \label{fig:conductance_lower_bound_partition}
\begin{tikzpicture}[x=0.75pt,y=0.75pt,yscale=-1,xscale=1]

\draw  [color={rgb, 255:red, 65; green, 117; blue, 5 }  ,draw opacity=1 ] (369,27) -- (503,229) -- (64,253) -- (83,128) -- (141,66) -- cycle ;
\draw  [color={rgb, 255:red, 208; green, 2; blue, 27 }  ,draw opacity=1 ] (147,90) .. controls (171,69) and (324,41) .. (365,64) .. controls (406,87) and (470,188) .. (441,214) .. controls (412,240) and (108,241) .. (88,211) .. controls (68,181) and (123,111) .. (147,90) -- cycle ;
\draw    (209,15) .. controls (250,43) and (187,168) .. (245,183) .. controls (303,198) and (298,247) .. (328,283) ;
\draw  [dash pattern={on 4.5pt off 4.5pt}]  (285,54) .. controls (305,94) and (263,153) .. (321,168) .. controls (379,183) and (348,193) .. (378,229) ;
\draw  [dash pattern={on 4.5pt off 4.5pt}]  (162,85) .. controls (182,125) and (128,188) .. (186,203) .. controls (244,218) and (221,200) .. (251,236) ;
\draw  [fill={rgb, 255:red, 245; green, 166; blue, 35 }  ,fill opacity=0.4 ] (383,215.5) -- (409,234.5) -- (64,253) -- (64,253) -- (117,211.5) -- cycle ;

\draw (178,15.4) node [anchor=north west][inner sep=0.75pt]    {$S_{1}$};
\draw (231,15.4) node [anchor=north west][inner sep=0.75pt]    {$S_{2}$};
\draw (346,116.4) node [anchor=north west][inner sep=0.75pt]    {$S'_{2}$};
\draw (118,160.4) node [anchor=north west][inner sep=0.75pt]    {$S'_{1}$};
\draw (479,211.4) node [anchor=north west][inner sep=0.75pt]  [color={rgb, 255:red, 65; green, 117; blue, 5 }  ,opacity=1 ]  {$K$};
\draw (427,191.4) node [anchor=north west][inner sep=0.75pt]  [color={rgb, 255:red, 208; green, 2; blue, 27 }  ,opacity=1 ]  {$K_{r}$};
\draw (101,227.4) node [anchor=north west][inner sep=0.75pt]  [color={rgb, 255:red, 255; green, 255; blue, 255 }  ,opacity=1 ]  {$\Upsilon ^{c}$};

\end{tikzpicture}
\centering
\caption{Illustration of the partition of $K$ in conductance lower bound}
\end{figure}
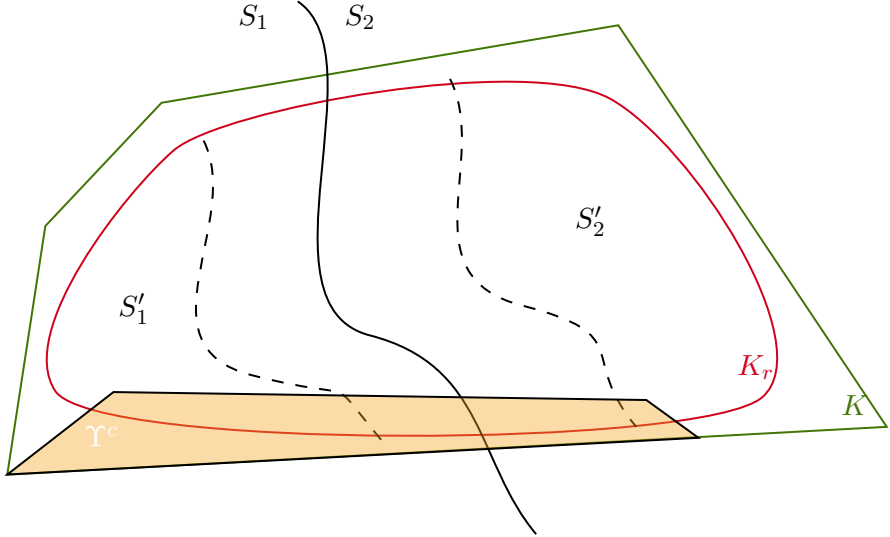
  
  Let $\varrho = \frac{r \sqrt{n}}{64 C}$ where $C$ is the constant from Lemma~\ref{lem:transition_overlap}.
  Define the sets
  \begin{align*}
    S_1' \defn \braces{u \in S_1 \cap K_r \mid P_{u \to S_2}(\nu) < \frac{\varrho}{2}}, \quad S_2' \defn \braces{u \in S_2 \cap K_r  \mid P_{u \to S_1}(\nu) < \frac{\varrho}{2}}.
  \end{align*}
  There are two cases:
  \begin{itemize}
    \item Case 1: $\nu(S_1') \leq \nu(S_1 \cap K_r)/2$ or $\nu(S_2') \leq \nu(S_2 \cap K_r)/2$.
    \item Case 2: $\nu(S_i') \geq \nu(S_i \cap K_r)/2$ for $i=1,2$.
  \end{itemize}
  \paragraph{Case 1:} Assuming that $\nu(S_1 \cap K_r \setminus S_1') \geq \nu(S_1 \cap K_r)/2 $, we obtain
  \begin{align*}
    \int_{S_1} P_{x \to S_2}(\nu) d\nu(x) &\geq \int_{S_1 \cap K_r \setminus S_1'} P_{x \to S_2}(\nu) d\nu(x) \\
    &\overset{(i)}{\geq} \frac{\varrho}{2} \nu(S_1 \cap K_r \setminus S_1') \\
    &\geq \frac{\varrho}{4} \nu(S_1 \cap K_r).
  \end{align*}
  (i) follows from the definition of $S_1'$. The proof is similar for the case where the roles of $S_1$ and $S_2$ are switched, and using the reversibility of the kernel $P$ which implies that $\int_{S_1} P_{x \to S_2}(\nu) d \nu(x) = \int_{S_2} P_{x \to S_1}(\nu) d \nu(x)$.
  \paragraph{Case 2:} For any $u \in S_1' \cap \Upsilon$ and $v \in S_2' \cap \Upsilon$, we have
  \begin{align*}
    \tvdis\parenth{P_{u \to \cdot}(\nu), P_{v \to \cdot}(\nu)} \geq P_{u \to S_1}(\nu) - P_{v \to S_1}(\nu) = 1 - P_{u \to S_2}(\nu) - P_{v \to S_1}(\nu) > 1 - \varrho. 
  \end{align*}
  Lemma~\ref{lem:F_u_lower_bound} implies that for $u \in S_1' \cap \Upsilon$ and $r \leq \frac{1}{16 \sqrt{n}}$, we have
  \begin{align}
    \label{eq:F_u_lower_bound_in_proof}
    F_u(\nu_{\beta, n}) \geq \frac{r}{64}.
  \end{align}
  Together with Lemma~\ref{lem:transition_overlap}, we have that $\abss{u-v} \geq \Delta \defn \frac{r}{32 \sqrt{n}}$. Since it holds for any pair of $u \in S_1' \cap \Upsilon$ and $v \in S_2' \cap \Upsilon$, its implies that 
  \begin{align*}
    d\parenth{S_1' \cap \Upsilon, S_2' \cap \Upsilon} \geq \Delta.
  \end{align*}
  We have
  \begin{align*}
    \int_{S_1} P_{x \to S_2}(\nu) d\nu(x) &= \frac{1}{2} \parenth{\int_{S_1} P_{x \to S_2} d\nu(x) + \int_{S_2} P_{x \to S_1} d\nu(x)} \\
    &\geq \frac{1}{2} \parenth{\int_{S_1 \cap K_r \setminus S_1'} P_{x \to S_2} d\nu(x) + \int_{S_2 \cap K_r \setminus S_2'} P_{x \to S_1} d\nu(x)}  \\
    &\overset{(i)}{\geq} \frac{\varrho}{4} \brackets{\nu(S_1 \cap K_r\setminus S_1') + \nu(S_2 \cap K_r\setminus S_2')} \\
    & = \frac{\varrho}{4} \nu(K_r \setminus (S_1' \cup S_2')).
  \end{align*}
  (i) follows from the definition of $S_1'$ and $S_2'$. Note that the three sets $S_1' \cap \Upsilon, S_2' \cap \Upsilon$ and $K_r \setminus ((S_1' \cup S_2') \cap \Upsilon)$ form a partition of $K_r$. We have
  \begin{align*}
    \nu(K_r \setminus (S_1' \cup S_2')) + \delta  &\geq  \nu(K_r \setminus ((S_1' \cup S_2') \cap \Upsilon)) \\
    &\overset{(i)}{\geq} \frac{\log(2) \cdot \sqrt{n} }{\nu(K_r)}  \cdot d(S_1' \cap \Upsilon, S_2' \cap \Upsilon) \cdot \nu(S_1' \cap \Upsilon) \nu(S_2' \cap \Upsilon) \\
    &\geq \frac{1}{2} \Delta \sqrt{n} \cdot \parenth{ \nu(S_1' \cap \Upsilon)} \nu(S_2' \cap \Upsilon) \\
    &\geq \frac{1}{2} \Delta \sqrt{n} \cdot \parenth{ \nu(S_1')-\delta} \nu(S_2' \cap \Upsilon) \\
    &\geq \frac{1}{2} \Delta \sqrt{n} \cdot \nu(S_1') \nu(S_2' \cap \Upsilon) - \frac12 \Delta \sqrt{n} \delta  \\
    &\geq \frac{1}{2} \Delta \sqrt{n} \cdot \nu(S_1') \nu(S_2') - \Delta \sqrt{n} \delta  \\
    &\overset{(ii)}{\geq} \frac{1}{8} \Delta \sqrt{n} \cdot \nu(S_1 \cap K_r) \cdot \nu(S_2 \cap K_r)  - \Delta \sqrt{n} \delta.
  \end{align*}
  (i) applies the isoperimetric inequality for $\nu_\dagger$. (ii) applies the condition of Case 2. Combine the above two displays, we conclude there exists a universal constant $C' > 0$ such that
  \begin{align*}
    \int_{S_1} P_{u \to S_2}(\nu) d\nu(x) \geq \frac{r^2 \sqrt{n} } { C' } \brackets{\nu(S_1 \cap K_r) \cdot \nu(S_2 \cap K_r)  - 8 \parenth{1 + \frac{32}{r} }\delta }.
  \end{align*}
\end{proof}

\bibliographystyle{alpha}
\bibliography{ref}

\appendix
\newpage
\section{Summary of properties of a logconcave distribution}
\label{app:properties_of_logconcave}
Here is a list of well-known properties of logconcave distributions.  
\begin{itemize}
  \item The log-concavity of a measure is preserved by affine transformations and by marginalization, see Proposition 3.1 and Theorem 3.3 in~\cite{saumard2014log}.
  \item The strong log-concavity of a measure is preserved by affine transformations, by convolution (Theorem 3.7 in~\cite{saumard2014log}) and by marginalization (Theorem 3.8 in~\cite{saumard2014log}).
  \item The isoperimetric constant of a 1-dimensional isotropic logconcave density is lower bounded by $\log(2)/2 \approx 0.34$ (see Theorem 4.3 in~\cite{vempala2005geometric}). 
  \item The maximal value of a 1-dimensional isotropic logconcave density $p$ is bounded by $1$ (Lemma 5.5 (a) in~\cite{lovasz2007geometry}). It is restated in Lemma~\ref{lem:isotropic_1d_max}.
  \item For an isotropic logconcave density $p$, $p(0) \geq 1/8$ (Lemma 5.5 (b) in~\cite{lovasz2007geometry}). It is restated in Lemma~\ref{lem:isotropic_1d_zero}.
  \item An isotropic logconcave density has an exponential tail (Lemma 5.17 in~\cite{lovasz2007geometry}). It is restated in Lemma~\ref{lem:isotropic_logconcave_tail}.
  \item Let $a, b$ be the $\delta$ and $(1-\delta)$-quantile of $p$, then $p(a) \geq \frac{1}{8e} \delta$ and $p(b) \geq \frac{1}{8e} \delta$  as shown in Lemma~\ref{lem:isotropic_1d_delta_quantile_large}.
  \item $\abss{p'(a)} < \frac{2}{\delta}$ as shown in Lemma~\ref{lem:isotropic_1d_delta_quantile_derivative_bounded}.
\end{itemize}

\begin{lemma}[Lemma 5.5 (a) in~\cite{lovasz2007geometry}]
  \label{lem:isotropic_1d_max}
  Let $p$ be an isotropic logconcave density on $\real$. Then for any $x \in \real$, 
  \begin{align*}
    p(x) \leq 1.
  \end{align*}
\end{lemma}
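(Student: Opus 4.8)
The plan is to prove the scale-invariant sharpening: \emph{every logconcave density $f$ on $\mathbb{R}$ satisfies $(\sup f)^2\,\mathrm{Var}(f)\le 1$}. Applied to an isotropic logconcave $p$ (so $\mathrm{Var}(p)=1$) this gives $\sup p\le 1$, hence $p(x)\le 1$ for all $x$. To set up, note that $M:=\sup f$ is attained, say at a mode $x_0$, since $-\log f$ is convex and tends to $+\infty$ at the boundary of $\mathrm{supp}(f)$. Let $F$ be the CDF, $Q:=F^{-1}:(0,1)\to\mathrm{supp}(f)$ the quantile function ($Q$ is $C^1$ with $Q'=1/(f\circ Q)$, since a logconcave density is continuous and strictly positive on the interior of its support), and put $u_*:=F(x_0)\in[0,1]$ and $r(u):=f(Q(u))=1/Q'(u)$. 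The crucial observation is that $r$ is concave on $(0,1)$: differentiating, $r'(u)=f'(Q(u))\,Q'(u)=(\log f)'(Q(u))$, and $(\log f)'$ is non-increasing by logconcavity while $Q$ is increasing, so $r'$ is non-increasing. Moreover $0\le r\le M$ with $r(u_*)=f(x_0)=M=\max r$.

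Concavity of $r$ together with $r\ge 0$ yields the chord bounds $r(u)\ge \tfrac{u}{u_*}M$ for $u\in(0,u_*]$ and $r(u)\ge\tfrac{1-u}{1-u_*}M$ for $u\in[u_*,1)$. I would then introduce the comparison density
\[
f^*(x):=e^{x/u_*}\mathbf{1}_{\{x\le 0\}}+e^{-x/(1-u_*)}\mathbf{1}_{\{x\ge 0\}},
\]
an asymmetric two-sided exponential; one checks it is a logconcave probability density with $\sup f^*=1$ attained at $0$, and its quantile function $g:=(F^*)^{-1}$ satisfies $g(u_*)=0$, $g(u)=u_*\log(u/u_*)$ on $(0,u_*]$, $g(u)=(1-u_*)\log\tfrac{1-u_*}{1-u}$ on $[u_*,1)$, with $1/g'$ equal to the concave function $u/u_*$, resp.\ $(1-u)/(1-u_*)$, on the two pieces. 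Thus the chord bounds say exactly $r(u)\ge M\,(1/g'(u))$, i.e.\ $Q'(u)\le\tfrac1M g'(u)$ pointwise on $(0,1)$, and integrating, $Q(v)-Q(u)\le\tfrac1M\bigl(g(v)-g(u)\bigr)$ for all $0<u<v<1$.

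Coupling through a uniform variable finishes it: if $U,U'$ are i.i.d.\ uniform on $(0,1)$ then $Q(U)$ has density $f$ and $|Q(U)-Q(U')|\le\tfrac1M|g(U)-g(U')|$ almost surely, so
\[
\mathrm{Var}(f)=\tfrac12\,\mathbb{E}\bigl[(Q(U)-Q(U'))^2\bigr]\le\tfrac1{2M^2}\,\mathbb{E}\bigl[(g(U)-g(U'))^2\bigr]=\tfrac1{M^2}\,\mathrm{Var}(f^*).
\]
Evaluating $\mathrm{Var}(f^*)$ is an elementary integral: with the substitutions above and $\int_0^1\log t\,dt=-1$, $\int_0^1(\log t)^2\,dt=2$, one gets $\mathbb{E}[g(U)]=(1-u_*)^2-u_*^2=1-2u_*$ and $\mathbb{E}[g(U)^2]=2u_*^3+2(1-u_*)^3$, hence $\mathrm{Var}(f^*)=2u_*^3+2(1-u_*)^3-(1-2u_*)^2=2u_*^2-2u_*+1\le 1$. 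Combining the last two displays gives $M^2\,\mathrm{Var}(f)\le 1$, as required.

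The step I expect to carry the weight is the concavity of $r=f\circ F^{-1}$ together with correctly identifying the extremal density $f^*$; once these are in place the rest is a one-line chord estimate, a coupling, and the integral above. The only technicalities to tidy up are the differentiability of $f$ (handled by using the everywhere-defined, non-increasing right-derivative of $\log f$, or by approximating $f$ by smooth logconcave densities and passing to the limit) and the degenerate cases $u_*\in\{0,1\}$, where $f^*$ collapses to a one-sided exponential with $\mathrm{Var}(f^*)=1$ — precisely the equality case, which also shows that the constant $1$ in the lemma is sharp.
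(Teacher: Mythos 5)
Your proof is correct, but note that the paper does not actually prove this lemma: it is quoted verbatim from Lov\'asz--Vempala (Lemma 5.5(a) in \cite{lovasz2007geometry}) and the paper simply points to that reference. What you supply is a self-contained proof of the sharper, scale-invariant statement $(\sup f)^2\,\mathrm{Var}(f)\le 1$, and every step checks out: the concavity of $r=f\circ F^{-1}$ is the standard equivalent formulation of one-dimensional logconcavity; the chord bounds $r(u)\ge \tfrac{u}{u_*}M$ and $r(u)\ge\tfrac{1-u}{1-u_*}M$ follow from concavity plus $r\ge 0$ by letting the second interpolation endpoint tend to $0$ or $1$; the identification of the extremal two-sided exponential $f^*$, the monotone-coupling inequality $Q(v)-Q(u)\le\tfrac1M(g(v)-g(u))$, the identity $\mathrm{Var}=\tfrac12\,\Exs[(X-X')^2]$, and the computation $\mathrm{Var}(f^*)=2u_*^2-2u_*+1\le1$ are all verified. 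The two technicalities you flag are genuinely the only ones, and both are harmless: the supremum of $f$ need not be attained in the interior (e.g.\ the one-sided exponential), but then $r$ is monotone and one takes $u_*\in\{0,1\}$, which is exactly your equality case; and the differentiability of $f$ can be dispensed with either by smoothing or by running the concavity argument with one-sided derivatives of $\log f$. Compared with the argument in \cite{lovasz2007geometry}, which also compares against an exponential but proceeds more directly from the density, your route through the quantile reparametrization is arguably cleaner and makes the extremizer and the sharpness of the constant transparent.
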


\begin{lemma}[Lemma 5.5 (b) in~\cite{lovasz2007geometry}]
  \label{lem:isotropic_1d_zero}
  Let $p$ be an isotropic logconcave density on $\real$, then $p(0) \geq \frac{1}{8}$.
\end{lemma}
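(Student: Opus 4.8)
The plan is to rephrase the bound in terms of the convex function $h := -\log p$, to invoke the pointwise estimate $p \le 1$ already proved in Lemma~\ref{lem:isotropic_1d_max}, and to combine this with a second-moment (Chebyshev) estimate and the unimodality of log-concave densities. As a preliminary I would record the following: since $\int p = 1$, $\int x\,p = 0$ and $\int x^2\,p = 1$, Markov's inequality applied to $x^2$ gives $\int_{\{|x|\ge 2\}} p \le 1/4$, hence $\int_{-2}^{2} p \ge 3/4$; because $[-2,2]$ has length $4$ this forces $\sup_x p(x) \ge \sup_{[-2,2]} p \ge 3/16$, so in particular the mode value satisfies $p(m) \ge 3/16$.

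Suppose, for contradiction, that $p(0) < 1/8$, i.e. $h(0) > \log 8$. By Lemma~\ref{lem:isotropic_1d_max} we have $h \ge 0$ everywhere, and $h$ is convex. If $0$ were a global minimizer of $h$ (equivalently $p(0) = \sup p$), then $p \le p(0) < 1/8$ everywhere, whence $\int_{-2}^{2} p < 4\cdot\tfrac18 = \tfrac12$, contradicting the preliminary estimate. Hence $h$ is strictly monotone at $0$; reflecting through the origin if necessary — an operation that preserves isotropy, log-concavity and the value $p(0)$ — we may assume $h$ is decreasing at $0$, so that the mode satisfies $m > 0$ and $p$ is non-decreasing on $(-\infty,m]$.

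The contradiction will come from the condition $\int x\,p = 0$. On $(-\infty,0]$ the density is non-decreasing and bounded above by $p(0)$, so among all admissible left halves (nonnegative, pointwise $\le p(0)$, total mass $\le 1$) the quantity $\int_{-\infty}^{0}|x|\,p(x)\,dx$ is maximized by $p \equiv p(0)$ on an interval $[-1/p(0),\,0]$; this gives $\int_{-\infty}^{0}|x|\,p \le \tfrac{1}{2p(0)}$, and since the mean is $0$, also $\int_{0}^{\infty} x\,p(x)\,dx \le \tfrac{1}{2p(0)}$. On the other hand, convexity of $h$ on $[0,m]$ means $p$ grows at least exponentially there, from $p(0)$ up to $p(m)\ge 3/16$, which lower-bounds the mass and the first moment that $[0,m]$ contributes; combined with the tail $[m,\infty)$ (which contributes further nonnegative first moment) and with the fact, again from $\int x^2 p = 1$, that the positive axis must carry a definite amount of second moment, one forces $\int_0^\infty x\,p$ to exceed $\tfrac{1}{2p(0)}$ as soon as $p(0) < 1/8$. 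The generous constants ($2$, $3/16$, $1/8$) mean only crude estimates are needed at each step.

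The main obstacle is exactly this last step: producing an absolute lower bound on $\int_0^\infty x\,p(x)\,dx$ that beats $\tfrac{1}{2p(0)}$ for small $p(0)$, uniformly in the location $m$ of the mode and the shape of the right tail. The two regimes to control are $m$ very small — where $[0,m]$ carries little first moment but then $\int x^2 p = 1$ forces a long, slowly decaying right tail that does — and $m$ moderately large — where the increasing piece on $[0,m]$ already suffices; unifying them in one clean estimate is the real content. An alternative route around this difficulty is to bound $|\mathrm{median}(p)|$ by an absolute constant (a standard consequence of isotropy together with the exponential-tail bound of Lemma~\ref{lem:isotropic_logconcave_tail}) and then to close using the monotonicity of $p$ on $(-\infty,m]$ directly.
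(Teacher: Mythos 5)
Your skeleton (split on whether the mode is at $0$, reflect so the mode lies in $[0,\infty)$, use monotonicity to get $p\le p(0)$ on $(-\infty,0]$, then contradict a moment constraint) is indeed the right shape and is essentially the shape of Lov\'asz--Vempala's argument, but the moment you chose and the direction of your inequalities do not close, and in fact cannot close. First, the extremizer is backwards: among nonnegative functions on $(-\infty,0]$ bounded by $p(0)$ with total mass $q$, the block $p(0)\mathbf{1}_{[-q/p(0),\,0]}$ \emph{minimizes} $\int_{-\infty}^{0}|x|\,p\,dx$ (mass packed as close to $0$ as possible), giving the \emph{lower} bound $\int_{-\infty}^{0}|x|\,p\,dx\ge q^2/(2p(0))$; the upper bound $\int_{-\infty}^{0}|x|\,p\,dx\le 1/(2p(0))$ you wrote is not what the block gives and, without the monotonicity and logconcavity constraints you did not invoke at that point, no finite upper bound holds at all. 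Second, and more fundamentally, the contradiction you reach for in the final step is unattainable: Cauchy--Schwarz gives $\int_{0}^{\infty} x\,p\,dx\le \bigl(\int x^2 p\bigr)^{1/2}\bigl(\int p\bigr)^{1/2}=1$, whereas $1/(2p(0))>4$ whenever $p(0)<1/8$, so no amount of work on the right half can produce $\int_{0}^{\infty}x\,p\,dx>1/(2p(0))$. Even after fixing the direction and importing the Gr\"unbaum-type bound $q:=\int_{-\infty}^{0}p\,dx\ge 1/e$ (Lemma 5.4 of Lov\'asz--Vempala, which your write-up does not use but which is the crucial external input), the first-moment route only yields $q^2/(2p(0))\le\int_{0}^{\infty}x\,p\,dx\le 1$, hence $p(0)\ge q^2/2\ge 1/(2e^2)\approx 0.068$, falling short of $1/8$.

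The fix is to trade the first moment for the second. Under the same constraints the block also minimizes $\int_{-\infty}^{0}x^2 p\,dx$, giving $\int_{-\infty}^{0}x^2 p\,dx\ge q^3/\bigl(3p(0)^2\bigr)$; combining with $\int_{-\infty}^{0}x^2 p\,dx\le\int_{\RR}x^2 p\,dx=1$ and $q\ge 1/e$ yields $p(0)\ge q^{3/2}/\sqrt{3}\ge 1/\sqrt{3e^3}\approx 0.129>1/8$. This second-moment-plus-Gr\"unbaum calculation is the substance of the cited Lov\'asz--Vempala proof. Your preliminary observations (that $\sup p\ge 3/16$, and that the mode-at-$0$ case contradicts this) are correct and can stay, but they feed into a first-moment endgame that, as you yourself suspected, is ``the real content'' and cannot produce the stated constant; you need the variance, not the mean.
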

See Lemma 5.5 in~\cite{lovasz2007geometry} for a proof of the above two lemmas. 

\begin{lemma}
  \label{lem:isotropic_logconcave_tail}
  For any isotropic logconcave density $p$ in $\real^n$, and any $t > 0$, we have 
  \begin{align*}
    \Prob_{X \sim p}(\abss{X} \geq t \sqrt{n}) \leq e^{-t+1}.
  \end{align*}
\end{lemma}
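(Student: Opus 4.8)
This is a classical estimate — it is essentially Lemma~5.17 of~\cite{lovasz2007geometry} — and the simplest option is to invoke that reference directly. For completeness, here is how I would prove it from scratch.

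The plan is to reduce to one dimension via the localization lemma of Lov\'asz and Simonovits~\cite{lovasz1993random}. Since $\mu$ is isotropic, $\Exs_{X\sim\mu}\abss{X}^2=n$, so the claim is equivalent to the following assertion for every finite logconcave measure $\mu$ on $\RR^n$ with $\int \abss{x}^2\,d\mu(x)\le n\,\mu(\RR^n)$ and every $t>0$:
\begin{align*}
  \int \mathbf{1}_{\{\abss{x}\ge t\sqrt n\}}\,d\mu(x)\ \le\ e^{1-t}\,\mu(\RR^n).
\end{align*}
For $t\le 1$ the right-hand side is at least $\mu(\RR^n)$, so assume $t>1$. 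By localization it suffices to verify this when $\mu$ is a logconcave measure supported on a line segment $\{p+su:\ s\in J\}$ with $\abss u=1$ (the needle densities produced by the lemma, of the form $\ell(s)^{n-1}$ with $\ell\ge 0$ affine, are logconcave). On such a segment $\abss{p+su}^2=(s-s^\ast)^2+d^2$ with $s^\ast=-\langle p,u\rangle$ and $d^2=\abss p^2-\langle p,u\rangle^2\ge 0$; setting $\sigma=s-s^\ast$ and normalizing to a probability density $q$, we are reduced to a one-dimensional logconcave probability density $q$ on $\RR$ with $\Exs_q[\sigma^2]\le n-d^2=:m^2$ (so in particular $d^2\le n$), for which we must show $\Prob_q\!\big(\abss{\sigma}\ge\sqrt{t^2n-d^2}\big)\le e^{1-t}$ (when $t^2n<d^2$ the event is everything but then $t<1$, so this case is vacuous).

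This follows from the one-dimensional statement that a logconcave probability density $q$ on $\RR$ with $\Exs_q[\sigma^2]\le m^2$ satisfies $\Prob_q(\abss\sigma\ge\mu)\le e^{1-\mu/m}$ for all $\mu>0$: applying it with $\mu=\sqrt{t^2n-d^2}$ gives a bound of $\exp\!\big(1-\sqrt{t^2n-d^2}/\sqrt{n-d^2}\big)\le e^{1-t}$, since $\sqrt{t^2n-d^2}\ge t\sqrt{n-d^2}$ whenever $t\ge 1$ (equivalently $t^2d^2\ge d^2$). To prove the one-dimensional statement one again works in one dimension: the tail $\mu\mapsto\Prob_q(\sigma\ge\mu)$ of a logconcave density is itself logconcave (Pr\'ekopa's theorem applied to the jointly logconcave function $(\mu,\sigma)\mapsto q(\sigma)\mathbf{1}_{\{\sigma-\mu\ge 0\}}$), and among one-dimensional logconcave densities with a prescribed second moment, the tail probability $\Prob_q(\abss\sigma\ge\mu)$ is extremized by a log-affine (truncated exponential) density, for which the inequality is an elementary computation; this last reduction is itself a one-dimensional localization / bathtub argument. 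The main obstacle is precisely this endgame: logconcavity couples the two tails of $q$, so bounding the right and left tails separately via the above tail-logconcavity loses a universal constant and does not yield the sharp factor $e^{1-t}$ — the one-dimensional extremality argument is needed (or one may simply cite~\cite[Lemma~5.17]{lovasz2007geometry}, which carries it out).
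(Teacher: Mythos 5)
The paper gives no argument for this lemma beyond the citation ``See Lemma 5.17 in \cite{lovasz2007geometry} for a proof,'' which is precisely the reference you identify, so your primary recommendation coincides exactly with the paper's treatment. Your supplementary localization sketch (reduction to exponential needles plus the sharp one-dimensional tail bound) is a standard and essentially sound route to the stated constant, but it is not needed here.
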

See Lemma 5.17 in~\cite{lovasz2007geometry} for a proof.

\begin{lemma}
  \label{lem:isotropic_1d_delta_quantile_large}
  Let $p$ be an isotropic logconcave density on $\real$ and let $z$ be the $(1-\delta)$-quantile (that is, $\int_{-\infty}^z p(x) dz  = 1-\delta$), with $0 < \delta \leq 1/e$, then $p(z) \geq \frac{1}{8e} \delta$. Similarly, let $w$ be the $\delta$-quantile, then $p(w) \geq \frac{1}{8e} \delta$. Additionally, for any $\tilde{z} \in [w, z]$, we have $p(\tilde{z}) \geq \frac{1}{8e} \delta$.
\end{lemma}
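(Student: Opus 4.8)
The plan is to prove the single statement that $p(z)\ge \frac{1}{8e}\delta$ for the $(1-\delta)$-quantile $z$; the claim for the $\delta$-quantile $w$ then follows by applying this to the reflected density $x\mapsto p(-x)$ (which is again isotropic and logconcave, has $(1-\delta)$-quantile $-w$, and value $p(w)$ there), and the claim for an intermediate point $\tilde z\in[w,z]$ follows immediately from logconcavity: writing $\tilde z=\lambda w+(1-\lambda)z$ with $\lambda\in[0,1]$ we get $p(\tilde z)\ge p(w)^\lambda p(z)^{1-\lambda}\ge\min\{p(w),p(z)\}\ge\frac{1}{8e}\delta$.

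To control $p(z)$, I would pass to the survival function $T(x):=\Prob_{X\sim p}(X\ge x)=\int_x^\infty p(t)\,dt$. The key structural input is that $T$ is logconcave: this is a consequence of Prékopa's theorem, since $T(x)=\int_{\RR}\mathbf{1}_{\{t\ge x\}}p(t)\,dt$ is a marginal of the function $(x,t)\mapsto\mathbf{1}_{\{t\ge x\}}p(t)$, which is logconcave on $\RR^2$ (a product of the indicator of a half-plane and a logconcave function). Let $h:=\log T$, which is concave with $h'=-p/T$, and let $a$ denote the median, so $T(a)=\tfrac12$ and $T(z)=\delta$; note $a<z$ since $\delta\le 1/e<\tfrac12$. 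Concavity of $h$ on $[a,z]$ (the slope of the chord dominates the endpoint derivative) gives
\[
\frac{p(z)}{\delta}=-h'(z)\;\ge\;\frac{h(a)-h(z)}{z-a}=\frac{\log(1/\delta)-\log 2}{z-a}.
\]

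It then remains to bound $z-a$ from above by something of order $\log(1/\delta)$. Put $L:=\log(1/\delta)\ge 1$ (using $\delta\le 1/e$). Chebyshev's inequality together with $\Var(p)=1$ forces $|a|\le\sqrt2$, and when $z>0$ the exponential tail bound (Lemma~\ref{lem:isotropic_logconcave_tail} with $n=1$) gives $\delta=\Prob(X\ge z)\le e^{1-z}$, hence $z\le 1+L$; when $z\le 0$ this upper bound on $z$ is trivial. In all cases $z-a\le L+3$, so $p(z)\ge \delta\cdot\frac{L-\log 2}{L+3}$. Finally $L\mapsto\frac{L-\log 2}{L+3}$ is increasing on $L\ge 1$, so it is minimized at $L=1$ where it equals $\frac{1-\log 2}{4}$, and $\frac{1-\log 2}{4}\ge\frac{1}{8e}$ because $8e(1-\log 2)>8\cdot 2.7\cdot 0.3>4$. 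This gives $p(z)\ge\frac{1}{8e}\delta$ and closes the argument.

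The only steps I expect to require care are the two structural facts feeding the chord estimate — logconcavity of the survival function $T$, and the strict inequality $a<z$ that makes the slope bound legitimate — both of which are routine but should be stated explicitly; the rest is elementary arithmetic with the tail and moment bounds already available in the paper.
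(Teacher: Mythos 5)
Your proof is correct, but it takes a genuinely different route from the one in the paper. You pass to the survival function $T(x)=\int_x^\infty p$, use Pr\'ekopa's theorem to establish that $\log T$ is concave, and derive a lower bound on $p(z)=-\delta\,(\log T)'(z)$ from the chord estimate between the median and $z$. The paper instead works directly with the concavity of $\log p$: it assumes for contradiction that $p(z)<\tfrac{p(0)}{e}\delta$, applies the mean value theorem on $[0,z]$ to find a point where $(\log p)'\le -1$, propagates this slope to the right by concavity, integrates twice to get $\delta\le p(z)$, and derives a contradiction; it then converts $p(0)$ into a constant via the bound $p(0)\ge 1/8$. Your approach is direct (no contradiction) and bypasses the lower bound on $p(0)$ entirely, but it invokes the slightly heavier structural fact that the survival function of a logconcave density is logconcave; the paper's approach stays strictly with the density and its logconcavity and needs only tail and mode estimates from~\cite{lovasz2007geometry}. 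Both arrive at the same constant; in fact your chain yields the slightly sharper intermediate bound $p(z)\ge\delta\,\tfrac{1-\log 2}{4}$ before relaxing to $\tfrac{\delta}{8e}$. The reduction of the $\delta$-quantile case by reflection and the interpolation step for $\tilde z$ via logconcavity of $p$ match the paper's treatment.
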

\begin{proof}[Proof of Lemma~\ref{lem:isotropic_1d_delta_quantile_large}]
  According to Lemma 5.4 in~\cite{lovasz2007geometry}, we have
  \begin{align*}
    z \geq 0.
  \end{align*}
  Based on the tail of the logconcave density in Lemma 5.7 in~\cite{lovasz2007geometry}, if $z \geq \log(e/\delta)$, then $z > 1$ and 
  \begin{align*}
    \delta < e^{-z + 1} \leq \delta,
  \end{align*}
  which leads to a contradiction. Hence, $z < \log(e/\delta)$.

  Suppose $p(z) < \frac{p(0)}{e} \delta < p(0)$.
  Applying the mean value theorem, there exists $y \in [0, z]$, such that 
  \begin{align*}
    (\log p)'(y) = \frac{\log p (z) - \log p(0)}{z} \leq -1.
  \end{align*}
  The derivative is non-increasing, as $p$ is logconcave. Thus, for $x \geq z$, we have
  \begin{align*}
    (\log p)'(x) \leq -1.
  \end{align*}
  Integrating from $z$ to $x$, we obtain
  \begin{align*}
    p(x) \leq p(z) e^{- (x - z)}.
  \end{align*}
  Integrating again from $z$ to $\infty$, we obtain
  \begin{align*}
    \delta = \int_{z}^\infty p(x) dx \leq p(z) < \frac{p(0)}{e} \delta \leq \frac{1}{e} \delta,
  \end{align*}
  which is a contradiction according to Lemma~\ref{lem:isotropic_1d_max}. 
  Hence,
  \begin{align*}
    p(z) \geq \frac{p(0)}{e} \delta \geq \frac{1}{8e} \delta,
  \end{align*}
  where the last step follows from Lemma~\ref{lem:isotropic_1d_zero}.

  The proof of the $\delta$-quantile is similar. Additionally, since $(\log p)'(z) \leq -1$, $(\log p)'(w) \geq 1$ and the derivative is non-increasing, we conclude that for any $\tilde{z} \in [w, z]$, we have $p(\tilde{z}) \geq \frac{1}{8e} \delta$.

\end{proof}

\begin{lemma}
  \label{lem:isotropic_1d_delta_quantile_derivative_bounded}
  Let $p$ be an isotropic logconcave density on $\real$ and let $z$ be the $(1-\delta)$-quantile. That is, $\int_{-\infty}^z p(x) dz  = 1-\delta$. Suppose $0 < \delta \leq 1/e$, then there exists a universal constant $c > 0$ such that $p'(z) \geq - \frac{2}{\delta}$. By symmetry, a similar result holds for the $\delta$-quantile. 
\end{lemma}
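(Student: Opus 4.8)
The plan is to use the logconcave representation $p = e^{-V}$ with $V\colon\real\to\real\cup\{+\infty\}$ convex. Wherever $p$ is differentiable this gives $p'(z) = -V'(z)\,p(z)$, so the task reduces to an upper bound on $V'(z)$. If $V'(z)\le 0$ then $p'(z)\ge 0\ge -2/\delta$ and there is nothing to prove, so I would assume $V'(z)>0$ from now on.

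The crux is a one-line tail estimate. Convexity of $V$ gives, for every $x\ge z$, $V(x)\ge V(z)+V'(z)(x-z)$, hence $p(x)\le p(z)\,e^{-V'(z)(x-z)}$. Integrating over $[z,\infty)$ and using that $z$ is the $(1-\delta)$-quantile,
\begin{align*}
  \delta \;=\; \int_z^\infty p(x)\,dx \;\le\; p(z)\int_z^\infty e^{-V'(z)(x-z)}\,dx \;=\; \frac{p(z)}{V'(z)},
\end{align*}
so that $V'(z)\le p(z)/\delta$. Combining this with $p'(z)=-V'(z)\,p(z)$ and the upper bound $p(z)\le 1$ from Lemma~\ref{lem:isotropic_1d_max} yields
\begin{align*}
  -\,p'(z) \;=\; V'(z)\,p(z) \;\le\; \frac{p(z)^2}{\delta} \;\le\; \frac{1}{\delta} \;\le\; \frac{2}{\delta},
\end{align*}
which is the claimed inequality (in fact with a factor of $2$ to spare). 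The corresponding statement for the $\delta$-quantile follows by applying the result to the isotropic logconcave density $x\mapsto p(-x)$.

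The only genuine point requiring care — and the step I would flag as the main obstacle, mild though it is — is regularity: a general logconcave density need not be $C^1$ at $z$, so $p'(z)$ must be interpreted correctly. I would dispatch this exactly as in the proof of Lemma~\ref{lem:interval_overlap_two_close_logconcave}: convolve $p$ with a centered Gaussian of small variance (which preserves logconcavity), rescale to restore the isotropic normalization, prove the bound in the smooth case, and pass to the limit. Alternatively one replaces $V'(z)$ throughout by the right derivative $D^+V(z)$, which always exists for convex $V$ and is a subgradient at $z$, so that the supporting-line inequality $V(x)\ge V(z)+D^+V(z)(x-z)$ used above still holds for all $x\ge z$. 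I also note that $z$ lies in the interior of the support of $p$ since $0<\delta\le 1/e<1$, so $p(z)>0$ and — although it is not needed for this direction — the lower bound $p(z)\ge\frac{1}{8e}\delta$ from Lemma~\ref{lem:isotropic_1d_delta_quantile_large} is available; the hypothesis $\delta\le 1/e$ itself is used only for consistency with the companion lemmas and plays no role in the argument above.
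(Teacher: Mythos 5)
Your argument is correct and is essentially the paper's proof run forwards instead of by contradiction: both use that $(\log p)'$ is non-increasing to get the exponential tail bound $p(x)\le p(z)e^{-V'(z)(x-z)}$ for $x\ge z$, integrate to compare with $\delta$, and invoke $p(z)\le 1$ from Lemma~\ref{lem:isotropic_1d_max}. Your direct version even yields the slightly sharper bound $-p'(z)\le 1/\delta$, and your remarks on regularity (smoothing or one-sided derivatives of the convex potential) address a point the paper's proof passes over silently.
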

\begin{proof}[Proof of Lemma~\ref{lem:isotropic_1d_delta_quantile_derivative_bounded}]
Suppose $p'(z) < -\frac{2}{\delta}$. Then because $0 \leq p(z) \leq 1$, 
\begin{align*}
  (\log p)' (z) = \frac{p'(z)}{p(z)} < - \frac{2}{\delta}. 
\end{align*}
The derivative is non-increasing as $p$ is logconcave. Thus, for $x \geq z$, we have 
\begin{align*}
  (\log p)'(x) < -\frac{2}{\delta}.
\end{align*}
Integrating twice, we obtain
\begin{align*}
  \delta = \int_z^{\infty} p(x) dx \leq \int_z^{\infty} e^{-\frac{2}{\delta}(x-z)}dx \leq \frac{\delta}{2},
\end{align*}
which is a contradiction. 
  
\end{proof}

\end{document}